\documentclass[11pt, reqno ]{amsart}
\usepackage{geometry}                
\usepackage{graphicx}
\usepackage{amssymb}
\usepackage{epstopdf}
\usepackage{amsmath}
\usepackage{mathrsfs}
\usepackage{tikz-cd}
\usepackage[all]{xy}
\usepackage{color}
\usepackage{tikz}
\usepackage{xcolor}
\usepackage{mathtools}
  \usepackage{enumitem}
  \usepackage[all]{xy}
\usepackage{color}
\usepackage{tikz}
  \usepackage{array}
  \usepackage{amsthm}

\definecolor{red}{RGB}{200,26,26} 
\definecolor{blue}{RGB}{0,0,255}  
\definecolor{green}{RGB}{28,133,26} 
\definecolor{mauve}{RGB}{100,10,100} 
  
\usepackage[colorlinks=true, pdfstartview=FitV,linkcolor=blue,citecolor=blue,urlcolor=blue]{hyperref}

    \advance \topmargin by -\headheight
    \advance \topmargin by -\headsep

    \evensidemargin \oddsidemargin
\numberwithin{equation}{section}

\def\cA{{\mathcal A}}

\def\cC{{\mathcal C}}

\def\gg{{\mathfrak g}}

\DeclareMathOperator{\Res}{Res}

\newfont{\german}{eufm10}

\begin{document}

\newtheorem{maintheorem}{Main Theorem}
\newtheorem{thm}[subsection]{Theorem}
 \newtheorem{theorem}{Theorem}[section]
  \newtheorem{prop}[theorem]{Proposition}
  \newtheorem{cor}[theorem]{Corollary}
  \newtheorem{lemma}[theorem]{Lemma}
  \newtheorem{defn}[theorem]{Definition}
  \newtheorem{ex}[theorem]{Example}
    \newtheorem{conj}[theorem]{Conjecture}
     \newtheorem{remark}[theorem]{Remark}
          \newtheorem{example}[theorem]{Example}

 \newcommand{\cx}{{\bf C}}
\newcommand{\la}{\langle}
\newcommand{\ra}{\rangle}
\newcommand{\res}{{\rm Res}}
\newcommand{\expp}{{\rm exp}}
\newcommand{\lgp}{\widehat{\rm G}( { F} ((t)) )}
\newcommand{\svee}{\scriptsize \vee}
\newcommand{\deff}{\stackrel{\rm def}{=}}
\newcommand{\cal}{\mathcal}

\newcommand{\on}{\operatorname}
\newcommand{\mc}{\mathcal}

\title{Feigin-Semikhatov duality at the critical level}

\author{Thomas Creutzig $^{1}$}

\email{thomas.creutzig@fau.de}

\author{Xuanzhong Dai $^{2}$}
\email{xuanzhong.dai@oist.jp}

\author{Bailin Song $^{3}$}
\email{bailinso@ustc.edu.cn}

\address{$^1$ Department of Mathematics, FAU Erlangen-Nürnberg, Erlangen, Germany}
\address{$^2$ Okinawa Institute of Science and Technology, Okinawa, Japan 904-0495}

\address
{$^3$ School of Mathematical Sciences, University of Science and Technology of China, Hefei, Anhui 230026, P. R. CHINA}

\begin{abstract}
The Feigin-Semikhatov duality asserts that the Heisenberg cosets of the subregular $W$-algebra of $\mathfrak{sl}_n$ at level $k$ and the one of the principal $W$-superalgebra of $\mathfrak{sl}_{n|1}$ at level $\ell$ coincide when the levels satisfy the Feigin-Frenkel relation $(k+n)(\ell+n-1)=1$. 
A similar duality holds between the subregular $W$-algebra of $\mathfrak{so}_{2n+1}$ and the principal $W$-superalgebra of $\mathfrak{osp}_{2|2n}$.

We study these dualities in the critical/large level limit. 

We describe the centerless subregular $W$-algebra at the critical level as an orbifold of the large level limit of the principal $W$-superalgebra times a lattice VOA. 
Our construction yields a functor between certain categories of the two involved vertex algebras. We show that in this set-up one in fact gets block-wise equivalences of categories. Studying the principal block of the large level limit of the principal $W$-superalgebra then gives us the structure of the principal blocks of the subregular $W$-algebras in the category of weight modules (which is much larger than the more common category of lower bounded modules).
\end{abstract}

\maketitle

\section{Introduction}

Let $G$ be a connected complex simple algebraic group whose Lie algebra is $\mathfrak g$,
we normalize the Killing form in the usual way so that long roots have norm two. To any complex number $k$ one can then construct the affine vertex algebra $V^k(\mathfrak g)$ of $\mathfrak g$ at level $k$. Let $f$ be a nilpotent element in $\mathfrak g$, then one can further get another vertex algebra via quantum Hamiltonian reduction, $W^k(\mathfrak g, f)$, the $W$-algebra of $\mathfrak g$ associated to $f$ at level $k$. It only depends on the nilpotent orbit of $f$. 
Let $h^\vee$ be the dual Coxeter number of $\mathfrak g$. It then turns out that the center of $V^k(\mathfrak g)$ is trivial unless $k$ is at the critical level $-h^\vee$. In this case the center coincides with $W^{-h^\vee}(\mathfrak g, f_{\text{prin}})$, the principal $W$-algebra at the critical level. 
Principal $W$-algebras satisfy Feigin-Frenkel duality \cite{FF2}, that is
\[
W^k(\mathfrak{g}, f_{\text{prin}}) \cong W^\ell({}^L\mathfrak{g}, f_{\text{prin}})
\]
for 
\[
r (k+h^\vee)(\ell + {}^Lh^\vee) =1
\]
and ${}^L\mathfrak{g}$ the Langlands dual of $\mathfrak g$, that is the Lie algebra whose root system is the coroot system of $\mathfrak g$. Here $r$ is the lacing number of $\mathfrak g$. At the critical level the dual level goes to infinity and this correspondence becomes the statement that the center $W^{-h^\vee}(\mathfrak{g}, f_{\text{prin}})$ is isomorphic to Fun(Op$_{{}^LG}$), the algebra of functions on the moduli space of ${}^LG$-opers on the formal disc, see Theorem 4.3.1 of \cite{Frenkel-book}. This Theorem is called Langlands correspondence for loop groups and the identification of $V^{-h^\vee}(\mathfrak g)$-mod with the category of quasi-coherent sheaves on Op$_{{}^LG}$ is a major aspect of this Langlands correspondence. 
It should be viewed as a loop group analogue of the Beilinson-Bernstein Theorem saying that taking global sections gives an equivalence between $D$-modules on the flag variety of $G$ and the block of $\mathfrak g$ with trivial central character \cite{BB}.

The duality of Feigin and Frenkel has an analogue for many nilpotent elements in type A \cite{CL1, CKLSS} 
and types B, C, D \cite{CL2}. However the role of the dual Lie algebra is then played by a Lie superalgebra. The type A case of \cite{CL1} is as follows. 
Let $n, m  \in \mathbb Z_{>0}$ and $\mathfrak{g} = \mathfrak{sl}_{n+m}$. Nilpotent orbits in $\mathfrak{sl}_{n+m}$ are parameterized by partitions of the integer $n+m$ and we set $f_{n, m}$ the nilpotent element whose orbit corresponds to the partition $(n, 1, \dots, 1)$. The $W$-algebra $W^k(\mathfrak{sl}_{n+m}, f_{n, m})$ has $V^{k+n-1}(\mathfrak{gl}_{m})$ as subalgebra. 
On the dual side we consider $\mathfrak{g} = \mathfrak{sl}_{n+m|m}$ and the nilpotent element $f_{n+m|m}$ that corresponds to the pair of partitions $(n+m|1, \dots, 1)$ of the pair of integers $(n+m|m)$.
The $W$-algebra $W^\ell(\mathfrak{sl}_{n+m|m}, f_{n+m|m})$ has $V^{-(k+n+m-1)}(\mathfrak{gl}_{m})$ as subalgebra and the analogue of Feigin-Frenkel duality says that for generic levels
\[
\text{Com}\left(V^{k+n-1}(\mathfrak{gl}_{m}),  W^k(\mathfrak{sl}_{n+m}, f_{n, m})\right) \cong
\text{Com}\left(V^{-(\ell+n+m-1)}(\mathfrak{gl}_{m}), W^\ell(\mathfrak{sl}_{n+m|m}, f_{n+m|m})\right) 
\]
where the levels satisfy the Feigin-Frenkel type duality relation
\[
(k+n+m)(\ell +n) = 1. 
\]
 Note that the dual Coxeter numbers of $\mathfrak{sl}_{n+m}$ and $\mathfrak{sl}_{n+m|m}$ are $n+m$ and $n$ respectively. This isomorphism is true as one-parameter families of vertex algebras, i.e. it holds for generic levels. 
This isomorphism had appeared in the physics literature in the case $n=m=1$ and there it firmates under the name Kazama-Suzuki duality \cite{KS}. 
This duality alone is already significant, e.g. it was key to prove that the category of weight modules of the affine vertex algebra of $\mathfrak{sl}_2$ at any admissible level has a vertex tensor category structure \cite{C}.
For general $n$, but still $m=1$ this has been a conjecture by Feigin and Semikhatov \cite{FS} and it had been first proven in \cite{CGN}.
 
In type $B$ there is an analogous Feigin-Frenkel type duality. Consider $\mathfrak g = \mathfrak{so}_{2m+2n+1}$ and set $f_{n, m}$ the nilpotent element whose orbit corresponds to the partition $(2n+1,1, \dots, 1)$ of $2m+2n+1$. Then $W^k(\mathfrak{so}_{2m+2n+1}, f_{n, m})$ has $V^{k+2n}(\mathfrak{so}_{2m})$ as subalgebra. On the dual side consider $\mathfrak g = \mathfrak{osp}_{2m|2(n+m)}$ and the nilpotent element $f_{n+m|m}$ corresponding to the partition $(1, \dots, 1|2m+2n)$ of $(2m|2m+2n)$. Then $W^\ell( \mathfrak{osp}_{2m|2(n+m)}, f_{n+m|m})$ has $V^{-2(\ell+m+n)-1}(\mathfrak{so}_{2m})$ as subalgebra and the Feigin-Frenkel type duality \cite{CL2} is
\[
\text{Com}\left(V^{k+2n}(\mathfrak{so}_{2m}),  W^k(\mathfrak{so}_{2(m+n)+1}, f_{n,m})\right) \cong
\text{Com}\left(V^{-2(\ell+ m+n)-1}(\mathfrak{so}_{2m}), W^\ell( \mathfrak{osp}_{2m|2(n+m)}, f_{n+m|m})\right) 
\]
where the levels satisfy the Feigin-Frenkel type duality relation
\[
2(k+2n+2m-1)(\ell +n+1) = 1. 
\]

These are thus not  isomorphisms of $W$-algebras, but of their coset subalgebras. Nonetheless there are convolution operations between these $W$-algebras that provide quite useful functors between representation categories \cite{CLNS}. 
The convolution kernels are certain shifted versions of the vertex algebra of chiral differential operators. Their existences has been conjectured in \cite{CG} and these algebras have been named quantum geometric Langlands kernel VOAs. This conjecture has been proven in few low  rank cases in \cite{CG, CGL} and then in generality for types A, D by Yuto Moriwaki \cite{Mo}. For $\mathfrak g = \mathfrak{gl}_m$ it has the following form
\begin{equation*}
    \begin{split}
\mathfrak A^n[\mathfrak{gl}_m, k] &= \bigoplus_{\lambda \in P^+} V^k_\lambda \otimes V^\ell_{\lambda^*} \otimes V_{\sqrt{nm}\mathbb Z + \frac{n\omega(\lambda)}{\sqrt{nm}}} \otimes \pi \\
\mathfrak A^n[\mathfrak{so}_{2m}, k] &= \bigoplus_{\lambda \in R^+} V^k_\lambda \otimes V^\ell_{\lambda^*}..
    \end{split}
\end{equation*}
Here $P^+$ is the set of dominant weights of $\mathfrak{sl}_m$ and $R^+$ is the set of dominant weights of $\mathfrak{so}_{2m}$ that are non-spin (they lie in the lattice $Q \cup (Q+\omega_1)$ with $Q$ the root lattice of $\mathfrak{so}_{2m}$ and $\omega_1$ the first fundamental weight).
$V^k_\lambda$ is the Weyl module of $\widehat{\mathfrak{sl}}_m$ of highest-weight $\lambda$ at level $k$. $\lambda^*$ is the weight dual to $\lambda$, $\pi$ is a simple rank one Heisenberg vertex algebra and the map $\omega: P \rightarrow \mathbb Z/m\mathbb Z$ satisfies $\omega(\lambda) = i$ if $\lambda = \omega_i \mod Q$ with $\omega_i$ the $i$-the fundamental weight, the convention that $\omega_0 = 0$ and $Q$ the root lattice of 
$\widehat{\mathfrak{sl}}_m$. Finally the upper index $n$ tells us that the level $\ell$ satisfies
\[
\frac{1}{k+h^\vee} + \frac{1}{\ell+h^\vee} = n
\]
with $h^\vee = m$, resp. $2m-2$ the dual Coxeter number of $\mathfrak{sl}_m$, resp $\mathfrak{so}_{2m}$. 
In the case of $m=1$,  this is simply $\mathfrak A^n[\mathfrak{gl}_1, k] \cong V_{\sqrt{n}\mathbb Z} \otimes \pi \cong \mathfrak A^n[\mathfrak{so}_{2}, k]$.
Set $\tilde k = -k - n -2m +1$ and $\tilde \ell = \ell + n -m -1$ in the case of type $A$ and $\tilde k = -k - 2n -4m +2$ and $\tilde \ell = 2(\ell + n  -m +1) +1$ in the case of type $B$. Then the convolution of $W$-algebra statement for generic levels and for our cases is \cite{CLNS}
\begin{equation}\nonumber
\begin{split}
    H^{\text{rel}}(\mathfrak{gl}_m, \mathfrak A^1[\mathfrak{gl}_m, \tilde k] \otimes W^k(\mathfrak{sl}_{n+m}, f_{n, m})) &\cong W^\ell(\mathfrak{sl}_{n+m|m}, f_{n+m|m}) \\
     H^{\text{rel}}(\mathfrak{gl}_m, \mathfrak A^{-1}[\mathfrak{gl}_m, \tilde \ell] \otimes W^\ell(\mathfrak{sl}_{n+m|m}, f_{n+m|m})) &\cong W^k(\mathfrak{sl}_{n+m}, f_{n, m})\\
     H^{\text{rel}}(\mathfrak{so}_{2m}, \mathfrak A^1[\mathfrak{so}_{2m}, \tilde k] \otimes W^k(\mathfrak{so}_{2(n+m)+1}, f_{n, m})) &\cong W^\ell(\mathfrak{osp}_{2m|2(n+m)}, f_{n+m|m}) \\
     H^{\text{rel}}(\mathfrak{so}_{2m}, \mathfrak A^{-1}[\mathfrak{so}_{2m}, \tilde \ell] \otimes W^\ell(\mathfrak{osp}_{2m|2(n+m)}, f_{n+m|m})) &\cong W^k(\mathfrak{so}_{2(n+m)}, f_{n, m}).
\end{split}   
\end{equation}
Here $H^{\text{rel}}$ denotes the relative semi-infinite Lie algebra cohomology of \cite{FGZ} that we will discuss in section \ref{sec:BRST} in the instance of $\mathfrak g = \mathfrak{gl}_1$. For more details on this see \cite{CLNS}.
Replacing the $W$-algebras by their modules in the cohomology leads to a functor between representation categories. In particular in the case of $\mathfrak g = \mathfrak{gl}_1$ it yields block-wise equivalences of categories and the functor also respects the associativity of intertwining operators, see \cite{CGNS}.

The aim of this work is to generalize the Feigin-Semikhatov duality to the critical level and hence the dual level goes to $\infty$. We now present the general philosophy. 
The first idea is that one can take the large level limit of $\mathfrak A^n[\mathfrak{g}, k]$ in such a way that the limit admits a large center and that the quotient is a simple vertex algebra, that decomposes as
\begin{equation*}
    \begin{split}
        \mathrm{FT}[\mathfrak{gl}_m, n] &= \bigoplus_{\lambda \in P^+} \rho_\lambda \otimes V^{-m + \frac{1}{n}}_{\lambda^*} \otimes V_{{\sqrt{nm}\mathbb Z + \frac{n\omega(\lambda)}{\sqrt{mn}}}} \\
        \mathrm{FT}[\mathfrak{so}_{2m}, n] &= \bigoplus_{\lambda \in R^+} \rho_\lambda \otimes V^{-2m +2 + \frac{1}{n}}_{\lambda^*} .
    \end{split}
\end{equation*}
Here $\rho_\lambda$ is the integrable representation of highest-weight $\lambda$ of $\mathfrak{sl}_m$ resp $\mathfrak{so}_{2m}$ and FT stands for (affine) Feigin-Tipunin (algebra) as they first suggested principal $W$-algebra analogues of these algebras \cite{FT}. These types of algebras have obtained recent attention \cite{Su, C2, CDGG, FL1, FL2, FL3}. 
For $m=1$ the affine Feigin Tipunin algebra is just a lattice VOA, $\mathrm{FT}[\mathfrak{gl}_1, n] \cong V_{\sqrt{n}\mathbb Z} \cong \mathrm{FT}[\mathfrak{so}_{m}, n]$. For $n=-1$ it should be closely related to the chiral Hecke algebra \cite{FB}, however the latter is to our knowledge not known to be simple. Finally for $\mathfrak{sl}_2$ and $n$ positive these algebras have been constructed \cite{CNS} and then later also in \cite{ACGY}.
In particular $\mathrm{FT}[\mathfrak{gl}_2, 1] \cong L_1(\mathfrak{psl}_{2|2})$.
For the construction and prove of simplicity of $\mathrm{FT}[\mathfrak{gl}_2, -1]$ the work \cite{AM} is probably quite useful. 

Let $(\mathfrak a, \mathfrak{g}_1, \mathfrak{g}_2, G) \in \{ (\mathfrak{gl}_m, \mathfrak{sl}_{n+m}, \mathfrak{sl}_{n+m|m}, GL(m)), (\mathfrak{so}_{2m}, \mathfrak{so}_{2(n+m)}, \mathfrak{osp}_{2m|2(n+m)}, SO(2m) \}$ and let $h^\vee_1, h^\vee_2$ be the dual Coxeter numbers of $\mathfrak{g}_1, \mathfrak{g}_2$. 
We then conjecture that at the critical level/large level limit 
the dualities take the form
\begin{align}
(W_\infty(\mathfrak{g}_{1}, f_{n,m})
 \otimes \mathrm{FT}[\mathfrak{a},1])^{G}
&\cong
W_{-h^\vee_2}(\mathfrak{g}_{2}, f_{n+m|m}),
\\
H^{\mathrm{rel}}\!\left(
\mathfrak{a},\,
\mathrm{FT}[\mathfrak{a},-1]
\otimes
W_{-h^\vee_2}(\mathfrak{g}_{2}, f_{n+m|m})
\right)
&\cong
W_\infty(\mathfrak{g}_{1}, f_{n,m}),
\\
(W_\infty(\mathfrak{g}_{2}, f_{n+m|m})
 \otimes \mathrm{FT}[\mathfrak{a},-1])^{G}
&\cong
W_{-h^\vee_1}(\mathfrak{g}_{1}, f_{n,m}),
\label{eq:conjdual1}
\\
H^{\mathrm{rel}}\!\left(
\mathfrak{a},\,
\mathrm{FT}[\mathfrak{a},1]
\otimes
W_{-h^\vee_1}(\mathfrak{g}_{1}, f_{n,m})
\right)
&\cong
W_\infty(\mathfrak{g}_{2}, f_{n+m|m}).
\label{eq:conjdual2}
\end{align}
where $W_{-h^\vee_1}(\mathfrak{g}_{1}, f_{n, m}), W_{-h^\vee_2}(\mathfrak{g}_{2}, f_{n+m|m})$ denote the centerless critial level $W$-algebras.  Moreover $W_\infty(\mathfrak{g}_{1}, f_{n, m}), W_\infty(\mathfrak{g}_{2}, f_{n+m|m}) $ are certain large level limits, see Section \ref{sec:limits} for those relevant for us. These algebras are (generalized) free field algebras, see \cite{CL1, CL2} and our Definition \ref{defn:freefield}. 
The representation theory of the free field algebras is non semi-simple and in particular the principal block can be well studied. It is natural to expect that this duality gives in particular interesting information about the abelian category of the critical level $W$-algebras. 

\begin{remark} \label{rmk:2limits}
There are different choices on taking the large level limit, Section \ref{sec:limits}. If $\gg$ is not a Lie algebra but only a Lie superalgebra then it seems as there are also at least two choices of defining the critical level $W$-superalgebra. Firstly as the quantum Hamiltonian reduction of the affine vertex superalgebra and secondly by viewing the W-superalgebra as a one-parameter family of vertex superalgebras (with the parameter being the level) obtained from the one-parameter family of affine vertex superalgebras via quantum Hamiltonian reduction and then choosing a specialization to the critical level. These two procedures need not to coincide and for example for the $W$-superalgebra of $\mathfrak{sl}_{2|1}$ the first procedure leads to a vertex superalgebra without Virasoro field and with a center, while the maybe most natural specialization in the second case leads to a simple vertex operator superalgebra. 

The conjecture should work for either case as long as one takes the appropriate large level limit on the dual side. 
Drazen Adamovic, Shigenori Nakatsuka and Boris Feigin already related the centers of critical level principal $W$-superalgebras of $\mathfrak{gl}_{n|1}$ to commutative algebras associated to large level limits of the dual side \cite{ AFN, AN}. 
\end{remark}

\subsection{Feigin-Semikhatov duality at the critical level}

We focus on the subregular case, $m=1$. In this case the following statements are known in type $A$: the OPE algebra is known \cite{GK}; the critical level subregular $W$-algebra is  realized as a coset inside $n^2$ $\beta\gamma$-VOAs \cite{CGL1, CGL2}, in particular using Zhu's algebra (which is a Smith algebra) simple lower-bounded modules have been classified in \cite{CGL2}.

One of our main results is to show the duality  in the critical level /large level limit (\ref{eq:conjdual1}) and (\ref{eq:conjdual2}) for $m=1$. 
\begin{theorem} \label{thm:KScritical}
Let $(\mathfrak g_1,\mathfrak g_2)=(\mathfrak {sl}_{n+1},\mathfrak {sl}_{n+1|1})$ or $(\mathfrak{so}_{2n+1},\mathfrak {osp}_{2|2n})$, and let $h_1^\vee$ be the dual Coxeter number of $\mathfrak g_1$ and $f_{\mathrm{sub}}$  be subregular nilpotent element of $\mathfrak g_1$. 
The Kazama-Suzuki type isomorphisms hold for the large-critical level duality:
\begin{align*} 
(W_{\infty}(\mathfrak g_2)\otimes V_{\sqrt{-1}\mathbb Z} )^{\mathbb C^*} & \cong W_{-h_1^\vee}(\mathfrak g_1,f_{\mathrm{sub}}),\\
H^{\mathrm{rel},0}\!\left( \mathfrak{gl}_1,
  W_{-h_1^\vee}(\mathfrak g_1, f_{\mathrm{sub}})\otimes V_{\mathbb Z}
\right)
&\cong  W_{\infty}(\mathfrak g_2).
\end{align*}
\end{theorem}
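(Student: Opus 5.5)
Since the critical level is not generic, the generic-level Feigin–Semikhatov duality cannot simply be specialized; both isomorphisms have to be obtained by a limiting procedure on explicit free field realizations. The plan is to take the limit on both sides of a realization that is already known to witness the duality, and then identify the limits.

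\textbf{Step 1: realize the subregular algebra for a one-parameter family of levels.} For generic $k$ we use the free field realization of $W^k(\mathfrak g_1,f_{\mathrm{sub}})$ through the Kazama–Suzuki coset of \cite{CGN, CL2}:
\[
W^k(\mathfrak g_1,f_{\mathrm{sub}})\cong \bigl(W^\ell(\mathfrak g_2)\otimes V_{\sqrt{-1}\mathbb Z}\bigr)^{\mathbb C^*}
\]
(or its Heisenberg-coset form). The levels are related by the Feigin–Frenkel relation. We rescale the generators of $W^\ell(\mathfrak g_2)$ by appropriate powers of $(\ell+h_2^\vee)$, as in Section~\ref{sec:limits}, so that the limit $\ell\to\infty$ exists and equals the free field algebra $W_\infty(\mathfrak g_2)$ of Definition~\ref{defn:freefield}. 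The $\mathbb C^*$-action is given by the zero mode of the diagonal Heisenberg field, and it survives the limit.

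\textbf{Step 2: identify the invariants with the centerless critical algebra.} As $k\to -h_1^\vee$, the strong generators of $W^k(\mathfrak g_1,f_{\mathrm{sub}})$ are the Heisenberg field $J$, the fields $G^\pm$ and the $W$-fields of weights $2,\dots,n$. Under the realization these become explicit normally ordered polynomials in the fields of $W_\infty(\mathfrak g_2)\otimes V_{\sqrt{-1}\mathbb Z}$. The central fields, i.e.\ the ones generating the Feigin–Frenkel type center at the critical level, go to zero or to scalars in the limit. Hence the map descends to an injective homomorphism
\[
W_{-h_1^\vee}(\mathfrak g_1,f_{\mathrm{sub}})\to \bigl(W_\infty(\mathfrak g_2)\otimes V_{\sqrt{-1}\mathbb Z}\bigr)^{\mathbb C^*}.
\]
Injectivity follows from simplicity of the centerless quotient, or from a character comparison. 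Surjectivity is the main obstacle. We would prove it by a character argument: compute the graded character of the orbifold by extracting the charge-zero part of a product of free field characters times a lattice theta function. We then compare it with the character of the centerless quotient, which is the PBW character on $J,G^\pm$ and the remaining nonzero generators. The orbifold character can also be bounded via a classical invariant theory (arc space) statement for $\mathbb C^*$ acting on the free field algebra.

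\textbf{Step 3: the BRST statement.} Using the relative semi-infinite $\mathfrak{gl}_1$-cohomology of Section~\ref{sec:BRST}, we decompose $W_\infty(\mathfrak g_2)\otimes V_{\sqrt{-1}\mathbb Z}=\bigoplus_{a\in\mathbb Z}C_a\otimes V_{\sqrt{-1}(\mathbb Z+a)}$. By Step 2, $C_0$ is the critical algebra and each $C_a$ is one of its modules. Tensoring with $V_{\mathbb Z}$ and taking $H^{\mathrm{rel},0}$ pairs $V_{\sqrt{-1}\mathbb Z+\dots}\otimes V_{\mathbb Z+\dots}$ into $\mathbb C$, because the total lattice is isotropic. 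Standard vanishing of relative cohomology for Fock modules of the isotropic lattice, as in \cite{CGNS}, then yields $\bigoplus_a C_a\otimes(\text{charge-}a\text{ Fock})$. This sum is exactly the reassembled $W_\infty(\mathfrak g_2)$, identified via the Heisenberg of $W_\infty(\mathfrak g_2)$ against $V_{\mathbb Z}$. The remaining work is to check that the vertex algebra structure matches, which follows from the cohomology being computed on a simple current extension.
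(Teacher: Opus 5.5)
Your Step 2 has a genuine gap: the homomorphism $W_{-h_1^\vee}(\mathfrak g_1,f_{\mathrm{sub}})\to (W_\infty(\mathfrak g_2)\otimes V_{\sqrt{-1}\mathbb Z})^{\mathbb C^*}$ is asserted, not constructed.

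\textbf{Where the construction fails.}
\begin{enumerate}
\item At generic level the input is a Heisenberg coset. The $\mathbb C^*$-orbifold there is $W^{k}(\mathfrak g_1,f_{\mathrm{sub}})\otimes\pi$, not $W^{k}(\mathfrak g_1,f_{\mathrm{sub}})$.
\item The rescaled limit of $W^\ell(\mathfrak g_2)$ is $W^\infty(\mathfrak g_2)$, which has a large center containing $J^\infty$ and the $w_i^\infty$. It is not $W_\infty(\mathfrak g_2)$. Its $\mathbb C^*$-orbifold is strictly larger than $W^{-h_1^\vee}(\mathfrak g_1,f_{\mathrm{sub}})$: already at weight one it contains $J^\infty$ and $\lambda$ separately. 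This is why the paper gets only an embedding (Theorem~\ref{thm:emb}). That embedding is proved only for $\mathfrak{sl}_n$, via the uniqueness result of \cite{LS}, and is left as a conjecture for $\mathfrak{so}_{2n+1}$.
\item For $\mathfrak{so}_{2n+1}$ the critical OPEs are not known, and there are no explicit images of the higher generators. So ``these become explicit normally ordered polynomials'' is not available.
\item Your claim that the Feigin--Frenkel central generators go to zero is exactly what is needed for the map to descend to the centerless quotient. It is not automatic, because their images only commute with the image of $W^{-h_1^\vee}$, not with the whole orbifold.
\item Your character comparison fails as stated. The centerless quotient is not freely generated by $J,G^\pm$: the top central generator is $:G^+G^-:$ plus $J$-monomials, so killing it imposes a relation. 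Its character is $\operatorname{ch}W^{-h_1^\vee}/\operatorname{ch}Z$, which uses freeness over the center. For $\mathfrak{sl}_2$, your PBW count gives dimension $9$ in weight two, but both $V_{-2}(\mathfrak{sl}_2)$ and the orbifold have dimension $8$.
\end{enumerate}

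\textbf{How the paper avoids this.} It runs the argument in the opposite direction, starting on the critical side with $V_{\mathbb Z}$ viewed as a $bc$-system.
\begin{itemize}
\item $X=:G^+b:$ and $Y=:G^-c:$ are $\tilde J[t]$-invariant. Since $\tilde J$ is killed in $H^{\mathrm{rel},0}$ (Proposition~\ref{prop:relcoh}) and the central fields vanish in $W_{-h_1^\vee}$, only the constant leading coefficient $c$ of the $G^+G^-$ OPE survives.
\item So the only structural input is $c\neq 0$. This is known for $\mathfrak{sl}_n$, and for $\mathfrak{so}_{2n+1}$ it is deduced in Theorem~\ref{thm:OPE} from the simplicity of $W_{-2n+1}$.
\item Heisenberg semisimplicity and Lemma~\ref{lem:vanish} concentrate the cohomology in degree $0$.
\item The Euler--Poincar\'e principle, with the correct character $\operatorname{ch}W^{-h_1^\vee}/\operatorname{ch}Z$ and the $q$-identities of Corollary~\ref{cor:comiden}, shows that $H^{\mathrm{rel},0}$ is exactly the free field algebra generated by $X,Y$. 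The generic-level duality enters only through these characters.
\item The orbifold isomorphism then follows formally from Theorem~\ref{thm:isoalgebra2}.
\end{itemize}

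Your Step 3 is the converse inversion, essentially Theorem~\ref{thm:isoalgebra}, and is fine. If you prove the BRST statement first and then invert, you never need a limit map or a proof that the center is killed.
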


As a corollary of Theorem \ref{thm:KScritical}, we prove that
 the two large level limit $W$-algebras $W_{\infty}(\mathfrak{sl}_{2n|1}) $ and $W_{\infty}(\mathfrak{osp}_{2|2n})$ are isomorphic and hence
\begin{cor}
  There is an isomorphism of vertex algebras
\[
W_{-2n}(\mathfrak{sl}_{2n},f_{\mathrm{sub}})
\cong
W_{-2n+1}(\mathfrak{so}_{2n+1},f_{\mathrm{sub}}).
\]  
\end{cor}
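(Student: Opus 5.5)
The corollary should follow from the main theorem once the two large level limits on the dual side are identified. We apply the first isomorphism of Theorem \ref{thm:KScritical} twice: with $(\mathfrak g_1,\mathfrak g_2)=(\mathfrak{sl}_{2n},\mathfrak{sl}_{2n|1})$, where $h_1^\vee=2n$, and with $(\mathfrak g_1,\mathfrak g_2)=(\mathfrak{so}_{2n+1},\mathfrak{osp}_{2|2n})$, where $h_1^\vee=2n-1$. This gives
\[
W_{-2n}(\mathfrak{sl}_{2n},f_{\mathrm{sub}})\cong (W_\infty(\mathfrak{sl}_{2n|1})\otimes V_{\sqrt{-1}\mathbb Z})^{\mathbb C^*},\qquad
W_{-2n+1}(\mathfrak{so}_{2n+1},f_{\mathrm{sub}})\cong (W_\infty(\mathfrak{osp}_{2|2n})\otimes V_{\sqrt{-1}\mathbb Z})^{\mathbb C^*}.
\]
It therefore suffices to find an isomorphism $W_\infty(\mathfrak{sl}_{2n|1})\cong W_\infty(\mathfrak{osp}_{2|2n})$ that intertwines the two $\mathbb C^*$-actions. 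On each side the $\mathbb C^*$ acts through the charge of the $\mathfrak{gl}_1$ Heisenberg field, combined with the lattice charge of $V_{\sqrt{-1}\mathbb Z}$. An isomorphism that sends Heisenberg field to Heisenberg field, up to a normalization matching the charges, then induces an isomorphism of the orbifolds.

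To produce this isomorphism I would use the description of the large level limits from Section \ref{sec:limits} as generalized free field algebras (Definition \ref{defn:freefield}). On each side the limit is strongly generated by the same type of field content:
\begin{itemize}
\item a rank one Heisenberg field;
\item a pair of odd fields $G^\pm$ of weight $\tfrac{n+1}{2}$, where $n$ is read appropriately on each side;
\item even fields whose weights are determined by the exponents.
\end{itemize}
The even fields have weights $2,3,\dots,2n$ for $\mathfrak{sl}_{2n|1}$, minus the Heisenberg contribution. For $\mathfrak{osp}_{2|2n}$ they have weights $2,4,\dots,2n$ together with a further set of generators. So the first step is to match the generating types precisely.

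In the limit the OPEs degenerate. The even free fields become commutative or of free boson type. The only nontrivial structure sits in the pairing $G^+(z)G^-(w)\sim \mathrm{const}\,(z-w)^{-(n+1)}$ together with their Heisenberg charges. Such a free field algebra is determined up to isomorphism by this data: the number and weights of the generators, their parities and $\mathbb C^*$-charges, and the nondegenerate pairing. I would check that this data coincides on both sides, rescaling generators if needed.

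Alternatively, and more robustly, I would argue through the second isomorphism of Theorem \ref{thm:KScritical}. If the critical level subregular $W$-algebras of $\mathfrak{sl}_{2n}$ and $\mathfrak{so}_{2n+1}$ could be compared directly this route would be circular, so instead I would use the first route at the level of generators.

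The main obstacle is matching the generator content and $\mathbb C^*$-weights exactly, in particular making sure the numbers of free generators agree in each conformal weight and that the normalization of the Heisenberg field produces the same lattice $\sqrt{-1}\mathbb Z$ in the orbifold. I would first compare graded characters of the two limits, as products over generators, to confirm agreement. I would then construct the explicit map on generators and check that it preserves the only nonzero OPEs. Once this is done, taking $\mathbb C^*$-invariants of the tensor products with $V_{\sqrt{-1}\mathbb Z}$ yields the claimed isomorphism
\[
W_{-2n}(\mathfrak{sl}_{2n},f_{\mathrm{sub}})\cong W_{-2n+1}(\mathfrak{so}_{2n+1},f_{\mathrm{sub}}).
\]
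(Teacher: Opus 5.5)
Your reduction is the same as the paper's. Both centerless critical-level algebras are $\mathbb C^*$-orbifolds of $W_\infty(\mathfrak g_2)\otimes V_{\sqrt{-1}\mathbb Z}$, so it suffices to find a charge-preserving isomorphism $W_\infty(\mathfrak{sl}_{2n|1})\cong W_\infty(\mathfrak{osp}_{2|2n})$.

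The gap is in how you propose to obtain that isomorphism. You treat $W_\infty(\mathfrak g_2)$ as still containing a Heisenberg field and even fields $W_2,\dots,W_{2n}$ (resp.\ $W_2,W_4,\dots,W_{2n}$), and you plan to match this generator data. It does not match. For $n\geq 2$ the $\mathfrak{sl}_{2n|1}$ side has even generators in weights $3,5,\dots,2n-1$ that the $\mathfrak{osp}_{2|2n}$ side lacks. There is also no ``further set of generators'' for $\mathfrak{osp}_{2|2n}$ to compensate: its principal $W$-superalgebra is generated by $J,G^\pm,W_2,W_4,\dots,W_{2n}$ only. So your planned character comparison would fail, already in weight $3$.

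The missing idea is that in the large level limit every charge-neutral generator becomes central. By Proposition \ref{prop:OPE}, only pairs $Y^{\alpha},Y^{-\alpha}$ with $\alpha\neq 0$ have nonzero OPEs. Hence $J^\infty$ and all $W_i^\infty$ have trivial OPE with everything; they are not ``of free boson type''. Since $W_\infty$ is by definition the quotient by the ideal generated by the center, only $G^\pm$ survive. Each $W_\infty(\mathfrak g_2)$ is therefore the free field algebra on two odd fields of weight $n+\frac12$ and charge $\pm1$, with $G^+(z)G^-(w)\sim c\,(z-w)^{-2n-1}$.

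There is then no Heisenberg field in $W_\infty$ to match. The $\mathbb C^*$ acts through the charge grading, and the Heisenberg field of the orbifold comes from $V_{\sqrt{-1}\mathbb Z}$.

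You still need $c\neq 0$ on both sides, since otherwise $G^\pm$ would be central too. For $\mathfrak{sl}_{2n|1}$ this is visible in \eqref{eq:GLOPE}. For $\mathfrak{osp}_{2|2n}$ the paper obtains it from Theorem \ref{thm:OPE} together with the character argument in Theorem \ref{thm:KSneq}. Rescaling one generator then gives an isomorphism that automatically respects the $\mathbb Z$-grading, and the orbifold isomorphisms finish the proof. This is exactly the paper's argument.

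Finally, the route you dismissed as circular is not circular. Theorems \ref{thm:KScritial} and \ref{thm:KSneq} compute $H^{\mathrm{rel},0}(W_{-h_1^\vee}(\mathfrak g_1,f_{\mathrm{sub}})\otimes V_{\mathbb Z})$ directly from the leading term of the $G^+G^-$ OPE, using $X={:}G^+b{:}$ and $Y={:}G^-c{:}$. That is precisely how the paper identifies both dual limits with the same free field algebra.
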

Andrew Linshaw made us aware of the possibility of such isomorphisms and in fact he explained to us that it is even believed that there is a surjection  of universal critical level subregular $W$-algebras, $W^{-2n}(\mathfrak{sl}_{2n},f_{\mathrm{sub}})
\twoheadrightarrow
W^{-2n+1}(\mathfrak{so}_{2n+1},f_{\mathrm{sub}}).$

\subsubsection{Consequences for representation theory}

The subregular $W$-algebras have a Heisenberg vertex algebra as subalgebra and hence modules are automatically graded by (generalized) Heisenberg weight. Since we consider the case of critical level there is no conformal vector, however there is still a Hamiltonian $H$. In analogy to the definition of weight categories of affine vertex algebras at non-critical level \cite[Def.3.1]{ACK} we have the following list of categories (see Def \ref{defcatwtmod});

Let $V$ be a  vertex algebra with a Hamiltonian and a Heisenberg subalgebra $\pi$. Then we define
\begin{enumerate}
\item $V$-\textup{wtmod}:
 the category of finitely generated $V$-modules with finite-dimensional weight spaces, that is an object $M$ is finitely generated and graded by conformal weight and Heisenberg weight, that is 
\[
M = \bigoplus_{\lambda, \Delta} M_{\lambda, \Delta}
\]
and
$\text{dim}\   M_{\lambda, \Delta} < \infty$ for any $(\lambda, \Delta)$ and for each $\lambda$ there exists $h_\lambda$, such that $M_{\lambda, \Delta} = 0$ for $\text{Re}(\Delta) < \text{Re}(h_\lambda)$.
\item $V$-\textup{wtmod}$_{\geq}$: the full subcategory
of $V$-\textup{wtmod} consisting of objects that are 
finitely generated lower bounded weight modules, that is $M$ in $V$-\textup{wtmod}$_{\geq}$, if it is in $V$-\textup{wtmod} and if Hamiltonian weight is lower bounded. 

\item $V$-\textup{wtmod}$_{\mathcal O}$: the full subcategory
of $V$-\textup{wtmod}$_{\geq}$: consisting of objects such that for every weight $\Delta$ there exists an $N \in \mathbb Z$, such that $M_{\lambda, \Delta} = 0$ for $\text{Re}(\lambda) \geq N$.

\item $V$-\textup{wtmod}$_{KL}$: the full subcategory
of $V$-\textup{wtmod}$_{\mathcal O}$ consisting of objects  with finite-dimensional Hamiltonian weight spaces.

\end{enumerate}
If $V$ is a vertex operator algebra, that is it has a conformal vector, then  one takes the Hamiltonian as the Virasoro zero-mode and one recovers \cite[Def.3.1]{ACK}  for $V$ an affine vertex algebra at non-critical level. 

The only case where modules of the category of weight modules are classified is the case of the simple affine vertex algebra  of $\mathfrak{sl}_2$, $V_k(\mathfrak{sl}_2)$, at any admissible level \cite{ACK}, building on many earlier works \cite{AM2, KR2, A}. Subsequently it was then possible to show that this category is a ribbon category \cite{Flor, C3, C, CMY}.
The proofs of the existence of vertex tensor category \cite{C} as well as the computation of fusion rules of \cite{Flor} relied on the Kazama-Suzuki/Feigin-Semikhatov duality with the $N=2$ super conformal algebra. Note that the latter is the principal $W$-superalgebra of $\mathfrak{sl}_{2|1}$. 

Our next aim is to provide a framework that upgrades the Feigin-Semikhatov duality at the critical level to an equivalence of certain blocks of representation categories. We formalize this situation as follows:

Let $L$ be a non-degenerate integral lattice of rank $r$, and $L^-$ be the lattice $ \sqrt{-1} L$.  
For any $L$-graded vertex algebra $V^1$, and lattice vertex algebra $V_L$, the torus $T:=\text{Hom}(L,\mathbb C^*)$ acts diagonally on $V^1\otimes V_L$.
The pair of vertex algebras $(W_{\infty}(\mathfrak g_2), W_{-h_1^\vee}(\mathfrak g_1,f_{\mathrm{sub}}))$ in Theorem \ref{thm:KScritical}  appears to be a special example for a more general framework between  $V^1$ and the orbifold vertex algebra
\begin{equation}
V^2\cong(V^1\otimes V_L)^T
\end{equation}
and  the reverse relation
\[
H^{\text{rel},0}(\mathfrak{gl}_1^{\otimes r}, V^2\otimes V_{L^-}) \cong V^1,
\]
 is just a consequence of Theorem \ref{thm:isoalgebra}.
 
Let $\mathfrak h$ be the Heisenberg algebra associated to $L$ and $L^*$ the dual lattice of $L$.
We also studied the abelian categories between the two vertex algebras $V^1$ and $V^2$. 
The category for $V^i$ denoted by $\cC_i$ is the category of $L$-graded $V^i$-modules. 
As the vertex algebra $V^2$ contains a Heisenberg vertex subalgebra, one can define a family of abelian subcategories $\cC_2^\mu$ of $\cC_2$, consisting of modules that are semisimple over the Heisenberg vertex algebra and supported on $\mu+L$ for  $\mu\in \mathfrak h^*$. 
The isomorphisms mentioned above can be lifted to mutually inverse functors between abelian categories, namely for $\mu\in L^*$, one can define the orbifold functor $F_\mu$ and relative cohomology functor $G_\mu$ (see Corollary \ref{cor:func} for the precise definition) between the two categories $\cC_1$ and $\cC_2^\mu$,
\[
F_{\mu}: \cC_1\to \cC_2^\mu, \qquad G_\mu: \cC_2^\mu \to \cC_1.
 \]
We then show that the functors $F_\mu$ and $G_\mu$ are exact functors, which induces the equivalence for the categories. 
The shifting of the Heisenberg weight $\mu$ by the functor $F_{\mu}$ coincides with the spectral flow induced from the Li's $\Delta$-operator \cite{L},  on the corresponding module by $F_0$. Thus we obtain the following main theorem.
\begin{theorem} \label{thm:catequiv}
For $\mu\in L^*$, let $\sigma_{\mu}$ be the spectral flow induced by the $\Delta$-operator associated with $\mu$.
\begin{enumerate} 
\item The categories $\cC_1$ and $\cC_2^\mu$ are equivalent as abelian categories.
\item For any $M\in \cC_1$, there is an $V^2$-module isomorphism
\[
\sigma_\mu^*(F_0(M))\cong F_{\mu}(M).
\]
\end{enumerate}
\end{theorem}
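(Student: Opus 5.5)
We define the two functors explicitly and show that their composites are naturally isomorphic to the identity.

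\textbf{The functors.} For $M\in\cC_1$ with $L$-grading $M=\bigoplus_{\alpha\in L}M_\alpha$, set
\[
F_\mu(M)=\bigoplus_{\alpha\in L} M_\alpha\otimes V_{L+\mu}[-\alpha],
\]
where $V_{L+\mu}[-\alpha]$ is the Fock module $\pi_{\mu-\alpha}$ of the Heisenberg algebra $\mathfrak h$. Equivalently, $F_\mu(M)$ is the $T$-invariant part of $M\otimes V_{L+\mu}$ for the twisted torus action. It is a $V^2=(V^1\otimes V_L)^T$-module. Since the weights of the diagonal Heisenberg algebra lie in $\mu+L$ and the Fock spaces are semisimple, $F_\mu(M)\in\cC_2^\mu$. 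Conversely, for $N\in\cC_2^\mu$ set
\[
G_\mu(N)=H^{\mathrm{rel},0}(\mathfrak{gl}_1^{\otimes r},\,N\otimes V_{L^-+\nu})
\]
for the appropriate shift $\nu$ matching $\mu$. This is a $V^1$-module by the isomorphism $H^{\mathrm{rel},0}(\mathfrak{gl}_1^{\otimes r},V^2\otimes V_{L^-})\cong V^1$ of Theorem \ref{thm:isoalgebra}.

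\textbf{Exactness and the composites.} $F_\mu$ is exact, because on each graded piece it is tensoring with a fixed vector space. For $G_\mu$, decompose $N=\bigoplus_{\beta\in\mu+L}N_\beta$ into Heisenberg isotypic components, $N_\beta=\Omega_\beta\otimes\pi_\beta$, where $\Omega_\beta$ is the multiplicity space (the Heisenberg coset part). The relative semi-infinite cohomology of $\pi_\beta\otimes\pi^-_{\gamma}$ is the standard $\mathfrak{gl}_1$ BRST computation already used for Theorem \ref{thm:isoalgebra}. It is one-dimensional in degree $0$ when $\gamma$ pairs with $\beta$ so that the total $\mathfrak{gl}_1$ weight vanishes, and it is zero otherwise. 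Hence
\[
G_\mu(N)\cong\bigoplus_\beta \Omega_\beta,
\]
which is exact in $N$ because $\Omega_\beta=\mathrm{Hom}_{\mathfrak h}(\pi_\beta,N)$ is exact on the semisimple-over-$\mathfrak h$ category $\cC_2^\mu$. Applying the same computation to $F_\mu(M)$ gives
\[
G_\mu F_\mu(M)\cong\bigoplus_\alpha M_\alpha\cong M,
\]
and one checks that the $V^1$-actions agree using the identification of $V^1$ as cohomology. In the other direction, $F_\mu G_\mu(N)\cong\bigoplus_\beta\Omega_\beta\otimes\pi_\beta\cong N$. This uses that $N$ is generated over $V^2$ compatibly with the $V_L$-intertwining structure, so that $N$ is recovered from the $V^1$-module $\bigoplus_\beta\Omega_\beta$ by dressing with lattice Fock spaces. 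Together these give part (1).

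\textbf{The main obstacle} is making the $V^1$-action on $\bigoplus_\beta\Omega_\beta$ and the inverse reconstruction rigorous. Concretely, one has to show that a $V^2$-module in $\cC_2^\mu$ extends canonically to a $V^1\otimes V_L$-type object, namely a $T$-equivariant module, so that $N\cong(G_\mu N\otimes V_{L+\mu})^T$. The plan is to transport the problem through the cohomology functor and use the vanishing of $H^{\mathrm{rel},i}$ for $i\neq0$ on Fock modules, so that natural transformations computed degree-wise are isomorphisms.

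\textbf{Part (2).} Li's $\Delta$-operator $\Delta(\mu,z)$, built from the Heisenberg field $h_\mu$, defines the twisted module $\sigma_\mu^*(F_0(M))$. On $V_L$-modules it sends $V_{L+\lambda}$ to $V_{L+\lambda+\mu}$, since spectral flow by $\mu\in L^*$ shifts Fock momenta, and it acts trivially on the $V^1$ factor because $h_\mu$ lives in the lattice part of $V^2$. Applying this to $F_0(M)=\bigoplus_\alpha M_\alpha\otimes\pi_{-\alpha}$ gives $\bigoplus_\alpha M_\alpha\otimes\pi_{\mu-\alpha}=F_\mu(M)$. The isomorphism is the identity on the underlying spaces, and compatibility with the $V^2$-actions follows from $\Delta(\mu,z)Y(v,z)\Delta(\mu,z)^{-1}=Y(\Delta(\mu,z)v,z)$ for $v\in V^2$.
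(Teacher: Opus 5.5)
\textbf{There is a genuine gap in part (1).} Your architecture matches the paper's: $F_\mu$ as a $T$-orbifold, $G_\mu$ as relative cohomology, Lemma \ref{lem:vanish} to compute the composites, and a momentum shift for part (2). The direction $G_\mu F_\mu(M)\cong M$ is fine. There $F_\mu(M)$ is literally the invariant part of a tensor product, so the computation of Theorem \ref{thm:isoalgebra} applies; this is the paper's $\Phi_M$.

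The gap is the other direction, $F_\mu G_\mu\cong\mathrm{Id}_{\mathcal C_2^\mu}$, which is what gives essential surjectivity of $F_\mu$. You never write down a $V^2$-module map $N\to F_\mu G_\mu(N)$. Instead you appeal to $N$ being ``generated over $V^2$ compatibly with the $V_L$-intertwining structure''. That is not part of the definition of $\mathcal C_2^\mu$, which only assumes Heisenberg semisimplicity with weights in $\mu+L$, and you then concede the point. Your sketched plan cannot close it. Vanishing of $H^{\mathrm{rel},i}$ for $i\neq0$ plus a weight-by-weight comparison gives exactness of $G_\mu$ and an isomorphism $F_\mu G_\mu(N)\cong\bigoplus_\lambda\Omega_\lambda\otimes\pi_{-\lambda+\mu}\cong N$ of $\pi$-modules. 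A degree-wise argument can only show that an already constructed morphism is invertible; it says nothing about whether the two $V^2$-actions agree. Going through faithfulness and fullness of $G_\mu$ runs into the same obstruction.

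What is missing is the paper's explicit map $\Psi_N$ and the reason it intertwines. Let $\Omega_\lambda\subset N$ be the Heisenberg highest-weight vectors of weight $-\lambda+\mu$, so that canonically $N=\bigoplus_\lambda\Omega_\lambda\otimes\pi_{-\lambda+\mu}$. Send $n\otimes s\mapsto[n\otimes e^{i(\lambda-\mu)}]\otimes s$, up to sign conventions. Since $N\mapsto\Omega_\lambda$ is functorial, this map is natural, which gives the squares in Theorem \ref{thm:equivalence}.

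For the intertwining property, take a charged vector $v=u\otimes e^{-\beta}\in V^1_\beta\otimes\pi_{-\beta}$. On $N$ one has $[h_{(m)},Y(v,z)]=-\beta(h)z^mY(v,z)$. Strip off on both sides the Heisenberg exponentials $E^{\pm}$ that occur in $Y_{V_L}(e^{-\beta},z)$. The remaining operator commutes with all $h_{(m)}$ with $m\neq0$, so it preserves $\bigoplus_\lambda\Omega_\lambda$. Hence $Y(v,z)$ factors as a multiplicity-space operator times the standard Fock vertex operator of $e^{-\beta}$.

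In the relative complex, the $h$- and $h^-$-exponentials combine into exponentials of the negative modes of the degenerate Heisenberg field. These act trivially on cohomology by Lemma \ref{lem:vanish}. So the $V^1$-action on $G_\mu(N)\cong\bigoplus_\lambda\Omega_\lambda$ is exactly the multiplicity-space factor, and re-dressing with $V_{\mu+L}$ in $F_\mu$ returns $Y(v,z)$. This factorization is what the paper's ``similarly'' relies on, and it is the idea your argument lacks.

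\textbf{Part (2).} Your route agrees with the paper's: twist only the $V_L$ factor, and $\sigma_\mu^*V_{L+\lambda}\cong V_{L+\lambda+\mu}$. However, the identity $\Delta(\mu,z)Y(v,z)\Delta(\mu,z)^{-1}=Y(\Delta(\mu,z)v,z)$ you cite is not valid. Conjugating the Heisenberg field by $\Delta(\mu,z)$ on a module does not even converge. The intertwiner is the momentum shift $p(h)e^{\nu}\mapsto\epsilon(\nu,\mu)\,p(h)e^{\nu+\mu}$, including the cocycle factor. It is checked as in Theorem \ref{thm:sflow}: the extra power of $z$ that $Y_{V_L}(e^{\lambda},z)$ produces on shifted momenta equals the power by which $\Delta(\mu,z)$ rescales a charge-$\lambda$ vector.
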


Theorems~\ref{thm:KScritical} and~\ref{thm:catequiv} identify the principal block of the centerless critical-level $W$-algebra
$W_{-n-1}(\mathfrak{sl}_{n+1},f_{\mathrm{sub}})$ (resp. $W_{-2n+1}(\mathfrak{so}_{2n+1},f_{\mathrm{sub}})$)
with the principal block of the centerless large-level limit
$F_{n+1}:=W_{\infty}(\mathfrak{sl}_{n+1|1})$ (resp. $F_{2n}:=W_{\infty}(\mathfrak{osp}_{2|2n})$),
which is a free field algebra generated by two odd fields
$X^\pm(z)$
of conformal weight $\frac{n+2}{2}$ (resp. $\frac{2n+1}{2}$).

Let $\mathfrak g_n$ be the mode algebra of $F_n$, i.e. the $\mathbb Z$-graded Lie superalgebra spanned by the odd modes $X^\pm_m$ together with an even center. Let $Z(\mathfrak g_n)$ denote the center of the universal enveloping algebra $U(\mathfrak g_n)$, which is an exterior algebra generated by $2n$ elements.
As explained in Section~\ref{sec:repncorr}, one can construct mutually inverse functors between the abelian category of finitely generated $\mathbb Z$-graded $F_n$-modules and the category of finite-dimensional $\mathbb Z$-graded $Z(\mathfrak g_n)$-modules, either by taking vacuum spaces or by induction.
Corollary~\ref{thm:classification} then shows that these two abelian categories are equivalent, and Remark~\ref{rm:loewy} implies that the Loewy structure of any finitely generated $F_n$-module is completely determined by the Loewy structure of the regular $Z(\mathfrak g_n)$-module. Combining this with Theorems~\ref{thm:KScritical} and~\ref{thm:catequiv}, we obtain a complete description of the principal block of the centerless critical-level $W$-algebra $W_{-h_1^\vee}(\mathfrak{g}_{1},f_{\mathrm{sub}})$. For example, the algebra of self-extensions of $W_{-n}(\mathfrak{sl}_{n},f_{\mathrm{sub}}) $ (resp. $W_{-2n+1}(\mathfrak{so}_{2n+1},f_{\mathrm{sub}}) $) is isomorphic to the coordinate ring of the variety if $n \times n$ (respectively $2n \times 2n$) matrices whose $2 \times 2$ minors vanish, see Remark \ref{rem:ext}. Note that if we intersect this variety with the variety of traceless (respectively symplectic) matrices then one obtains the closure of the minimal nilpotent orbit of $\mathfrak{sl}_{n}$ (respectively $\mathfrak{sp}_{2n}$)\footnote{We thank S. Nakatsuka for pointing this out.}. 
We point out that the subregular nilpotent orbit of type $A$ (resp. $B$) is Lusztig-Spaltenstein dual to the minimal nilpotent orbit of type $A$ (resp. $C$).

Since in particular the centerless subregular $W$-algebra $W_{-2}(\mathfrak{sl}_{2},f_{\mathrm{sub}})$ is the centerless affine vertex algebra $V_{-2}(\mathfrak{sl}_2)$,  we have the Loewy structure of its projective cover $P(V_{-2}(\mathfrak{sl}_2))$.

\begin{theorem}\label{thm:Loewy}
Let $V$ be the vacuum module of $\widehat{\mathfrak{sl}}_2$, and let
$
V^{(m)}=\sigma^{m,*}(V)
$
denote its spectral-flow associated with the $m$th simple coroot of $\mathfrak{sl}_2$. Then the projective cover
$
P\bigl(V_{-2}(\mathfrak{sl}_2)\bigr)
$
has the following Loewy structure.

\begin{tikzpicture}[->, thick, xscale=0.95, yscale=0.72]
			\node (1) at (0,6) [] {$V^{(0)}$};
			\node (21) at (-4.5,3) [] {$V^{(1)}$};
			\node (22) at (-1.5,3) [] {$V^{(1)}$};
			\node (23) at (1.5,3) [] {$V^{(-1)}$};
			\node (24) at (4.5,3) [] {$V^{(-1)}$};
			\node (31) at (-7.5,0) [] {$V^{(2)}$};
			\node (32) at (-4.5,0) [] {$V^{(0)}$};
			\node (33) at (-1.5,0) [] {$V^{(0)}$};
			\node (34) at (1.5,0) [] {$V^{(0)}$};
			\node (35) at (4.5,0) [] {$V^{(0)}$};
			\node (36) at (7.5,0) [] {$V^{(-2)}$};
			\node (41) at (-4.5,-3) [] {$V^{(1)}$};
			\node (42) at (-1.5,-3) [] {$V^{(1)}$};
			\node (43) at (1.5,-3) [] {$V^{(-1)}$};
			\node (44) at (4.5,-3) [] {$V^{(-1)}$};
			\node (5) at (0,-6) [] {$V^{(0)}$};
			\draw[red] (1) -- (21);
			\draw[green] (1) -- (22);
			\draw[blue] (1) -- (23);
			\draw[mauve] (1) -- (24);
            \draw[green] (21) -- (31);
            \draw[blue] (21) -- (32);
            \draw[mauve] (21) -- (33);
            \draw[red] (22) -- (31);
            \draw[blue] (22) -- (34);
            \draw[mauve] (22) -- (35);
            \draw[red] (23) -- (32);
            \draw[green] (23) -- (34);
            \draw[mauve] (23) -- (36);
            \draw[red] (24) -- (33);
            \draw[green] (24) -- (35);
            \draw[blue] (24) -- (36);
            \draw[blue] (31) -- (41);
            \draw[mauve] (31) -- (42);
            \draw[green] (32) -- (41);
            \draw[mauve] (32) -- (43);
            \draw[green] (33) -- (42);
            \draw[blue] (33) -- (43);
            \draw[red] (34) -- (41);
            \draw[mauve] (34) -- (44);
            \draw[red] (35) -- (42);
            \draw[blue] (35) -- (44);
            \draw[red] (36) -- (43);
            \draw[green] (36) -- (44);
            \draw[green] (33) -- (42);
            \draw[mauve] (41) -- (5);
            \draw[blue] (42) -- (5);
            \draw[green] (43) -- (5);
            \draw[red] (44) -- (5);
            \node at (-7,6) {};
			\node[align=center] at (0,-7.25) {Loewy diagram of $P(V_{-2}(\mathfrak{sl}_2))$ in $V_{-2}(\mathfrak{sl}_2)$-\textup{wtmod}};
		\end{tikzpicture}

Note that $V^{(\pm 1)}=\sigma^{\pm 1,*}(V) \cong D^{\mp }_{\pm \alpha}$ with $D^\mp_\lambda$ the simple module whose top level is the  highest ($+$) resp. lowest ($-$) weight module of $\mathfrak{sl}_2$ of highest/lowest weight $\lambda$, and $\alpha$ the simple root of $\mathfrak{sl}_2$. We list the smallest categories out of
$V_{-2}(\mathfrak{sl}_2)$-\textup{wtmod}$_{\clubsuit}$ with $\clubsuit \in \{ KL, \mathcal O, \geq, \{ \ \} \}$ that contains $V^{(n)}$.
\begin{itemize}
    \item $V^{(0)} = V_{-2}(\mathfrak{sl}_2)$ in $V_{-2}(\mathfrak{sl}_2)$-\textup{wtmod}$_{KL}$;
    \item $V^{(-1)} = D^{+ }_{- \alpha}$ in $V_{-2}(\mathfrak{sl}_2)$-\textup{wtmod}$_{\mathcal O}$;
    \item $V^{(1)} = D^{-}_{\alpha}$ in $V_{-2}(\mathfrak{sl}_2)$-\textup{wtmod}$_{\mathcal \geq}$;
    \item $V^{(\pm 2)}$ in $V_{-2}(\mathfrak{sl}_2)$-\textup{wtmod};
\end{itemize}
Especially the modules $V^{(\pm 2)}$ are not lower bounded.  
\end{theorem}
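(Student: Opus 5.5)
Theorem \ref{thm:Loewy} is the case $n=1$ of the general machinery, so the plan is to transport a projective cover computed for the free field algebra. For $n=1$ we have $\mathfrak g_1=\mathfrak{sl}_2$, $f_{\mathrm{sub}}=0$ and $W_{-2}(\mathfrak{sl}_2,0)=V_{-2}(\mathfrak{sl}_2)$. Theorem \ref{thm:KScritical} then gives
\[
V_{-2}(\mathfrak{sl}_2)\cong (F_2\otimes V_{\sqrt{-1}\mathbb Z})^{\mathbb C^*},
\]
where $F_2=W_\infty(\mathfrak{sl}_{2|1})$ is generated by two odd fields $X^\pm$ of weight $\tfrac32$; in other words, $F_2$ is a pair of symplectic-fermion-like fields with modes spanning $\mathfrak g_2$. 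By Theorem \ref{thm:catequiv}, $F_0$ (together with its spectral flows) identifies the category of $\mathbb Z$-graded $F_2$-modules $\cC_1$ with the principal block $\cC_2^0$. Since this is an equivalence of abelian categories, it preserves projective covers and Loewy diagrams. It therefore suffices to compute the Loewy diagram of the projective cover of $F_2$ in $\cC_1$ and then to identify the images of the simple objects.

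On the free field side, Corollary \ref{thm:classification} and Remark \ref{rm:loewy} reduce $\cC_1$ to finite-dimensional $\mathbb Z$-graded modules over $Z(\mathfrak g_2)$, an exterior algebra on $2n=4$ generators. I would take these generators to be the low modes $X^+_{-1/2},X^+_{-3/2},X^-_{-1/2},X^-_{-3/2}$ (or whatever the zero-mode-type central elements turn out to be). The projective cover of the trivial module is the regular module $\Lambda(\mathbb C^4)$, whose radical filtration is by exterior degree, with dimensions $1,4,6,4,1$. This already matches the diagram. The arrows are multiplication by the four generators, which I colour red, green, blue and mauve, and each degree-$j$ monomial maps to the $j+1$ monomials containing it. So the shape of the diagram is exactly the Hasse diagram of the boolean lattice on four letters.

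Next I would track the grading. The two $X^+$-generators carry Heisenberg ($\mathbb Z$-) charge $+1$ and the two $X^-$-generators carry charge $-1$. Under $F_0$, a copy of the simple module $F_2$ sitting in charge $m$ is sent to the spectral flow $\sigma^{m,*}(V)$; this is Theorem \ref{thm:catequiv}(2), applied to the grading-shifted module (a charge shift $M[m]$ corresponds to $F_{m}$ up to isomorphism). The layers therefore have charges as follows. Layer 1 has $+1,+1,-1,-1$. Layer 2 has $+2$ (from $\{X^+X^+\}$), $-2$, and four $0$'s. Layer 3 has $\pm1$ twice each, and layer 4 has $0$. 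This reproduces the labels $V^{(m)}$ in the diagram.

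For the final part of the theorem, I would compute $\sigma^{\pm1,*}(V)$ explicitly from the spectral flow of $\widehat{\mathfrak{sl}}_2$, under which $e_n\mapsto e_{n\mp 1}$ and the Cartan modes are shifted. The vacuum then becomes a relaxed extremal vector annihilated by a shifted Borel subalgebra, giving $D^{\mp}_{\pm\alpha}$. For $V^{(\pm2)}$, the conformal weights along the $e$- or $f$-strings are unbounded below, since $L_0$ shifts by a term linear in $h_0$ with the wrong sign, so these modules are not lower bounded; checking the remaining category memberships is then direct. The main obstacle is the bookkeeping of the sign and charge conventions: one has to confirm that the generators of $Z(\mathfrak g_2)$ have exactly these charges and match the normalization of the spectral flow.
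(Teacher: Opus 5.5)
Your proposal is correct and is essentially the paper's proof: you transport the regular module $Z(\mathfrak g_2)\cong\bigwedge\mathbb C^4$, the projective cover of the trivial module (Corollary~\ref{thm:classification} and Remark~\ref{rm:loewy}), through $F_0$ via Theorem~\ref{thm:equivalence}, read off the Boolean-lattice Loewy diagram, and label each composition factor of charge $m$ by $V^{(m)}$ via Theorem~\ref{thm:sflow}/Corollary~\ref{cor:sflow}. One harmless correction: with the paper's mode conventions the central generators are $X^\pm_{\pm\frac12}$ (the modes with $|n|\le\frac{N-1}{2}$), not $X^\pm_{-\frac32}$, since $[X^+_{-\frac32},X^-_{\frac32}]=\binom{-1}{2}=1\neq 0$; the correct generators still carry charges $+1,+1,-1,-1$, which is all your argument uses.
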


Previously the structure of representations of centerless affine vertex algebras had been studied in the much smaller category $V_{-h^\vee}(\mathfrak{g})$-\textup{wtmod}$_{\mathcal O}$, see the works of Arakawa and Fiebig \cite{AF1, AF2, Fi}. Beyond this category there are studies by Adamovic et al with a focus on realizing simple modules \cite{A2, A4, AJN}.
The only simple objects in the principal block of the category $V_{-2}(\mathfrak{sl}_2)$-\textup{wtmod}$_{\mathcal O}$ are the vacuum $V^{(0)}$ and $ V^{(-1)}$ and the projective cover in the category $V_{-2}(\mathfrak{sl}_2)$-\textup{wtmod}$_{\mathcal O}$ has the much simpler structure:
\begin{center}
\begin{tikzpicture}[->, thick]
			\node (top) at (0,1) [] {$V^{(0)}$};
			\node (left) at (-1,0) [] {$V^{(-1)}$};
            \node (right) at (1,0) [] {$V^{(-1)}$};
			\node (bottom) at (0,-1) [] {$V^{(0)}$};
			\draw (top) -- (left);
            \draw (top) -- (right);
            \draw (left) -- (bottom);
            \draw (right) -- (bottom);
			\node[align=center] at (0,-1.75)  {Loewy diagram of $P(V_{-2}(\mathfrak{sl}_2))$ in $V_{-2}(\mathfrak{sl}_2)$-\textup{wtmod}$_{\mathcal O}$};
		\end{tikzpicture}
\end{center}

We conclude with a list of problems and questions that are connected to our studies. 

\subsubsection{Open questions and future directions}

\begin{enumerate}
    \item At present we can describe the principal block of the centerless subregular $W$-algebras and our intention is to obtain similar results for other blocks and to include representations with non-trivial action of the center. The idea is to replace the free field superalgebras by larger vertex superalgebras that contain central extensions of the free field superalgebras as quotients. 
    \item The categorical framework to study the critical level Feigin-Semikhatov dualities deserves further attention. This framework should apply to more interesting examples and it is important to study the involved functors on intertwining operators. 
    \item The overall goal is to identify $W_{-h^\vee}(\mathfrak g, f_{\text{sub}})$-wtmod or a larger subcategory of  $W^{-h^\vee}(\mathfrak g, f_{\text{sub}})$-wtmod with a category of modules of a certain quantum supergroup. $V_k(\mathfrak{sl}_2)$-wtmod for $k$ admissible is braided tensor equivalent to a so-called partial semisimplification of a twisted Deligne product of categories of quantum $\mathfrak{sl}_2$ and $\mathfrak{sl}_{2|1}$ at roots of unities related to $k$ \cite{CL-KL1}. The techniques of \cite{CL-KL1, CL-KL2} hopefully also apply to these cases and one naturally expects quantum supergroups that are somehow related to degenerate versions of $\mathfrak{sl}_{n|1}$ or $\mathfrak{osp}_{2|2n}$. A hint comes 
    from the mirabolic/orthosymplectic Satake equivalence: equivalences between certain categories of  equivariant perverse sheaves on affine Grassmannians and categories of degenerate versions of supergroups \cite{AFGT, AFT}.  
\item Only recently the first results on the center of affine vertex superalgebras at the critical level have been obtained \cite{A3, AN, AFN} and using our methods it should be possible to say more about principal $W$-superalgebras of $\mathfrak{sl}_{n|1}$ at the critical level. 
\item For $n=1$, the affine Feigin-Tipunin algebra $FT[\mathfrak{gl}_m,1]$ is also expected to be related to global sections of the sheaf $\mathcal D^{\text{ch}}_{X, -m}$ of chiral differential operators on the base affine space $X:=SL_{m+1}/N$ at the level $-m$, where $N$ is the maximal unipotent subgroup of $SL_{m+1}$. 
More precisely, we expect the following isomorphism
\begin{equation}\label{eq:chiralFT}
FT[\mathfrak{gl}_m,1]\cong H^{\mathrm{rel},0}(\mathfrak{gl}_1^{\otimes m}, \Gamma(X,\mathcal D^{\text{ch}}_{X, -m}) \otimes \mathcal E(m+1)),
\end{equation} 
where  $\mathcal E(m+1)$ is the rank $m+1$ $bc$-system. 
We note that $V_m:=\Gamma(X,\mathcal D^{\text{ch}}_{X, -m})$ is isomorphic to the rank two $\beta\gamma$-system when $m=1$ \cite{ADS1, ADS2}, and hence the right hand side of  (\ref{eq:chiralFT}) coincides with $FT[\mathfrak{gl}_2,1] \cong L_1(\mathfrak{psl}_{2|1})$.
Furthermore the associated variety $X_{V_m}$ of $V_m$ is shown to be isomorphic to the affine closure of the cotangent bundle $T^*(X)$ \cite{ADS2}, which is a symplectic singularity.  In particular $V_m$ is  quasi-lisse. It is natural to conjecture that the associated variety of $FT[\mathfrak{gl}_m,1]$ is isomorphic to the Hamiltonian reduction (equivalently, $(\mathbb C^*)^m$-reduction) of $X_{V_m}$, and  $FT[\mathfrak{gl}_m,1]$ is also quasi-lisse.
\end{enumerate}

\subsection*{Acknowledgements}
 T. Creutzig is supported by DFG project Projektnummer 551865932.
 X.Dai acknowledges support from JSPS KAKENHI Grant Numbers 21H04993.
We thank Shigenori Nakatsuka and Peter Fiebig for their helpful remarks.

\section{Background}

\subsection{Vertex algebras}
In this paper, we will follow the setting of the vertex algebra in \cite{K}.
\begin{defn}
A vertex (super)algebra is a superspace $V$, equipped with a vector $1\in V_{\bar{0}}$, a parity preserving linear map (called the state-field correspondence) from $V$ to $ End\,V[[z,z^{-1}]]$,
\begin{align*}
V \longrightarrow & End\,V[[z,z^{-1}]]\\
u \longmapsto & Y(u,z)=\sum_{n\in\mathbb Z} u_{(n)}z^{-n-1}
\end{align*}
a linear map $T\in( End\,V)_{\bar{0}}$, satisfying the following axioms
\begin{enumerate}
\item (the truncation condition): For every two vectors $u,v\in V$,
\begin{equation}
u_{(n)}v=0
\end{equation}
for $n$ sufficiently large.
\item (vacuum): $T1=0, Y(1,z)=id$, $Y(u,z)1\in V[[z]]$ and $Y(u,z)1|_{z=0}=u$. 
\item (translation covariance): 
\begin{equation}\label{2.2}
[T, Y(u,z)]=\partial_z Y(u,z),
\end{equation}
\item (locality): For every $u,v\in V$, $Y(u,z)$ and $Y(v,z)$ are mutually local, namely, there exists $N\in \mathbb Z_{>0}$, such that
\begin{equation}
[Y(u,z),Y(v,w)](z-w)^N=0.
\end{equation} 
\end{enumerate}
\end{defn}

 A vertex algebra is called a vertex operator algebra if there is a distinguished vector $\omega$ (called the Virasoro element), such that $L_0$ is semisimple and 
\[
L_{-1}=T, \; [L_m,L_n]=(m-n)L_{m+n}+\dfrac{m^3-m}{12}\delta_{m,-n}c
\]
where $L_n=\omega_{(n+1)}$, and $c\in \mathbb C$ is a constant called the central charge. An element $v\in V$ is said to have conformal weight $n$ if $L_0v=nv$.

For any $a,b\in V$, we will frequently use the Borcherds identity
\begin{equation} \label{2.4}
(a_{(n)}b)_{(m)}=\sum_{i=0}^{\infty} (-1)^i {n\choose i}a_{(n-i)}b_{(m+i)}- \sum_{i=0}^\infty (-1)^{p(a)p(b)+n+i}{n\choose i} b_{(m+n-i)} a_{(i)},
\end{equation}
where we denote by $p:V\to \{0,1\}$ the parity function.

Let \(V\) be a vertex algebra with translation operator \(T\) and vacuum vector $1$.
A subset \(S \subset V\) is called a strong generating set of \(V\) if every element of \(V\) can be expressed as a linear combination of vectors of the form
\[
a^{i_1}_{(-n_1-1)} a^{i_2}_{(-n_2-1)} \cdots a^{i_k}_{(-n_k-1)} 1,
\]
where \(a^{i_j} \in S\), \(n_j \geq 0\), and \(k \geq 0\).
Equivalently, \(V\) is spanned by normally ordered monomials in the fields \(a^i\) and their derivatives:
\[
: \partial^{n_1} a^{i_1} \cdots \partial^{n_k} a^{i_k} :.
\]
Assume \(S = \{a^i\}_{i \in I}\) is an ordered strong generating set of \(V\).
We say that \(S\) freely generates \(V\) if \(V\) has a PBW-type basis given by ordered monomials
\[
\left\{
: \partial^{n_1} a^{i_1} \cdots \partial^{n_k} a^{i_k} :
\;\middle|\;
i_1 \le \cdots \leq i_k,\; n_j \geq 0
\right\},
\]
where repetitions are forbidden whenever $a^{i_j}$ is odd, and these vectors are linearly independent.
Thus, a freely generating set is a strong generating set with no nontrivial normally ordered relations among its generators and their derivatives.

\subsection{Free field algebra}
\begin{defn}\label{defn:freefield}
A free field algebra $V$ is a $\frac12 \mathbb Z_{\geq 0}$-graded vertex superalgebra  with $V_0\cong \mathbb C$ such that it has strong generators $\{X^i \,| \,i\in I\}$ with the OPEs
\[
X^i(z)X^j(w) \sim a_{ij} (z-w)^{-\mathrm{wt}(X^i) -\mathrm{wt}(X^j) }, \qquad a_{ij}\in \mathbb C
\]
where $a_{ij}$ vanishes whenever $\mathrm{wt}(X^i)+\mathrm{wt}(X^j)\notin \mathbb Z$. 
\end{defn}

We recall the simple free field algebras introduced in \cite{CL1}. Let $n$ and $k$ be positive integers.
A free field algebra is said to be of orthogonal type of rank $n$ if it is strongly generated by fields
$
X^1,\dots,X^n
$
of conformal weight $\frac{k}{2}$ satisfying the nontrivial OPEs
\[
X^i(z)X^j(w)\sim \delta_{ij}(z-w)^{-k}.
\]
If $k$ is even (resp.~odd) and the generators are even (resp.~odd), the algebra is called an even (resp.~odd) free field algebra of orthogonal type.
Similarly, a free field algebra is said to be of symplectic type of rank $n$ if it is strongly generated by fields
$X^i,Y^i$ for $i=1,\cdots, n$
of conformal weight $\frac{k}{2}$ satisfying the nontrivial OPEs
\[
X^i(z)Y^j(w)\sim \delta_{ij}(z-w)^{-k}.
\]
If $k$ is odd (resp.~even) and the generators are even (resp.~odd), the algebra is called an even (resp.~odd) free field algebra of symplectic type.

We note that when $k=1$, the rank $n$ even free field algebra of symplectic type is just the rank $n$ $\beta\gamma$-system, while the rank $n$ odd free field  algebra of orthogonal type is the rank $n$ $bc$-system. Likewise when $k=2$, the rank $n$ even free field algebra of orthogonal type coincides with the rank $n$ Heisenberg vertex algebra.

\subsection{Affine vertex algebras}

Let $\mathfrak{g}$ be a finite-dimensional simple Lie superalgebra equipped with an invariant bilinear form $(\cdot,\cdot)$.
The associated affine Lie algebra is
\[
\widehat{\mathfrak{g}} = \mathfrak{g}[t,t^{-1}] \oplus \mathbb{C}K,
\]
with commutation relations for any $\xi,\eta\in \gg$
\[
[\xi \otimes t^m, \eta \otimes t^n]
= [\xi,\eta] \otimes t^{m+n} + m\,\delta_{m,-n}(\xi,\eta)K.
\]

Fix a level $k \in \mathbb{C}$. The universal affine vertex algebra $V^k(\mathfrak{g})$ is defined as the vacuum module of $\widehat{\mathfrak{g}}$ at level $k$,
\[
V^k(\mathfrak{g}) = \mathrm{Ind}_{\mathfrak{g}[t] \oplus \mathbb{C}K}^{\widehat{\mathfrak{g}}} \mathbb{C},
\]
where $\mathfrak{g}[t]$ acts trivially on $\mathbb C$ and $K$ acts by scaler $k$.
It is strongly generated by fields $J^{\xi}(z)$ for $\xi \in \mathfrak{g}$ with OPEs
\[
J^{\xi}(z)J^{\eta}(w) \sim \frac{k(\xi,\eta)}{(z-w)^2} + \frac{[\xi,\eta](w)}{z-w}.
\]

\subsection{Affine $W$-algebras}

Let $f \in \mathfrak{g}$ be a nilpotent element in the even part of $\gg$, and fix an $\mathfrak{sl}_2$-triple $(e,h,f)$ with standard relation $[x,e]=e,[x,f]=-f, [e,f]=2x$.
Let $\mathfrak{g} = \bigoplus_{j \in \frac{1}{2}\mathbb{Z}} \mathfrak{g}_j$ be the corresponding good grading, namely $[\gg_i,\gg_j]\subset \gg_{i+j}, f\in \gg_{-1},$ and that $ad\, f: \gg_j\to \gg_{j-1}$ is injective for $j\geq \frac12$ and surjective for $j\leq \frac12$.
Let 
\[
\gg_+:=\oplus_{k\in \frac12 \mathbb Z_{\geq 0}} \gg_k,\qquad  \gg_-:=\oplus_{k\in \frac12\mathbb Z_{<0}} \gg_k.
\]
There is a invariant bilinear form on $\gg_{\frac12}$ defined by
\[
\langle a,b\rangle:=(f,[a,b]),
\]
which is non-degenerate according to assumption.
Let $\mathcal E(\gg_+)$ be the $bc$-system on $\gg_+\oplus \gg_+^*$, which is a free field algebra generated by odd elements 
$\{ b^{\xi_\alpha},c^{\xi^*_{\alpha}}\}$, where $\{\xi_{\alpha}\}_{\alpha\in S_+}$ is a basis of $\gg_+$ and $\{\xi^*_{\alpha}\}_{\alpha\in S_+}$ is the dual basis. The OPEs are given by
\[
b^{\xi_{\alpha}}(z) c^{\xi^*_{\beta}}(w) \sim \frac{\delta_{\alpha,\beta}} {z-w},\qquad b^{\xi_{\alpha}}(z) b^{\xi_{\beta}}(w) \sim 0\sim c^{\xi^*_{\alpha}}(z) c^{\xi^*_{\beta}}(w), \qquad \text{ for any } \alpha,\beta\in S_+.
\]
Let $\mathcal{E}(\mathfrak{g}_{\frac{1}{2}})$ be the neutral vertex superalgebra associated to the symplectic superspace $\mathfrak{g}_{\frac{1}{2}}$ equipped with the nondegenerate  bilinear form $\langle \cdot,\cdot \rangle$.
It is strongly generated by fields $\Phi_{\xi_\alpha}$, where $\{\xi_\alpha\}_{\alpha\in S_{\frac12}}$ is a homogeneous basis of $\mathfrak{g}_{\frac{1}{2}}$. The parity of $\Phi_{\xi_\alpha}$ coincides with the parity of $\xi_\alpha$.
The OPEs are given by
\[
\Phi_{\xi_\alpha}(z)\Phi_{\xi_\beta}(w)
\sim
\frac{\langle \xi_\alpha, \xi_\beta \rangle}{z-w},  \qquad \text{ for any } \alpha,\beta\in S_{\frac12}.
\]

The quantum Drinfeld--Sokolov reduction \cite{FF,KRW} is defined via the BRST complex
\[
C^\bullet(V^k(\mathfrak{g})) = V^k(\mathfrak{g}) \otimes \mathcal{E}(\gg_+)\otimes \mathcal {E}(\gg_{\frac12}),
\]
equipped with a differential $d_{\mathrm{BRST}}:=d_0$, where $d(z)$ is given by 
\begin{align*}
d(z)=&\sum_{\alpha\in S_+} (-1)^{|\xi_\alpha|} :J^{\xi_\alpha} c^{\xi_{\alpha}^*} :-\frac12 \sum_{\alpha,\beta\in S_+} (-1)^{|\xi_{\alpha}| (1+|\xi_{\beta}|)} :b^{[\xi_{\alpha},\xi_{\beta}]} c^{\xi_{\alpha}^*} c^{\xi_{\beta}^*}: +\\
&\sum_{\alpha\in S_+} (f,\xi_{\alpha})  c^{\xi_\alpha^*} +\sum_{\alpha\in S_{\frac12}} :c^{\xi_\alpha^*} \Phi_{\xi_{\alpha}}:,
\end{align*}
and that $b^{a_1\xi_{\alpha}+a_2\xi_{\beta}}:=a_1 b^{\xi_{\alpha}}+a_2 b^{\xi_{\beta}}$ for $a_1,a_2\in \mathbb C$ and $\alpha,\beta \in S_{+}$.
The cohomology
\[
W^k(\mathfrak{g}, f) := H^0\big(C^\bullet(V^k(\mathfrak{g})), d_0\big)
\]
is called the universal affine $W$-algebra associated to $(\mathfrak{g}, f)$ at level $k$.

According to \cite{KW},  the universal affine $W$-algebra $W^k(\mathfrak{g}, f)$ is strongly generated by a finite set of fields
\[
\{a^i(z)\}_{i \in I},
\]
where the index set $I$ is canonically identified with a basis of the centralizer $\mathfrak{g}^f$.
Moreover,  $W^k(\mathfrak{g}, f)$ admits a PBW-type basis, i.e.  the ordered set of strong generators $\{a^i\}_{i \in I}$ freely generates $W^k(\mathfrak{g}, f)$ in the sense that
\[
\mathrm{gr}\,W^k(\mathfrak{g}, f)
\cong
\mathbb{C}[\partial^n a^i \mid i \in I,\; n \geq 0],
\]
where the right-hand side is a polynomial algebra equipped with the derivation $\partial$.

\section{Relative Semi-infinite Cohomology in Vertex Algebra Setting}\label{sec:BRST}

In this section we recall the construction of relative semi-infinite cohomology in the setting of vertex algebras, with particular emphasis on the case of the Heisenberg vertex algebra associated to $\widehat{\mathfrak{gl}}_1$. The construction will be used in Section \ref{sec:KSduality} to reinterpret coset correspondences for subregular $\mathcal{W}$-algebras in terms of semi-infinite BRST reduction.

\subsection{The BRST complex for $\widehat{\mathfrak{gl}}_1$}
Let $H(z)$ be the Heisenberg field generating the rank-one Heisenberg vertex algebra $\pi^H$, satisfying the OPE
\[
H(z)H(w) \sim \frac{k}{(z-w)^2},
\]
for some level $k \in \mathbb{C}^\times$. We call $H $ degenerate, and respectively $\pi^H$ degenerate Heisenberg vertex algebra, if $k=0$.
Let $\mathcal{E}$ be the rank one $bc$-system generated by odd fields $b(z)=\sum_{n\in \mathbb Z}b_nz^{-n-1},c(z)=\sum_{n\in \mathbb Z} c_n z^{-n}$ with OPEs
\[
b(z)c(w) \sim \frac{1}{z-w}.
\]

Let $M$ be a module over a vertex algebra $V$ containing the degenerate Heisenberg vertex algebra $\pi^H$. We define the semi-infinite BRST complex
\[
C^{\frac{\infty}{2}+\bullet}(M)
:= M \otimes  \mathcal{E},
\]
equipped with a differential $d_0$ given by the residue of the BRST current
\[
d(z) = :c(z)H(z):,
\qquad d_0 = \mathrm{Res}_{z=0}\, d(z).
\]
Then $(C^{\frac{\infty}{2}+\bullet}(M), d_0)$ is a cochain complex.
The semi-infinite cohomology of $M$ is defined by
\[
H^{\frac{\infty}{2}+\bullet}(M) := H^*\big(C^{\frac{\infty}{2}+\bullet}(M), d_0\big).
\]

\subsection{Relative version for $\widehat{\mathfrak{gl}}_1$}

We now introduce the relative version. Let $H_{(0)}$ denote the zero mode of the Heisenberg field $H(z)$. We define the subspace of horizontal vectors by
\[
C^{\mathrm{rel}}(M)
:= \{\, v \in C^{\frac{\infty}{2}+\bullet}(M) \mid H_{(0)} v = 0,\; b_0 v = 0 \,\}.
\]
which can be regarded as taking invariants with respect to the Cartan subalgebra action of $\widehat{\mathfrak{gl}}_1$ together with the standard constraint coming from the $bc$-system.
The relative semi-infinite cohomology is defined by
\[
H^{\mathrm{rel},\bullet}(\mathfrak{gl}_1,M)
:= H^\bullet\big(C^{\mathrm{rel}}(M), d_0\big).
\]
For each $p\in \mathbb Z$, the assignment
\[
M \longmapsto H^{\mathrm{rel},p}(\mathfrak{gl}_1,M)
\]
defines a functor from the category of $V$-modules to the category of  the vertex superalgebra $H^{\mathrm{rel},0}(\mathfrak{gl}_1, V)$-modules.

Let $M$ be a module over a degenerate Heisenberg vertex algebra of rank $n>1$. One can similarly define the relative semi-infinite cohomology $H^{\text{rel},\bullet}(\mathfrak{gl}_1^{\otimes n},M )$. Since rank $n$ (degenerate) Heisenberg vertex algebra is a product of $n$ commuting rank one (degenerate) Heisenberg vertex algebras, the relative cohomology of higher rank can be constructed iteratively from the rank one case.
 To simplify notation, we henceforth write $H^{\mathrm{rel},\bullet}(M)$ for
$H^{\mathrm{rel},\bullet}(\mathfrak{gl}_1^{\otimes n},M)$.

Let $L$ be a non-degenerate integral lattice and $\mathfrak h:= \mathbb{C} \otimes_{\mathbb{Z}} L$ the corresponding Heisenberg Lie algebra.
Let $\pi^{L^\pm}$ denote the Heisenberg vertex algebras generated by the fields associated with the lattices $L^+:=L$ and $L^-:=\sqrt{-1}L$, respectively. 
Since the BRST differential decomposes into commuting rank one pieces, the following lemma remains true for higher rank case.

\begin{lemma} \cite{CGNS, FGZ}
 \label{lem:vanish}
For any $p \in \mathbb{Z}$ and $\lambda, \mu \in \mathfrak h^*$, we have
\[
H^{\mathrm{rel},p}(\pi^{L^+}_\lambda \otimes \pi^{L^-}_\mu)
   = \delta_{p,0} \, \delta_{\lambda+\mu,0} \, 
     \mathbb{C} \big[\, |\lambda\rangle \otimes |\mu\rangle \,\big].
\]
\end{lemma}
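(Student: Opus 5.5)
The plan is to reduce to the rank one case and then compute the cohomology explicitly by a Fock space argument. The rank one case is the classical computation of \cite{FGZ}.

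\emph{Reduction to rank one.} Choose a basis of $L$ and write $\pi^{L^+}_\lambda\otimes\pi^{L^-}_\mu$ as a tensor product of $r$ rank one pieces. Because $L^-=\sqrt{-1}L$, the combined field $H=h^++h^-$ in each direction is a degenerate Heisenberg field, since the levels $k$ and $-k$ cancel. The complex then factors as a tensor product of $r$ commuting rank one complexes, and so do the relative constraints. By the Künneth formula (we work over $\mathbb C$), it suffices to treat $r=1$.

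\emph{Rank one.} Let $h^\pm$ generate $\pi^{L^\pm}$ with OPEs $h^\pm h^\pm\sim \pm k/(z-w)^2$, and set $H=h^++h^-$ and $\tilde H=(h^+-h^-)/2$. Then:
\begin{itemize}
\item $H H\sim 0$, $\tilde H\tilde H\sim 0$, and $H\tilde H\sim k/(z-w)^2$, so $(H,\tilde H)$ is a hyperbolic pair;
\item the zero mode $H_{(0)}$ acts by $\lambda+\mu$, so the relative condition $H_{(0)}v=0$ forces the complex to vanish unless $\lambda+\mu=0$;
\item when $\lambda+\mu=0$, the module is the Fock space $\mathbb C[H_{-n},\tilde H_{-n}]_{n\ge1}\,|\lambda\rangle\otimes|\mu\rangle$, tensored with the $bc$ Fock space with $b_0$ killed, i.e.\ generated by $b_{-n}$ and $c_{-n+1}$ for $n\geq1$.
\end{itemize}
The differential is $d_0=\sum_n c_{-n}H_n$. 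Since the $H_n$ with $n\ne0$ act as $k\partial/\partial\tilde H_{-n}$ (up to normalization), the complex splits as an infinite tensor product over $n\ge1$ of Koszul-type complexes. Each factor is spanned by $\tilde H_{-n}$, $c_{-n}$ and their partners $H_{-n}$, $b_{-n}$.

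The standard contracting homotopy $K=\sum_{n\neq0}\frac1{nk}\,b_{-n}\tilde H_{n}$ (a suitable combination, possibly up to sign) satisfies $\{d_0,K\}=\mathcal N$. Here $\mathcal N$ is a grading operator counting all modes except the vacuum, and it is positive on every state other than the highest weight vector. On $C^{\mathrm{rel}}$ every $\mathcal N$-eigenspace with nonzero eigenvalue is therefore acyclic. The cohomology is concentrated in $\mathcal N=0$, which is spanned by $|\lambda\rangle\otimes|\mu\rangle$ in ghost degree $0$, and this vector is a cocycle.

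\emph{Main obstacle.} The delicate step is checking that $K$ preserves the relative subcomplex and that $\{d_0,K\}$ really is the nonnegative grading $\mathcal N$ with no zero-mode contributions. Both hold because $b_0$ and $H_{(0)}$ are excluded from $K$, and because $H_{(0)}=0$ on the relative complex. A secondary point is ensuring that each $\mathcal N$-eigenspace is finite-dimensional in each degree, which holds since the grading is by mode number.
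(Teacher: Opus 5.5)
Your route is the same as the paper's. The paper gives no argument beyond citing the rank-one computation and noting that in higher rank the differential splits into commuting rank-one pieces; your Fock-space contracting homotopy is the standard proof behind that citation. However, two details are wrong as written, and the conclusion depends on both.

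\emph{The sign in the homotopy.} For $m\neq 0$ one computes
\[
\{d_0,\, b_{-m}\tilde H_m\} = H_{-m}\tilde H_m + mk\, b_{-m}c_m .
\]
With the coefficient $\frac{1}{mk}$ for every $m$, the terms with $m<0$ contribute $-(N_{\tilde H_{-|m|}}+N_{c_{-|m|}})$, writing $N$ for number operators. So $\{d_0,K\}=\sum_{n>0}(N_{H_{-n}}+N_{b_{-n}}-N_{\tilde H_{-n}}-N_{c_{-n}})$, which is indefinite; it vanishes, for instance, on $H_{-1}\tilde H_{-1}|\lambda\rangle\otimes|\mu\rangle$. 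The correct homotopy is $K=\sum_{n\neq0}\frac{1}{|n|k}\,b_{-n}\tilde H_n$, which gives the total mode count. Alternatively, use the coefficient $\frac{\operatorname{sgn}(n)}{k}$, which gives the mode-weighted count. Finite-dimensionality of eigenspaces is not needed: $d_0$ commutes with the diagonalizable operator $\mathcal N=\{d_0,K\}$, so a cocycle $x$ with $\mathcal N x=Nx$, $N\neq 0$, equals $d_0(N^{-1}Kx)$.

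\emph{The relative ghost space.} With $c(z)=\sum_n c_n z^{-n}$, the kernel of $b_0$ is generated by $b_{-n}$ and $c_{-n}$ for $n\geq 1$. Your list, $c_{-n+1}$ for $n\geq 1$, includes $c_0$. This is not cosmetic. Since $K$ does not involve $b_0$, the vector $c_0|\lambda\rangle\otimes|\mu\rangle$ would lie in the kernel of $\mathcal N$ and produce an extra class in degree $1$; that is the absolute, non-relative answer. It is exactly the relative condition $b_0v=0$ that removes this class and concentrates the cohomology in degree $0$.
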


Assume that $L^+$ and hence $L^-$ are rank one lattices. Suppose a vertex algebra $V$ contains two Heisenberg vertex algebras $\pi^{L^\pm}$, it naturally contains a degenerate Heisenberg element.
More explicitly, let $J$ (resp. $J^{-}$) be the corresponding Heisenberg element in $\pi^{L^+}$ (resp. $\pi^{L^-}$).  The degenerate Heisenberg element is given by
\[
\tilde J := J +J^- \in \pi^{L^+}\otimes \pi^{L^-}
\]
with OPE
 \[
\tilde J(z)\tilde J(w)\sim 0 .
\]
We denote by $\tilde J[t]$ the loop algebra spanned by $\tilde J \otimes t^n$ for $n\in \mathbb Z$. Let
 \[
V^{\tilde J[t] }
:= \{ v \in V \mid \tilde J(n)v = 0,\; n \geq 0 \}
\] 
be the commutant vertex subalgebra with respect to  $\tilde J[t]$.
\noindent The following Proposition identifies the relative semi-infinite cohomology with a quotient of the commutant vertex subalgebra

\begin{prop}\label{prop:relcoh}
Let $V$ be a vertex algebra containing two rank one Heisenberg vertex algebras as  vertex subalgebras, i.e. $V\supset \pi^{L^+}\otimes \pi^{L^-}$, and assume that $V$ is semisimple as a $\pi^{L^\pm}$-module. Then the natural map
\begin{equation} \label{map:surj}
V^{\tilde J[t]}
\longrightarrow
H^{\mathrm{rel},0}(V)
\end{equation}
is surjective, and its kernel is the ideal generated by $\tilde J$. Consequently,
\[
H^{\mathrm{rel},p}(V)
\cong
\delta_{p,0}\,
V^{\tilde J[t]}/\langle \tilde J\rangle .
\]
\end{prop}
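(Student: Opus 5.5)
We reduce the computation to Lemma \ref{lem:vanish} by decomposing $V$ over the two Heisenberg algebras. Write $\pi:=\pi^{L^+}\otimes\pi^{L^-}$. By the semisimplicity hypothesis,
\[
V\cong\bigoplus_{\lambda,\mu}\pi^{L^+}_\lambda\otimes\pi^{L^-}_\mu\otimes \Omega_{\lambda,\mu},
\]
where $\Omega_{\lambda,\mu}$ is the multiplicity space of the Fock module $\pi^{L^+}_\lambda\otimes\pi^{L^-}_\mu$. Concretely, $\Omega_{\lambda,\mu}$ is the space of vectors of weight $(\lambda,\mu)$ annihilated by all positive modes of $J$ and $J^-$, i.e.\ the corresponding piece of $\mathrm{Com}(\pi,V)$.

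The differential $d_0=\mathrm{Res}\,:c\tilde J:$ and the relative conditions $\tilde J_{(0)}v=0$, $b_0v=0$ only involve $\tilde J$ and the $bc$-system. Hence they act only on the Fock factor, and the complex splits as
\[
C^{\mathrm{rel}}(V)\cong\bigoplus_{\lambda,\mu} C^{\mathrm{rel}}\bigl(\pi^{L^+}_\lambda\otimes\pi^{L^-}_\mu\bigr)\otimes\Omega_{\lambda,\mu}.
\]
One has to check that an infinite direct sum commutes with cohomology; this is fine because $d_0$ preserves each summand. Lemma \ref{lem:vanish} then gives
\[
H^{\mathrm{rel},p}(V)\cong\delta_{p,0}\bigoplus_{\lambda}\Omega_{\lambda,-\lambda},
\]
with the class represented by $|\lambda\rangle\otimes|-\lambda\rangle\otimes\Omega_{\lambda,-\lambda}$. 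This already proves the vanishing for $p\neq0$.

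For $p=0$ we identify this answer with $V^{\tilde J[t]}/\langle\tilde J\rangle$. First, $V^{\tilde J[t]}$ lies in $C^{\mathrm{rel}}$ and consists of $d_0$-cocycles: $d_0$ acts on $v\otimes 1$ as $\sum_n \tilde J_{(n)}v\otimes c_{-n}$, roughly, and the terms with $n\ge 0$ vanish on $V^{\tilde J[t]}$. The terms with $n<0$ must be handled by checking that $v\otimes 1$ is in fact represented correctly. A cleaner approach is to compute $V^{\tilde J[t]}$ blockwise. Inside $\pi^{L^+}_\lambda\otimes\pi^{L^-}_\mu$, the vectors annihilated by $\tilde J_{(n)}$ for all $n\ge0$ exist only when $\lambda+\mu=0$, because $\tilde J_{(0)}$ acts by $\lambda+\mu$. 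For $\mu=-\lambda$, the nonnegative modes $\tilde J_{(n)}$ with $n\ge1$ act as derivatives in the generators $(J-J^-)_{(-m)}$ on the polynomial Fock space, since $[\tilde J_{(n)},(J-J^-)_{(-m)}]$ is a nonzero multiple of $\delta_{n,m}$ while $\tilde J$ commutes with $\tilde J$. Hence the invariants are $\mathbb C[\tilde J_{(-m)}]_{m\ge1}|\lambda,-\lambda\rangle$.

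It follows that
\[
V^{\tilde J[t]}=\bigoplus_\lambda \mathbb C[\tilde J_{(-m)}]\,|\lambda,-\lambda\rangle\otimes\Omega_{\lambda,-\lambda}.
\]
The ideal $\langle\tilde J\rangle$ inside this commutant is spanned by the monomials of positive degree in the $\tilde J_{(-m)}$. Indeed, $\tilde J$ is central in $V^{\tilde J[t]}$, so the ideal it generates is $\sum_m\tilde J_{(-m)}V^{\tilde J[t]}$, using that the $n$-products of $\tilde J$ with elements of the commutant vanish for $n\ge0$. The quotient is therefore $\bigoplus_\lambda\Omega_{\lambda,-\lambda}$. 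The map $v\mapsto[v\otimes 1]$ sends $|\lambda,-\lambda\rangle\otimes\omega$ to the generator of the cohomology and kills $\tilde J_{(-m)}(\cdots)$, since these are $d_0$-exact: $\tilde J_{(-m)}=[d_0,b_{-m}]$. This gives surjectivity and identifies the kernel.

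The main obstacles are twofold. One is verifying that the map $V^{\tilde J[t]}\to H^{\mathrm{rel},0}$ is well defined, i.e.\ that elements of the commutant are genuinely cocycles; this uses $d_0=\sum_n c_{-n}\tilde J_{(n)}$, relative to the mode conventions, together with the fact that in the rel complex the $\tilde J$ negative modes pair with $c$ positive modes, which annihilate the $bc$-vacuum. The other is confirming that the Fock invariant computation matches the representative in Lemma \ref{lem:vanish}.
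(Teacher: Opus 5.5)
Your proposal is correct and follows essentially the same route as the paper. Both arguments decompose $V$ into Fock modules $\pi^{L^+}_\lambda\otimes\pi^{L^-}_\mu$ tensored with multiplicity spaces, observe that $d_0$ and the relative constraints act only on the Fock and ghost factors, and then apply Lemma \ref{lem:vanish} blockwise. Your explicit description $V^{\tilde J[t]}=\bigoplus_\lambda\mathbb C[\tilde J_{(-m)}]\,|\lambda,-\lambda\rangle\otimes\Omega_{\lambda,-\lambda}$, together with $\langle\tilde J\rangle=\sum_{m\ge1}\tilde J_{(-m)}V^{\tilde J[t]}$ (an ideal of the commutant, not of $V$) and $\tilde J_{(-m)}=[d_0,b_{-m}]$, supplies the justification for the kernel and surjectivity claims. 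The paper only asserts the kernel claim and obtains surjectivity through its own computation of cocycle representatives.
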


\begin{proof}  
A direct computation shows that
\[
Q\bigl(J_{-m} b_{-n}\mathbf 1\bigr)
=
- m\, c_{-m} b_{-n}\mathbf 1
+ \tilde J_{-n} J_{-m}\mathbf 1 .
\]
For any cohomology class
$[x]\in H^{\mathrm{rel},0}(V)$, one may choose a
representative
\[
x\in V^{\tilde J}=:C
\quad \text{with} \quad Qx=0 ,
\]
which is equivalent to $x\in V^{\tilde J[t]}$.
Hence the map~\eqref{map:surj} is surjective.
Since $V$ is semisimple as a $\pi^{L^\pm}$-module, we may write
\[
V
\cong
\bigoplus_{\lambda,\mu}
\pi^{L^+}_\lambda
\otimes
\pi^{L^-}_\mu
\otimes
M_{\lambda,\mu},
\]
where $M_{\lambda,\mu}$ is the multiplicity space. 

Since the relative BRST differential acts trivially on $M_{\lambda,\mu}$, we obtain
\[
H^{\mathrm{rel},p}(V)
\cong
\bigoplus_{\lambda,\mu}
H^{\mathrm{rel},p}
(\pi^{L^+}_\lambda
\otimes
\pi^{L^-}_\mu
)
\otimes
M_{\lambda,\mu}.
\]
By Lemma~\ref{lem:vanish},
\[
H^{\mathrm{rel},p}(V)
=
0
\qquad (p\neq0),
\]
and
\[
H^{\mathrm{rel},0}(V)
\cong
\bigoplus_{\lambda}
\mathbb C
\big[
|\lambda\rangle\otimes |-\lambda\rangle
\big]
\otimes
M_{\lambda,-\lambda} .
\]
Hence by definition,
\[
H^{\mathrm{rel},0}(V)
=
\frac{
V^{\tilde J[t]}
}{
\operatorname{Im} d_0 \cap V^{\tilde J[t]}
}.
\]
The image of the differential is precisely the vertex algebra ideal generated by $\tilde J$. Therefore
\[
H^{\mathrm{rel},0}(V)
\cong
V^{\tilde J[t]}/\langle \tilde J\rangle .
\]
This proves the statement.
\end{proof}

\section{Equivalence of categories} \label{sec:equiv}

Take a non-degenerate integral lattice $L$ of rank $r$ with dual lattice $L^*$, and set $\mathfrak{h} := \mathbb{C} \otimes_{\mathbb{Z}} L$.  
Let $V^1$ be a vertex algebra graded by $L$, that is,
\[
V^1 = \bigoplus_{\lambda \in L} V^1_\lambda.
\]
Let $V_L = \bigoplus_{\lambda \in L} \pi_\lambda$ be the lattice vertex algebra associated with $L$.  
The torus 
\[
T := \mathrm{Hom}(L, \mathbb{C}^*) \cong (\mathbb{C}^*)^r
\]
acts on both $V^1$ and $V_L$ as follows: for any $t \in T$, $e^{\lambda} \in \pi_\lambda$, and $v \in V^1_\lambda$,
\[
t \cdot e^{\lambda} := \langle t, \lambda \rangle \, e^{\lambda}, 
\qquad 
t \cdot v: = \langle t, \lambda \rangle \, v.
\]
For any two $L$-graded $V$-modules $M$ and $N$, 
a $V$-module homomorphism $f: M \to N$ is called $L$-graded
if  it is compatible with the $L$-grading. 
For any $L$-graded $V^1$-module, and any $L$-graded $V_L$-module, one can define a $T$-action by the $L$-grading as well.
More explicitly,  fixed $\mu\in \mathfrak h^*$, and consider the $V_L$-module $\bigoplus_{\lambda\in L} \pi_{\mu+\lambda}$. 
Then the $T$-action on $V_{\mu+L}$ is given by 
\[
t\cdot v := \langle t, \lambda\rangle v,\qquad \text{ for } v\in \pi_{\mu+\lambda}.
\]
We note that the choice of $\mu$ for $V_{\mu+L}$ is not unique, and is determined only up to addition of an element of $L$. However, different choices of $\mu$ related by lattice shifts are connected via spectral flow (see Corollary \ref{cor:sflow}).

We define
\begin{equation}
V^2 := (V^1 \otimes V_L)^T,
\end{equation}
which, as a vector space, is isomorphic to 
\[
V^2 \cong \bigoplus_{\lambda \in L} V^1_\lambda \otimes \pi_{-\lambda}.
\]

For convenience, we  set $V_L^- := V_{L^-}$.

\begin{theorem}\label{thm:isoalgebra}
There is an isomorphism of vertex algebras
\begin{equation}
 H^{\mathrm{rel},p}(V^2 \otimes V_L^-)\; \cong \; \delta_{p,0}V^1.
\end{equation}
\end{theorem}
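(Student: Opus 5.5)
We compute the cohomology first as a vector space, using the Heisenberg decomposition, and then identify the vertex algebra structure through Proposition~\ref{prop:relcoh}.

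\textbf{Decomposition of the complex.} As a $\pi^{L}\otimes\pi^{L^-}$-module we have
\[
V^2\otimes V_L^- \cong \bigoplus_{\lambda\in L}\bigoplus_{\nu\in L} V^1_\lambda\otimes \pi_{-\lambda}\otimes \pi^{-}_{\nu}.
\]
Here $V^1_\lambda$ is a multiplicity space on which the relevant Heisenberg fields act trivially. The degenerate Heisenberg fields are $\tilde J^i = J^i + J^{-,i}$, where $J^i$ is taken in the $\pi^L$ factor of $V^2$ and $J^{-,i}$ in $V_L^-$; these involve no $V^1$ component. The BRST differential, built from the $r$ commuting rank one pieces, therefore acts only on the factors $\pi_{-\lambda}\otimes\pi^-_\nu$.

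\textbf{Cohomology as a vector space.} We apply Lemma~\ref{lem:vanish} summand by summand. The cohomology vanishes unless $p=0$ and $-\lambda+\nu=0$, that is $\nu=\lambda$; the signs depend on how $\pi^{L^-}_\nu$ is labelled, and this pairing is exactly what the lemma fixes. In that case each summand contributes the one-dimensional space $\mathbb C[|{-\lambda}\rangle\otimes|\lambda\rangle]$. Summing over $\lambda\in L$ gives
\[
H^{\mathrm{rel},p}(V^2\otimes V_L^-)\cong \delta_{p,0}\bigoplus_{\lambda\in L} V^1_\lambda\otimes \mathbb C\, e^{-\lambda}\otimes e^{\lambda}_- .
\]
Every $\nu\in L$ occurs in $V_L^-$, so every $\lambda$ survives, and we recover $V^1$ as an $L$-graded space.

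\textbf{Vertex algebra structure.} We use Proposition~\ref{prop:relcoh}, extended to rank $r$ by iterating over the commuting rank one pieces. It identifies $H^{\mathrm{rel},0}$ with $W:=(V^2\otimes V_L^-)^{\tilde J[t]}/\langle\tilde J\rangle$, which is a vertex algebra quotient. We then define the map
\[
\Phi: V^1\to W,\qquad v\mapsto [\,v\otimes e^{-\lambda}\otimes e^{\lambda}_-\,]\quad (v\in V^1_\lambda).
\]
Each such element lies in $V^1\otimes V_L\otimes V_L^-$, inside $(V^1\otimes V_L)^T\otimes V_L^-$, and is annihilated by the nonnegative modes $\tilde J_{(n)}$, $n\ge 0$. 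This holds because the total weight under $J+J^-$ is $-\lambda+\lambda=0$, and the lattice vectors $e^{-\lambda}\otimes e^\lambda_-$ are Heisenberg highest weight vectors for the nonnegative modes.

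To see that $\Phi$ is a vertex algebra homomorphism, we compare
\[
Y(v\otimes e^{-\lambda}\otimes e^\lambda_-,z)\,(w\otimes e^{-\mu}\otimes e^{\mu}_-).
\]
The lattice part gives
\[
Y(e^{-\lambda}\otimes e^\lambda_-,z)\,e^{-\mu}\otimes e^\mu_- = \epsilon\, z^{(\lambda,\mu)-(\lambda,\mu)}\, e^{-\lambda-\mu}\otimes e^{\lambda+\mu}_- \cdot E^-(z)E^+(z),
\]
where the exponential factors $E^\pm$ involve the modes of the combination $J-J^-$ paired with $\lambda$. The powers of $z$ cancel because $L^-$ carries the negated form, and the cocycles $\epsilon$ cancel for the same reason once compatible choices are made.

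The exponential factors still have to be shown to act as $1$ modulo $\langle \tilde J\rangle$ on classes of this form. A cleaner route is to use the vector space decomposition: the components with $\nu\neq\lambda$ are exact, so each class has a unique representative of the form $v\otimes e^{-\lambda}\otimes e^\lambda_-$ modulo the ideal. In addition, the Heisenberg descendants in the $(J-J^-)$-direction are identified with scalars in cohomology, as in the proof of Lemma~\ref{lem:vanish}. Thus $\Phi(v)_{(n)}\Phi(w)$ equals $\Phi(v_{(n)}w)$ up to an exact term and the cocycle sign.

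\textbf{Main obstacle.} The hard part is this last step: controlling the lattice cocycle signs and showing rigorously that the Heisenberg descendants generated by the exponentials are trivial in cohomology. I would first check that the two-cocycles on $L$ and $L^-$ are chosen so that $\epsilon(\lambda,\mu)\epsilon^-(\lambda,\mu)=1$; if they are not, I would twist $\Phi$ by a coboundary. After that, bijectivity of $\Phi$ follows from the dimension count above.
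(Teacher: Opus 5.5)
Your route is the paper's: the same map $\Phi(v)=[v\otimes e^{-\lambda}\otimes e^{\lambda}_-]$, the same summand-by-summand use of Lemma~\ref{lem:vanish} for $p\neq 0$ and for bijectivity, and the same lattice vertex-operator computation. Your handling of the $z$-powers and of the cocycles is more explicit than the paper's and is fine: the product cocycle is symmetric, hence a coboundary on the free group $L$.

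\emph{The gap} is the one step with real content, which you leave open. Your ``cleaner route'' only gives $\Phi(v)_{(n)}\Phi(w)=[u\otimes e^{-\lambda-\mu}\otimes e^{\lambda+\mu}_-]$ for \emph{some} $u\in V^1_{\lambda+\mu}$. The identification $u=v_{(n)}w$ is exactly the claim that the terms $v_{(n+j)}w\otimes(\text{$z^j$-coefficient of the exponential})$, $j\geq 1$, are exact.

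Your description of those terms points the wrong way. You say they involve modes of $J-J^-$ and are ``identified with scalars''. But $J-J^-$ has a nonzero double pole with $\tilde J=J+J^-$ (it is $2k$ for $J$ of level $k$). So descendants along $J-J^-$ are not $\tilde J[t]$-invariant, and are not even closed. Moreover, ``some scalar'' is not enough: a nonzero scalar in front of a positive power of $z$ would break $\Phi(v)_{(n)}\Phi(w)=\Phi(v_{(n)}w)$. You need these terms to be zero in cohomology.

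\emph{The missing observation} is that the exponent lies in the BRST direction itself. The momentum of $e^{-\lambda}\otimes e^{\lambda}_-$ in $\mathfrak h\oplus\mathfrak h^-$ pairs to zero with every degenerate field; this is precisely the $\tilde J_{(0)}$-invariance you used. The degenerate fields span a Lagrangian subspace, so the momentum is itself degenerate. In the normalization in which you computed that weight, the product of the two $E^-$ factors is $\exp\bigl(-\sum_{n>0}\tilde J^{\lambda}_{(-n)}z^n/n\bigr)$ with $\tilde J^\lambda=J^\lambda+J^{-,\lambda}$; the $E^+$ factors act trivially on the highest-weight vectors.

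Hence for $j\geq 1$ the $z^j$-term is $v_{(n+j)}w\otimes P_j(\tilde J^\lambda_{(-1)},\tilde J^\lambda_{(-2)},\dots)(e^{-\lambda-\mu}\otimes e^{\lambda+\mu}_-)$, where $P_j$ has no constant term. This term lies in $(V^2\otimes V_L^-)^{\tilde J[t]}$ because the degenerate fields mutually commute. It lies in $\langle\tilde J\rangle$ because $\tilde J_{(-n)}x=\frac{1}{(n-1)!}(\partial^{n-1}\tilde J)_{(-1)}x$. So it vanishes in $H^{\mathrm{rel},0}$ by Proposition~\ref{prop:relcoh}.

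Equivalently, and this is what the paper's one-line appeal to Lemma~\ref{lem:vanish} amounts to: $d_0$ preserves the conformal grading of $\pi\otimes\pi^-\otimes\mathcal E$. The cohomology of each summand $\pi^{L^+}_{-\nu}\otimes\pi^{L^-}_{\nu}$ sits in the weight of its highest-weight vector, and the $z^j$-coefficients for $j\geq 1$ have strictly larger weight. With this, $\mathrm{Res}_z z^n$ retains only $v_{(n)}w\otimes e^{-\lambda-\mu}\otimes e^{\lambda+\mu}_-$, and $\Phi$ is a homomorphism.
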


\begin{proof}
Define a linear map
\begin{align*}
\Phi:\;& V^1 \longrightarrow H^{\mathrm{rel},0}(V^2 \otimes V_L^-) 
  \;\cong\; H^{\mathrm{rel},0}\!\big( (V^1 \otimes V_L \otimes V_L^-)^{T} \big),\\
& v_\lambda \longmapsto 
  \big[\, v_\lambda \otimes e^{-\lambda} \otimes e^{i\lambda} \,\big],
\end{align*}
where $v_\lambda \in V^1_\lambda$, and $[m]$ denotes the cohomology class of $m$ in the relative cohomology.  
For any $v_\mu \in V^1_\mu$, we have
\[
\Phi(v_{\lambda(n)} v_\mu)
= \big[\, v_{\lambda(n)} v_\mu \otimes e^{-\lambda-\mu} \otimes e^{i\lambda+i\mu} \,\big].
\]
By the definition of vertex operators,
\begin{align*}
&(v_\lambda \otimes e^{-\lambda} \otimes e^{i\lambda})_{(n)}
   (v_\mu \otimes e^{-\mu}\otimes e^{i\mu}) \\
&\quad= \mathrm{Res}_z\, z^n \,
   Y_{V^1}(v_\lambda, z)v_\mu 
   \otimes Y_{V_L}(e^{-\lambda},z)  e^{-\mu}
   \otimes Y_{V_L^-}( e^{i\lambda}, z)  e^{i\mu} \\
&\quad= \mathrm{Res}_z\, z^n \,
   Y_{V^1}(v_\lambda, z)v_\mu 
   \otimes E^-(-\lambda, z)\,  e^{-\lambda-\mu} 
   \otimes E^-(\lambda, z)\, e^{i\lambda+i\mu}.
\end{align*}
By Lemma~\ref{lem:vanish}, we obtain that
\[
\Phi(v_\lambda)_{(n)} \Phi(v_\mu)
= \big[\, \mathrm{Res}_z\, z^n\, Y_{V^1}(v_\lambda, z)v_\mu 
   \otimes e^{-\lambda-\mu}\otimes e^{i\lambda+i\mu}
    \,\big]
= \Phi(v_{\lambda(n)} v_\mu).
\]
Similarly, one checks that $\Phi$ commutes with the translation operator.  
Therefore, $\Phi$ is a homomorphism of vertex algebras.  
Moreover, $\Phi$ is surjective by Lemma \ref{lem:vanish}, and injective as the composition of $\Phi$ with the natural projection
\[
H^{\mathrm{rel},0}(V^2 \otimes V_L^-) \longrightarrow V^1
\]
is the identity on $V^1$.  
Hence $\Phi$ is an isomorphism of vertex algebras, as claimed.
\end{proof}

\begin{theorem}\label{thm:isoalgebra2}
Let $V$ be a vertex algebra containing $\pi$ as a subalgebra. Assume that $V$ is semisimple as a module over the Heisenberg vertex subalgebra $\pi$, and  every irreducible $\pi$-submodule of $V$ is isomorphic to a Fock module $\pi_\nu$ for some $\nu \in L$. Then we have
\begin{equation}
( H^{\mathrm{rel},p}(V \otimes V_L^-) \otimes V_L)^T \;\cong\; \delta_{p,0}V.
\end{equation}
\end{theorem}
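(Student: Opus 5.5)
Since $V$ is semisimple over $\pi$ with every irreducible summand a Fock module $\pi_\nu$, $\nu\in L$, I will first write
\[
V\cong \bigoplus_{\nu\in L}\pi_\nu\otimes M_\nu ,
\qquad M_\nu:=\mathrm{Hom}_\pi(\pi_\nu,V),
\]
so that the multiplicity spaces are modules for the Heisenberg coset $C:=\mathrm{Com}(\pi,V)=M_0$. The plan is to run the argument of Theorem~\ref{thm:isoalgebra} in reverse: first compute $H^{\mathrm{rel},p}(V\otimes V_L^-)$ as an $L$-graded vertex algebra, then show that the orbifold construction $(\,\cdot\,\otimes V_L)^T$ rebuilds the Fock factors.

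\textbf{Step 1.} I will compute the cohomology using the decomposition
\[
V\otimes V_L^-\cong\bigoplus_{\nu,\lambda\in L}\pi_\nu\otimes\pi^{L^-}_{i\lambda}\otimes M_\nu ,
\]
where the relative differential is built from the degenerate field $\tilde J=J+J^-$ and acts trivially on $M_\nu$, exactly as in the proof of Proposition~\ref{prop:relcoh}. Lemma~\ref{lem:vanish} (in rank $r$, via the iterated rank-one construction) kills every summand except those with $\lambda=-\nu$, each in degree $0$ and one-dimensional. Hence
\[
H^{\mathrm{rel},p}(V\otimes V_L^-)\cong\delta_{p,0}\bigoplus_{\nu\in L}\mathbb C\bigl[|\nu\rangle\otimes|{-i\nu}\rangle\bigr]\otimes M_\nu=:\delta_{p,0}\,U .
\]
Here $U$ is $L$-graded by $U_\nu=M_\nu$. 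The vertex algebra structure on $U$ comes from the classes $[w_\nu\otimes e^{-i\nu}]$ with $w_\nu\in\pi_\nu\otimes M_\nu$ of top Fock weight, and the products are computed as in Theorem~\ref{thm:isoalgebra}, with the $E^-$ factors absorbed modulo the ideal $\langle\tilde J\rangle$.

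\textbf{Step 2.} I will then define
\[
\Psi: V\to (U\otimes V_L)^T,\qquad |\nu\rangle\otimes m\;\longmapsto\; \bigl[|\nu\rangle\otimes e^{-i\nu}\otimes m\bigr]\otimes e^{\nu},
\]
extended to the full Fock module by letting the Heisenberg modes of $\pi\subset V$ act on the $V_L$ factor. Note that $(U\otimes V_L)^T=\bigoplus_\nu U_\nu\otimes\pi_{-\nu}$ under the paper's convention, so the sign of $e^{\pm\nu}$ and the $T$-action must be chosen consistently. Bijectivity is immediate from the two decompositions, since each side is $\bigoplus_\nu M_\nu\otimes\pi_\nu$ up to a sign convention.

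\textbf{Step 3 (main obstacle).} The hard part is showing that $\Psi$ is a vertex algebra homomorphism, which requires matching the cocycle and $E^\pm$ factors. In $V$, the product of $|\nu\rangle\otimes m$ with $|\nu'\rangle\otimes m'$ carries the Heisenberg factor $z^{\langle\nu,\nu'\rangle}E^-(\nu,z)$. In the cohomology, the $\pi\otimes\pi^{L^-}$ part contributes $z^{\langle\nu,\nu'\rangle-\langle\nu,\nu'\rangle}=1$, since $L^-$ has the negated form. The new $V_L$ factor then restores $z^{\langle\nu,\nu'\rangle}E^-(\nu,z)$.

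To verify this, I will use the fact that modulo $\langle\tilde J\rangle$ one may replace $J^-$ by $-J$, so the $E^-$ factors of $\pi$ and $\pi^{L^-}$ cancel in cohomology. This is the same mechanism used in the proof of Theorem~\ref{thm:isoalgebra}. The remaining vertex operator on $M$ is then the coset-module intertwining data, and it recombines with the $V_L$ factor to reproduce $Y_V$. Compatibility with $T$ and with the vacuum is routine.
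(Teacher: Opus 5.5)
Your proposal is correct and follows essentially the same route as the paper. Both decompose $V$ into Fock modules tensored with multiplicity spaces, use Lemma~\ref{lem:vanish} to keep only the diagonal summands in degree $0$, and send $|\nu\rangle\otimes m$ to $\bigl[|\nu\rangle\otimes e^{-i\nu}\otimes m\bigr]\otimes e^{\nu}$, extended by letting the Heisenberg modes act on the $V_L$ factor. The homomorphism property is then checked by the same $z$-power and $E^-$ cancellation modulo $\langle\tilde J\rangle$ as in Theorem~\ref{thm:isoalgebra}.

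One point is left implicit, in the paper as well. The product of the cocycles of $V_L$ and $V_L^-$ may force you to rescale $\Psi$ on each $L$-graded piece. This is always possible, because that product is a symmetric $2$-cocycle on the free abelian group $L$ and hence a coboundary.
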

\begin{proof} Assume that $V\cong \oplus_{\lambda} C_\lambda \otimes \pi_{\lambda}$ for certain multiplicity spaces $C_{\lambda}$.
it suffices to show that for $v_{\lambda}\otimes w_{\lambda}\in C_{\lambda} \otimes \pi_{\lambda}$ the linear map given by
\begin{align*}
\Phi:\; V& \longrightarrow (H^{\mathrm{rel},0}(V \otimes V_L^-)\otimes V_L)^T \\
v_{\lambda}\otimes w_{\lambda} &\longmapsto 
  \big[\, v_\lambda \otimes e^{\lambda} \otimes e^{-i\lambda} \big] \otimes w_{\lambda},
\end{align*}
is a vertex algebra isomorphism, which can be proved by a mimic of the proof of Theorem \ref{thm:isoalgebra}
\end{proof}

\begin{defn}
Let $\mu \in \mathfrak{h}^*$. 

\begin{enumerate}
\item
We define $\mathcal{C}_1$ to be the category of $L$-graded $V^1$-modules $M$ such that
\[
M = \bigoplus_{\lambda \in L} M_\lambda.
\]

\item
We define $\mathcal{C}_2^\mu$ to be the category of $V^2$-modules $N$ satisfying the following conditions:
\begin{itemize}
\item $N$ is semisimple as a module over the Heisenberg vertex subalgebra $\pi$;
\item every irreducible $\pi$-submodule of $N$ is isomorphic to a Fock module $\pi_\nu$ for some $\nu \in \mu + L$.
\end{itemize}
Equivalently, $N$ admits a decomposition
\[
N \cong \bigoplus_{\lambda \in L} N_\lambda \otimes \pi_{-\lambda+\mu},
\]
for some vector spaces $N_\lambda$. In particular, all $h_{(0)}$-eigenvalues on $N$ lie in the coset $\mu + L \subset \mathfrak{h}^*$.
\end{enumerate}
\end{defn}

\begin{prop}
Let $M$ be an $L$-graded $V^1$-module. For  any $\mu\in L^*$,
$M \otimes V_{\mu+L}$
is a $T$-equivariant $V^1\otimes V_L$-module. In particular,
$(M \otimes V_{\mu+L})^T$
carries a natural $V^2 $-module structure.
\end{prop}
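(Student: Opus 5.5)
The plan has two steps. First, show that $M\otimes V_{\mu+L}$ is a $V^1\otimes V_L$-module on which $T$ acts compatibly. Then restrict to $T$-invariants and check that $V^2$ preserves them.

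\emph{Module structure.} For $\mu\in L^*$, the space $V_{\mu+L}=\bigoplus_{\lambda\in L}\pi_{\mu+\lambda}$ is the standard module of the lattice vertex algebra $V_L$ attached to the coset $\mu+L$ of $L^*/L$. The pairing $\langle \lambda,\mu+\lambda'\rangle$ is integral for $\lambda,\lambda'\in L$, so the vertex operators $Y(e^\lambda,z)$ act on it by integral powers of $z$ and give a genuine (untwisted) module. Hence $M\otimes V_{\mu+L}$ is a $V^1\otimes V_L$-module via the tensor product of vertex operators. (One must fix cocycle conventions, but this is standard.)

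\emph{Equivariance.} Define the $T$-action on $M\otimes V_{\mu+L}$ by
\[
t\cdot(m\otimes w)=\langle t,\lambda+\lambda'\rangle\, m\otimes w, \qquad m\in M_\lambda,\; w\in \pi_{\mu+\lambda'} .
\]
We must show that $t\,Y(a,z)\,t^{-1}=Y(t\cdot a,z)$ for $a\in V^1\otimes V_L$. It suffices to check this on homogeneous $a=v\otimes u$ with $v\in V^1_\alpha$ and $u\in\pi_\beta$, $\alpha,\beta\in L$. Since $M$ is $L$-graded, $v_{(n)}M_\lambda\subset M_{\lambda+\alpha}$. In $V_L$, the modes of elements of $\pi_\beta$ send $\pi_{\mu+\lambda'}$ to $\pi_{\mu+\lambda'+\beta}$, because $e^\beta$ shifts the Heisenberg weight by $\beta$ and Heisenberg modes preserve it. So each mode of $a$ multiplies the $T$-weight by $\langle t,\alpha+\beta\rangle$, which is exactly how $t$ acts on $a$. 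This gives $T$-equivariance.

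\emph{Invariants.} Let $N=M\otimes V_{\mu+L}$, and let $a\in V^2=(V^1\otimes V_L)^T$ and $x\in N^T$. By equivariance, $t\cdot(a_{(n)}x)=(t\cdot a)_{(n)}(t\cdot x)=a_{(n)}x$. So $N^T$ is stable under all modes of $V^2$. The module axioms (vacuum, translation, Borcherds identity) hold on $N$ and therefore on the subspace $N^T$. Hence $N^T$ is a $V^2$-module. Explicitly, $N^T\cong\bigoplus_\lambda M_\lambda\otimes\pi_{\mu-\lambda}$, which is consistent with membership in $\mathcal C_2^\mu$.

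The only genuine point to watch is the integrality $\mu\in L^*$, which is what makes $V_{\mu+L}$ an untwisted $V_L$-module. Everything else is bookkeeping of gradings.
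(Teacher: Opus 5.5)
Your proposal is correct and follows the same route as the paper. The paper asserts that $T$ acts compatibly with the module action, $t\cdot Y(a,z)w=Y(t\cdot a,z)(t\cdot w)$, and concludes that $(M\otimes V_{\mu+L})^T$ is $V^2$-stable; you additionally supply details the paper leaves implicit, namely that $\mu\in L^*$ makes $V_{\mu+L}$ an untwisted $V_L$-module and the weight bookkeeping on homogeneous elements $v\otimes u$.
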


\begin{proof}
The action of $T$ on $V^1 \otimes V_L$ is by vertex algebra automorphisms, which is compatible with the $V^1 \otimes V_L$-module structure in the sense that for all $t \in T$, $a \in V^1 \otimes V_L$, and $w \in M\otimes V_{\mu+L}$, one has
\[
t \cdot Y(a,z) w = Y(t\cdot a, z)(t\cdot w).
\]
Hence the invariant subspace $(M \otimes V_{\mu+L})^T$ is stable under the action of $V^2$, and hence inherits a natural structure of $V^2$-module.
\end{proof}

\begin{cor}\label{cor:func}
For $\mu\in L^*$, there is a functor
\[
\mathsf{F}_\mu : \mathcal{C}_1 \to \mathcal{C}_2^\mu,
\quad
M \mapsto (M \otimes V_{\mu+L})^T,
\]
and one has a vector space isomorphism
\[
\mathsf{F}_\mu(M)
\cong
\bigoplus_{\lambda \in L} M_\lambda \otimes \pi_{-\lambda+\mu}.
\]
Similarly there is a functor
\[
G_{\mu }: \mathcal{C}_2^\mu \longrightarrow \mathcal{C}_1
\]
given by
\[
N \longmapsto H^{\mathrm{rel},0}(N \otimes V_{-i\mu+L^-}).
\]
\end{cor}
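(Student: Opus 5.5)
The statement to prove is Corollary \ref{cor:func}: $\mathsf F_\mu$ and $G_\mu$ are well-defined functors landing in the indicated categories, and $\mathsf F_\mu(M)$ has the stated decomposition.

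\textbf{The functor $\mathsf F_\mu$.} The preceding Proposition already shows that $(M\otimes V_{\mu+L})^T$ is a $V^2$-module. Three things remain.

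First, I compute the invariants degree by degree. Write $M=\bigoplus_{\lambda\in L}M_\lambda$ and $V_{\mu+L}=\bigoplus_{\nu\in L}\pi_{\mu+\nu}$, with $t\in T$ acting on $\pi_{\mu+\nu}$ by $\langle t,\nu\rangle$. Then $t$ acts on $M_\lambda\otimes\pi_{\mu+\nu}$ by the character $\langle t,\lambda+\nu\rangle$. Since $T=\mathrm{Hom}(L,\mathbb C^*)$ separates the points of $L$, this character is trivial exactly when $\nu=-\lambda$. Hence
\[
\mathsf F_\mu(M)\cong\bigoplus_{\lambda\in L}M_\lambda\otimes\pi_{-\lambda+\mu}.
\]

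Second, the Heisenberg subalgebra $\pi\subset V^2$ is the $\lambda=0$ piece $V^1_0\otimes\pi_0\supset \mathbf 1\otimes\pi_0$. It acts only on the second tensor factor, so $\mathsf F_\mu(M)$ is a direct sum of Fock modules $\pi_{-\lambda+\mu}$ with $-\lambda+\mu\in\mu+L$. Therefore $\mathsf F_\mu(M)\in\mathcal C_2^\mu$.

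Third, for functoriality, an $L$-graded morphism $f\colon M\to M'$ gives $f\otimes\mathrm{id}$. This map is $T$-equivariant and a $V^1\otimes V_L$-module map, so it restricts to the invariants. Compatibility with composition and identities is clear.

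The hypothesis $\mu\in L^*$ enters here. It ensures that $V_{\mu+L}$ is an untwisted $V_L$-module, since $\langle\mu,\lambda\rangle\in\mathbb Z$ makes the monodromy of $Y(e^\lambda,z)$ on $\pi_{\mu+\nu}$ integral.

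\textbf{The functor $G_\mu$.} Let $N\in\mathcal C_2^\mu$. By the same integrality, $V_{-i\mu+L^-}$ is a genuine $V_L^-$-module, so $N\otimes V_{-i\mu+L^-}$ is a $V^2\otimes V_L^-$-module. It contains the degenerate Heisenberg field $\tilde J=J+J^-$, and $H^{\mathrm{rel},0}(N\otimes V_{-i\mu+L^-})$ is a module over $H^{\mathrm{rel},0}(V^2\otimes V_L^-)$, as recalled in Section \ref{sec:BRST}. By Theorem \ref{thm:isoalgebra}, this algebra is $V^1$.

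To check that the result lies in $\mathcal C_1$, decompose as in Proposition \ref{prop:relcoh}:
\[
N\otimes V_{-i\mu+L^-}=\bigoplus_{\lambda,\nu\in L}N_\lambda\otimes\pi^{L^+}_{-\lambda+\mu}\otimes\pi^{L^-}_{-i\mu+i\nu}.
\]
Using the normalization of Lemma \ref{lem:vanish}, in which $\pi^{L^-}_{i\nu}$ has weight $\nu$ for $J^-$, Lemma \ref{lem:vanish} kills every summand except those with $\nu=\lambda$. Hence
\[
G_\mu(N)\cong\bigoplus_{\lambda\in L}N_\lambda,
\]
which is naturally $L$-graded by $\lambda$. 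The action of $V^1_\kappa$, realized through the classes $[v_\kappa\otimes e^{-\kappa}\otimes e^{i\kappa}]$, shifts $\lambda$ by $\kappa$, so $G_\mu(N)$ is an $L$-graded $V^1$-module. A morphism $N\to N'$ induces a cochain map after tensoring with $\mathrm{id}$, hence a map on cohomology.

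\textbf{Main obstacle.} I expect the hardest step to be the sign and normalization bookkeeping: matching the $\mu$-shift in $V_{\mu+L}$ against $V_{-i\mu+L^-}$ so that the surviving classes in Lemma \ref{lem:vanish} are exactly the diagonal ones. I would fix the conventions by first checking the case $N=\mathsf F_\mu(M)$, where the computation should return $G_\mu(N)\cong M$. That check also sets up the equivalence proved later.
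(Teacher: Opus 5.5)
Your proposal is correct and matches the argument the paper leaves implicit. The preceding Proposition gives the $V^2$-module structure on the $T$-invariants, the $T$-weight computation gives the decomposition, and functoriality of $H^{\mathrm{rel},0}$ combined with Theorem \ref{thm:isoalgebra} gives $G_\mu$. Your extra check via Lemma \ref{lem:vanish} that $G_\mu(N)\cong\bigoplus_{\lambda} N_\lambda$ is $L$-graded, with the sign convention for $J^-$ fixed as you describe, is the same computation the paper carries out afterwards in Theorem \ref{thm:iso}.
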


By Theorem  \ref{thm:isoalgebra}, when $\mu\in L$, we have isomorphisms of graded vertex algebras $G_\mu\circ F_\mu (V^1) \cong V^1$ and $F_\mu \circ G_\mu (V^2) \cong V^2$.
Similarly, we have the following  isomorphisms for $L$-graded vertex algebra modules.
\begin{theorem} \label{thm:iso}
For any $\mu\in L^*$, $M \in \mathcal{C}_1$, there is an isomorphism
\begin{equation} \label{eq:C1}
M \;\cong\;G_\mu\circ F_\mu (M) =
H^{\mathrm{rel},0}\!\big((M \otimes  V_{\mu+L})^T \otimes  V_{-i \mu+L^-} \big)
\end{equation}
of $L$-graded $V^1$-modules. 
Similarly, for any $N \in \mathcal{C}_2^\mu$, there is an isomorphism
\begin{equation}\label{eq:C2}
N \;\cong\;F_\mu \circ G_\mu (N)=
\big( H^{\mathrm{rel},0}(N \otimes V_{- i \mu+L^-})  \otimes V_{\mu+L} \big)^T 
\end{equation}
of $L$-graded $V^2$-modules.
\end{theorem}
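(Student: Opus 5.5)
We follow the proof of Theorem~\ref{thm:isoalgebra}: write down explicit maps at the level of graded components, and compute the relative cohomology one Fock-module piece at a time using Lemma~\ref{lem:vanish}.

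\textbf{The isomorphism \eqref{eq:C1}.} Start from $M\in\mathcal C_1$ and the vector space decomposition $\mathsf F_\mu(M)\cong\bigoplus_{\lambda\in L}M_\lambda\otimes\pi_{-\lambda+\mu}$ of Corollary~\ref{cor:func}. Tensoring with $V_{-i\mu+L^-}=\bigoplus_{\nu\in L}\pi^{L^-}_{-i\mu+i\nu}$ gives
\[
\mathsf F_\mu(M)\otimes V_{-i\mu+L^-}\cong\bigoplus_{\lambda,\nu\in L}M_\lambda\otimes\pi^{L^+}_{-\lambda+\mu}\otimes\pi^{L^-}_{-i\mu+i\nu}
\]
as a module over $\pi^{L^+}\otimes\pi^{L^-}$. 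The differential and the relative constraints involve only $\tilde J$ and the $bc$-system, so they act trivially on the multiplicity spaces $M_\lambda$.

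We then apply Lemma~\ref{lem:vanish} summand by summand. We read it in the natural normalization, in which $\pi^{L^-}_{i\nu}$ has Heisenberg weight $-\nu$ relative to $J$, so that $\tilde J_{(0)}$ has total weight zero. With this reading, the only surviving summands are those with $\nu=\lambda$. Each contributes a copy of $M_\lambda$ concentrated in degree $0$, represented by the classes $[\,m\otimes e^{-\lambda+\mu}\otimes e^{-i\mu+i\lambda}\,]$.

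This gives a linear isomorphism
\[
\Psi:M\to G_\mu\mathsf F_\mu(M),\qquad m_\lambda\mapsto[\,m_\lambda\otimes e^{-\lambda+\mu}\otimes e^{-i\mu+i\lambda}\,].
\]
It respects the $L$-grading by construction. The main work is to show that $\Psi$ intertwines the $V^1$-action. Under Theorem~\ref{thm:isoalgebra}, $V^1$ acts through the classes $[v_\alpha\otimes e^{-\alpha}\otimes e^{i\alpha}]$. The corresponding modes acting on the representative of $\Psi(m_\lambda)$ are
\[
\mathrm{Res}_z\,z^n\,Y(v_\alpha,z)m_\lambda\otimes Y(e^{-\alpha},z)e^{-\lambda+\mu}\otimes Y(e^{i\alpha},z)e^{-i\mu+i\lambda}.
\]
The lattice vertex operators $E^\pm$ and the cocycle factors produce $z$-powers $z^{\langle-\alpha,-\lambda+\mu\rangle}$ and $z^{\langle i\alpha,-i\mu+i\lambda\rangle}$. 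In the pairing on $L\oplus L^-$ these exponents cancel, as do the cocycle signs, because the two lattices have opposite forms. This cancellation is the point where $\mu\in L^*$ is needed: it makes $V_{\mu+L}$ an honest (untwisted) module, so no fractional powers of $z$ appear. After the cancellation, the remaining Heisenberg descendants lie in the non-top part of $\pi^{L^+}_{\beta}\otimes\pi^{L^-}_{-\beta}$, which is exact by Lemma~\ref{lem:vanish}, exactly as in the proof of Theorem~\ref{thm:isoalgebra}. Hence $\Psi(v_{\alpha(n)}m)=v_{\alpha(n)}\Psi(m)$, and $\Psi$ commutes with translation. We expect this matching of the $z$-exponents to be the main obstacle.

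\textbf{The isomorphism \eqref{eq:C2}.} Let $N\in\mathcal C_2^\mu$ with $N\cong\bigoplus_\lambda N_\lambda\otimes\pi_{-\lambda+\mu}$. Lemma~\ref{lem:vanish}, applied in the same way, gives
\[
G_\mu(N)\cong\bigoplus_\lambda N_\lambda\otimes\mathbb C[\,e^{-\lambda+\mu}\otimes e^{-i\mu+i\lambda}\,]
\]
as an $L$-graded $V^1$-module with $G_\mu(N)_\lambda=N_\lambda$. Applying $\mathsf F_\mu$ then yields $\bigoplus_\lambda N_\lambda\otimes\pi_{-\lambda+\mu}$ as a vector space. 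Following Theorem~\ref{thm:isoalgebra2}, we define
\[
n_\lambda\otimes w\longmapsto[\,n_\lambda\otimes e^{\lambda-\mu}\otimes e^{i\mu-i\lambda}\,]\otimes w,\qquad w\in\pi_{-\lambda+\mu},
\]
where each $N_\lambda$ is identified with the top component of the corresponding Fock summand. The same residue computation shows that this map is a $V^2$-module isomorphism. Compatibility with the generators coming from $\pi\subset V^2$ is automatic, since $\pi$ acts only on the last tensor factor.

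Finally, both isomorphisms are given by formulas natural in the module, so they also show that $\mathsf F_\mu$ and $G_\mu$ are mutually inverse on morphisms.
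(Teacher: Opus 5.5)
Your proposal follows the paper's proof, with more detail. The paper uses exactly your map $m_\lambda\mapsto[\,m_\lambda\otimes e^{-\lambda+\mu}\otimes e^{i\lambda-i\mu}\,]$, justifies it by Lemma~\ref{lem:vanish} and the residue computation of Theorem~\ref{thm:isoalgebra}, and handles \eqref{eq:C2} with the analogous map modeled on Theorem~\ref{thm:isoalgebra2}.

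There are two small slips to fix. First, in your map for \eqref{eq:C2} the exponents should be $e^{-\lambda+\mu}\otimes e^{i\lambda-i\mu}$, to match your own description of $G_\mu(N)$. Second, taken literally, your stated normalization ($\pi^{L^-}_{i\nu}$ of weight $-\nu$) would select $\nu=2\mu-\lambda$, which need not lie in $L$, rather than $\nu=\lambda$. The convention you actually use, and need, is the one implicit in Theorem~\ref{thm:isoalgebra}, where $\pi^{L^+}_{\beta}\otimes\pi^{L^-}_{-i\beta}$ carries the surviving class.
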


\begin{proof}
 We define a linear map
\begin{align*}
\Phi_M:\; 
M &\longrightarrow H^{\mathrm{rel},0}\!\big((M \otimes V_{\mu+L})^T \otimes V_{-i\mu+L^-} \big),\\
m_\lambda 
&\longmapsto 
\big[\, m_\lambda  
   \otimes   e^{-\lambda+\mu}
   \otimes  e^{i\lambda-i\mu} \,\big].
\end{align*}
which is an isomorphism of $L$-graded $V^1$-modules similar to the proof of Theorem \ref{thm:isoalgebra}.

 By definition, $N=\oplus_{\lambda\in L}  N_{\lambda}\otimes \pi_{-\lambda+\mu}$, where $N_{\lambda} $ is defined to be the  multiplicity of $\pi_{-\lambda+\mu}$ in $N_{\lambda}$. 
Then the isomorphism in~\eqref{eq:C2} can be proved similarly by constructing the map
\begin{align*}
\Psi_N:\;
N &\longrightarrow 
\big( H^{\mathrm{rel},0}(N \otimes V_{-i\mu+L^-}) \otimes V_{\mu+L} \big)^T,\\
v_\lambda \otimes s_{-\lambda+\mu} 
&\longmapsto 
\big[\, v_\lambda \otimes e^{\lambda+\mu}
   \otimes  e^{-i \lambda-i\mu} \,\big]
   \otimes  s_{-\lambda+\mu},
\end{align*}
which is an  isomorphism of $L$-graded $V^2$-modules. 
\end{proof}

\begin{theorem} \label{thm:equivalence}
For $\mu\in L^*$, the functors $F_\mu$ and $G_\mu$ form mutually inverse functors, namely there exists natural equivalences of functors
\begin{equation}
G_\mu \circ F_\mu \;\cong\; \mathrm{Id}_{\mathcal{C}_1}, 
\qquad
F_\mu \circ G_\mu \;\cong\; \mathrm{Id}_{\mathcal{C}_2^\mu}.
\end{equation}
 In particular, we have an equivalence of $\cC_1$ and $\cC_2^\mu$ as abelian categories.
\end{theorem}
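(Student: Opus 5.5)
The plan is to upgrade the object-wise isomorphisms of Theorem \ref{thm:iso} to natural isomorphisms. I would first make both functors explicit on morphisms. For an $L$-graded morphism $f:M\to M'$ in $\cC_1$, set $F_\mu(f)=(f\otimes \mathrm{id})|_{(M\otimes V_{\mu+L})^T}$. This is well defined because $f$ preserves the $L$-grading and therefore commutes with the $T$-action. For a morphism $g:N\to N'$ in $\cC_2^\mu$, set $G_\mu(g)=H^{\mathrm{rel},0}(g\otimes\mathrm{id})$. This is well defined because $g\otimes \mathrm{id}$ commutes with $d_0$, with $\tilde J_{(0)}$ and with $b_0$, since $g$ is a $V^2$-module map and so commutes with the Heisenberg field $h\in\pi\subset V^2$. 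Functoriality of both is then immediate.

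Next I would check that the maps $\Phi_M$ and $\Psi_N$ constructed in the proof of Theorem \ref{thm:iso} are natural. For $f:M\to M'$ and $m_\lambda\in M_\lambda$,
\[
G_\mu F_\mu(f)\bigl(\Phi_M(m_\lambda)\bigr)=\bigl[f(m_\lambda)\otimes e^{-\lambda+\mu}\otimes e^{i\lambda-i\mu}\bigr]=\Phi_{M'}(f(m_\lambda)),
\]
because $f(m_\lambda)\in M'_\lambda$. For $g:N\to N'$, the defining property of a morphism in $\cC_2^\mu$ is that $g$ commutes with $\pi$. Hence $g$ maps each isotypic component $N_\lambda\otimes\pi_{-\lambda+\mu}$ to $N'_\lambda\otimes\pi_{-\lambda+\mu}$ in the form $g_\lambda\otimes\mathrm{id}$, where $g_\lambda$ is the induced map on multiplicity spaces (Schur's lemma for the Fock modules $\pi_\nu$). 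Using this form, the naturality square for $\Psi$ commutes by the same one-line computation.

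The step needing care is identifying the multiplicity spaces canonically. Lemma \ref{lem:vanish} gives
\[
H^{\mathrm{rel},0}(N\otimes V_{-i\mu+L^-})\cong\bigoplus_\lambda N_\lambda\otimes\mathbb C\bigl[|{-\lambda+\mu}\rangle\otimes|\lambda-\mu\rangle\bigr],
\]
and the isomorphism must be checked to be functorial in $N$. I would do this through Proposition \ref{prop:relcoh}-style reasoning: cohomology commutes with direct sums, and $d_0$ acts only on the $\pi\otimes\pi^{-}\otimes\mathcal E$ factors. Consequently $g\otimes\mathrm{id}$ acts on cohomology as $g_\lambda\otimes\mathrm{id}$, and no choice of representatives intervenes. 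This is the main obstacle, since it is the only point where naturality is not purely formal.

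With natural isomorphisms $\Phi:\mathrm{Id}_{\cC_1}\Rightarrow G_\mu F_\mu$ and $\Psi:\mathrm{Id}_{\cC_2^\mu}\Rightarrow F_\mu G_\mu$ in hand, $F_\mu$ and $G_\mu$ are quasi-inverse equivalences. Any equivalence between abelian categories is automatically additive and exact, so $\cC_1\simeq\cC_2^\mu$ as abelian categories. As an optional consistency check, exactness of $F_\mu$ can also be seen directly: on underlying spaces $F_\mu$ is the exact functor $M\mapsto\bigoplus_\lambda M_\lambda\otimes\pi_{-\lambda+\mu}$.
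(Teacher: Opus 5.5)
Your proposal is correct and follows the same route as the paper. The paper likewise deduces the theorem from the object-wise isomorphisms $\Phi_M$ and $\Psi_N$ of Theorem~\ref{thm:iso}, by tracing elements around the two naturality squares; your extra steps (writing a morphism $g$ in $\cC_2^\mu$ as $\bigoplus_\lambda g_\lambda\otimes\mathrm{id}$ via Schur's lemma for Fock modules, and checking that the identification of $H^{\mathrm{rel},0}(N\otimes V_{-i\mu+L^-})$ with $\bigoplus_\lambda N_\lambda$ is functorial) supply exactly the details the paper leaves implicit.
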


\begin{proof}
By Theorem~\ref{thm:iso}, it suffices to show that the functors constructed therein are natural. 
This follows by tracing the elements in the commutative diagrams below:
\[
\begin{tikzcd}
M \arrow[r, "\Phi_M"] \arrow[d, "f"'] 
  & G_\mu \circ F_\mu(M)
      \arrow[d, "G_\mu \circ F_\mu(f)"] \\
M' \arrow[r, "\Phi_{M'}"'] 
  & G_\mu \circ F_\mu(M')
\end{tikzcd}
\qquad\qquad
\begin{tikzcd}
N \arrow[r, "\Psi_N"] \arrow[d, "g"'] 
  & F_\mu \circ G_\mu (N)
      \arrow[d, "F_\mu \circ G_\mu(g)"] \\
N' \arrow[r, "\Psi_{N'}"'] 
  & F_\mu \circ G_\mu (N').
\end{tikzcd}
\]
The commutativity of these diagrams shows that the isomorphisms $\Phi_M$ and $\Psi_N$ are natural in $M$ and $N$, respectively, hence establishing the claimed equivalences of functors.
\end{proof}

\section{Kazama-Suzuki duality and quasi-classical limit}\label{sec:limits}

\subsection{Kazama-Suzuki duality at the generic level} \label{sec:KSgen}

 Let $\mathfrak{g}=\mathfrak{sl}_n$ or $\mathfrak{g}=\mathfrak{so}_{2n+1}$, and let $f=f_{\mathrm{sub}}$ be a subregular nilpotent element. 
The subregular $W$-algebra $W^k(\mathfrak{g},f_{\mathrm{sub}})$ is strongly and freely generated by a Heisenberg field $J$, two even fields $G^\pm$, and additional even fields $W_i$ as follows:
\begin{itemize}
    \item for $\mathfrak{g}=\mathfrak{sl}_n$, the fields $G^\pm$ have conformal weight $\frac{n}{2}$, and there is one field
    \[
    W_2,W_3,\dots,W_{n-1}
    \]
    of each conformal weight $2,3,\dots,n-1$;

    \item for $\mathfrak{g}=\mathfrak{so}_{2n+1}$, the fields $G^\pm$ have conformal weight $n$, and there is one field
    \[
    W_2,W_4,\dots,W_{2n-2}
    \]
    of each conformal weight $2,4,\dots,2n-2$.
\end{itemize}
All generators are neutral with respect to the Heisenberg field $J$, except for $G^\pm$, which have $J$-charge $\pm1$. 
This determines the normalization of $J$. 
Its level is
\[
-\frac{n-(n-1)(k+n)}{n}
\]
for $\mathfrak{g}=\mathfrak{sl}_n$ \cite[Lemma 3.4]{CL1}, and
\[
k+2n-2
\]
for $\mathfrak{g}=\mathfrak{so}_{2n+1}$ \cite[Section 3.3 (3)]{CL2}. 
In particular, at the critical level $k+n=0$ for $\mathfrak{sl}_n$, respectively $k+2n-1=0$ for $\mathfrak{so}_{2n+1}$, the field $J$ has level $-1$.

For $\gg=\mathfrak{sl}_{n|1}$ or $\mathfrak{osp}_{2|2n}$, the principal $W$-superalgebra $W^k(\mathfrak{g})$ is strongly generated by a Heisenberg field $J$, two odd fields $G^\pm$, and additional even fields $W_i$ as follows:
\begin{itemize}
    \item for $\mathfrak{g}=\mathfrak{sl}_{n|1}$, the  fields $G^\pm$ have conformal weight $\frac{n+1}{2}$, and there is one field
    \[
    W_2,W_3,\dots,W_n
    \]
of each  conformal weights $2,3,\dots,n$;

    \item for $\mathfrak{g}=\mathfrak{osp}_{2|2n}$, the  fields $G^\pm$ have conformal weight $n+\frac12$, and there is one  field
    \[
    W_2,W_4,\dots,W_{2n}
    \]
of each conformal weights $2,4,\dots,2n$.
\end{itemize}
Again, all generators are neutral with respect to $J$, except for $G^\pm$, which have $J$-charge $\pm1$. 
This fixes the normalization of $J$, whose level is
\[
1+\frac{n(k+n-1)}{1-n}
\]
for $\mathfrak{g}=\mathfrak{sl}_{n|1}$ \cite{CL1}. 
In particular, when $n=2$, the algebra $W^k(\mathfrak{sl}_{2|1})$ coincides with the $N=2$ superconformal vertex algebra.

We recall a Kazama--Suzuki type coset duality for 
\[
(\mathfrak g_1,\mathfrak g_2)
=
(\mathfrak {sl}_{n},\mathfrak {sl}_{n|1})
\quad \text{or} \quad
(\mathfrak{so}_{2n+1},\mathfrak {osp}_{2|2n}).
\]
Let $h_i^\vee$ be the dual Coxeter number of $\mathfrak g_i$, and let $r$ denote the lacity of $\mathfrak g_1$. The relation between the corresponding levels $k_1$ and $k_2$ is given by
\begin{equation}\label{rel:level}
r\,(k_1+h_1^\vee)(k_2+h_2^\vee)=1.
\end{equation}
\begin{theorem}\cite{CGN} \label{thm:genericlevel}
 \label{thm:KZisom}
Let $(\mathfrak g_1,\mathfrak g_2)=(\mathfrak {sl}_{n},\mathfrak {sl}_{n|1})$ or $(\mathfrak{so}_{2n+1},\mathfrak {osp}_{2|2n})$. 
Let $k_1 \in \mathbb C \setminus \{-h_1^\vee\}$ and 
$k_2 \in \mathbb C \setminus \{-h_2^\vee\}$ be generic levels satisfying \eqref{rel:level}. 
Then there are mutually inverse Kazama-Suzuki type coset isomorphisms
\begin{align*}
W^{k_1}(\mathfrak g_1,f_{\mathrm{sub}})
&\cong 
\mathrm{Com}\!\left(\pi_{\widetilde H_2},\, 
W^{k_2}(\mathfrak g_2)\otimes V_{\mathbb Z \sqrt{-1}}\right),\\
W^{k_2}(\mathfrak g_2)
&\cong 
\mathrm{Com}\!\left(\pi_{\widetilde H_1},\, 
W^{k_1}(\mathfrak g_1,f_{\mathrm{sub}})\otimes V_{\mathbb Z}\right),
\end{align*}
and also for the corresponding simple quotients
\begin{align*}
W_{k_1}(\mathfrak g_1,f_{\mathrm{sub}})
&\cong 
\mathrm{Com}\!\left(\pi_{\widetilde H_2},\, 
W_{k_2}(\mathfrak g_2)\otimes V_{\mathbb Z \sqrt{-1}}\right),\\
W_{k_2}(\mathfrak g_2)
&\cong 
\mathrm{Com}\!\left(\pi_{\widetilde H_1},\, 
W_{k_1}(\mathfrak g_1,f_{\mathrm{sub}})\otimes V_{\mathbb Z}\right).
\end{align*}

\end{theorem}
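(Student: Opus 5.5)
We would deduce both pairs of isomorphisms from the Feigin--Semikhatov coset duality of \cite{CGN} (resp.\ \cite{CL2} in type $B$), using the Heisenberg decomposition of both algebras and a simple-current-extension argument. The duality identifies the Heisenberg cosets
\[
\mathcal C:=\mathrm{Com}\bigl(\pi^J, W^{k_1}(\mathfrak g_1,f_{\mathrm{sub}})\bigr)\cong \mathrm{Com}\bigl(\pi^J, W^{k_2}(\mathfrak g_2)\bigr)
\]
for generic $k_1,k_2$ satisfying \eqref{rel:level}.

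First, we decompose each algebra as a $\pi^J\otimes\mathcal C$-module. At generic level $J$ has nonzero level $\kappa_i$, and the $J$-charges are integral since $G^\pm$ have charge $\pm1$ and all other generators are neutral. Hence
\[
W^{k_1}(\mathfrak g_1,f_{\mathrm{sub}})\cong\bigoplus_{m\in\mathbb Z}\mathcal C^{(1)}_m\otimes\pi^J_{m/\kappa_1},\qquad
W^{k_2}(\mathfrak g_2)\cong\bigoplus_{m\in\mathbb Z}\mathcal C^{(2)}_m\otimes\pi^J_{m/\kappa_2},
\]
where each $\mathcal C^{(i)}_m$ is a simple current for $\mathcal C$, with $\mathcal C^{(i)}_{\pm1}$ generated by the coset parts of $G^\pm$.

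Next, we tensor $W^{k_2}(\mathfrak g_2)$ with $V_{\mathbb Z\sqrt{-1}}$. Let $\widetilde H_2$ be the combination of $J$ and the lattice Heisenberg field orthogonal to the diagonal field under which $G^\pm\otimes e^{\mp\sqrt{-1}x}$ are neutral. Computing $\mathrm{Com}(\pi_{\widetilde H_2},-)$ then yields $\bigoplus_m\mathcal C^{(2)}_m\otimes\pi^{J'}_{m/\kappa'}$ for a new Heisenberg field $J'$. A direct computation with the formulas for the levels of $J$, using \eqref{rel:level}, should show $\kappa'=\kappa_1$, and that the parity of $G^\pm\otimes e^{\mp\sqrt{-1}x}$ is even, matching that of $G^\pm$ on the $\mathfrak g_1$ side. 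The symmetric computation with $V_{\mathbb Z}$ handles the second isomorphism.

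The main step, and the expected obstacle, is matching the simple currents: we need $\mathcal C^{(1)}_m\cong\mathcal C^{(2)}_m$ as $\mathcal C$-modules, compatibly with fusion $\mathcal C_1^{\boxtimes m}\cong\mathcal C_m$, not merely an isomorphism of the charge-zero parts $\mathcal C$. We would obtain this from the construction in \cite{CGN}. There the coset identification comes from a free-field (Wakimoto/screening) realization in which the coset parts of $G^\pm$ on both sides are identified as the same $\mathcal C$-modules. Alternatively, one can characterize $\mathcal C^{(i)}_{\pm1}$ as the unique simple $\mathcal C$-modules with prescribed lowest conformal weight and fusion with $\mathcal C^{(i)}_{\mp1}$ at generic level. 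With the currents matched, uniqueness of simple current extensions (generic level, so all conformal weights are generic and the braiding forces the extension structure) gives the universal-level isomorphisms.

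For the simple quotients, we would use that at generic level the Heisenberg coset of the simple quotient is the simple quotient of $\mathcal C$, and that a simple current extension of a simple algebra by simple currents is simple. Consequently, both $W_{k_1}(\mathfrak g_1,f_{\mathrm{sub}})$ and $\mathrm{Com}(\pi_{\widetilde H_2},W_{k_2}(\mathfrak g_2)\otimes V_{\mathbb Z\sqrt{-1}})$ are the simple quotients of the isomorphic universal objects, hence isomorphic. The same argument applies to the second isomorphism.
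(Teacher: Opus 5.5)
The paper does not prove this statement; it is quoted from \cite{CGN}. There, both $W^{k_1}(\mathfrak g_1,f_{\mathrm{sub}})$ and $W^{k_2}(\mathfrak g_2)$ are realized at generic level inside free field algebras as joint kernels of screening operators. One then checks directly that tensoring with the lattice vertex algebra and passing to the Heisenberg coset carries the screenings of one side to those of the other. Your strategy is different: you start from the Heisenberg-coset duality and rebuild both algebras as $\mathbb Z$-graded extensions of $\pi\otimes\mathcal C$. Its bookkeeping is sound. With $\psi=k_1+n$ one has $\kappa_2=1-\frac{n}{(n-1)\psi}$. The field orthogonal to $\widetilde H_2$, normalized so that $G^\pm\otimes e^{\mp\sqrt{-1}x}$ have charge $\pm1$, then has level $\kappa_2/(1-\kappa_2)=-\frac{n-(n-1)\psi}{n}=\kappa_1$, and parity and the conformal weight $\frac n2$ match as well.

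Two smaller remarks. You should take the coset duality from \cite{CL1, CL2} rather than \cite{CGN}, since in \cite{CGN} it comes out of the same free-field argument that proves the statement you are after. And at generic levels the universal algebras are simple, so the claim about simple quotients is immediate from the universal one; your simple-current argument for it is fine but unnecessary.

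The genuine gap is the step you yourself flag as the main obstacle, and it is the whole content of the theorem. An isomorphism of the charge-zero cosets says nothing about whether the charge-$m$ multiplicity spaces $\mathcal C^{(1)}_m$ and $\mathcal C^{(2)}_m$ are isomorphic $\mathcal C$-modules. Nor does it say anything about the intertwining operators that glue them into a vertex algebra. Neither proposed fix closes this.

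\emph{Option (i)} defeats the purpose. The free-field realization of \cite{CGN} already identifies the two algebras directly, so once you invoke it the simple-current layer is redundant.

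\emph{Option (ii)} is unsupported. $\mathcal C$ is strongly generated by fields of several conformal weights beyond the Virasoro field. A simple positive-energy $\mathcal C$-module is determined by the action of Zhu's algebra on its top level, i.e.\ by the top-level action of the zero-modes of all these generators, not by its lowest conformal weight alone. Moreover, ``fusion with $\mathcal C_{\mp1}$'' presupposes a tensor product.

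This points to a second missing ingredient. ``Uniqueness of simple current extensions'' is a statement inside a braided tensor category of $\mathcal C$-modules containing all the $\mathcal C_m$. Such a category is not available off the shelf for these non-rational, non-$C_2$-cofinite generic-level cosets, and your proposal does not supply it; genericity of conformal weights does not replace it. To complete your route you would need two things. First, compute the top-level zero-mode data of $\mathcal C^{(i)}_{\pm1}$ on both sides, which requires explicit coset generators. Second, provide the tensor-categorical input. Alternatively, as in \cite{CGN}, bypass module matching entirely by working with free fields.
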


Let \(\pi\) and \(\pi^{-}\) denote the Heisenberg vertex algebras of levels \(1\) and \(-1\), respectively. 
For any Heisenberg element \(H\), we write \(\pi^H\) for the Heisenberg vertex algebra generated by \(H\).
The \(W\)-superalgebras
$
W^{k_1}(\mathfrak g_1,f_{\mathrm{sub}})$
 and 
 $
W^{k_2}(\mathfrak g_2)
$
contain Heisenberg vertex subalgebras \(\pi^{H^1}\) and \(\pi^{H^2}\), respectively, where \(H^1\) and \(H^2\) are Heisenberg elements of levels
\[
-\frac{n-(n-1)(k_1+n)}{n}
\quad \text{and} \quad
1+\frac{n(k_2+n-1)}{1-n},
\]
respectively.
By \cite{CGNS}, one has the decomposition
\begin{equation} \label{eq:decom}
 V_{\mathbb Z}\otimes \pi^{-}
\cong
\bigoplus_{\lambda \in \mathbb Z}
\pi_{\lambda}^{\sqrt{-1}H^1}
\otimes
\pi_{-\lambda}^{H^2},
\end{equation}
as a
$\pi^{\sqrt{-1}H^1}\otimes \pi^{H^2}$-module.
Since $W^{k_1}(\gg_1,f_{\text{sub}}) \otimes \pi^{\sqrt{-1} H^1}$ contains a degenerate Heisenberg element, hence one can consider the relative semi-infinite cohomology of $W^{k_1}(\mathfrak g_1,f_{\mathrm{sub}})
\otimes  V_{\mathbb Z}
\otimes \pi^{-}$ with respect to $\mathfrak{gl}_1$.
Using the decomposition (\ref{eq:decom}) and Lemma \ref{lem:vanish},  we deduce the isomorphisms 
\[
H^{\mathrm{rel},\, p}\!\left(
W^{k_1}(\mathfrak g_1,f_{\mathrm{sub}})
\otimes V_{\mathbb Z}
\otimes \pi^{-}
\right)
\cong
\delta_{p,0}\, W^{k_2}(\mathfrak g_2)\]
of vertex algebra modules over $\pi^{\sqrt{-1} H^1}\otimes \pi^{H^2}$.
Similarly by decomposing $V_{\sqrt{-1}\mathbb Z} \otimes \pi^+$, one can obtain an isomorphism for reverse direction
and hence reformulate the Kazama--Suzuki type
coset isomorphism in terms of relative semi-infinite cohomology.

\begin{theorem} \cite{CGNS, N} \label{thm:glevel}
There are isomorphisms of vertex superalgebras
\begin{align} \label{eq:glevel}
H^{\mathrm{rel},\, p}\!\left(
W^{k_1}(\mathfrak g_1,f_{\mathrm{sub}})
\otimes  V_{\mathbb Z}
\otimes \pi^{-}
\right)
&\cong
\delta_{p,0}\, W^{k_2}(\mathfrak g_2),
\\  \label{eq:glevel2}
H^{\mathrm{rel},\, p}\!\left(
W^{k_2}(\mathfrak g_2)
\otimes  V_{\sqrt{-1}\mathbb Z}
\otimes \pi^{+}
\right)
&\cong
\delta_{p,0}\, W^{k_1}(\mathfrak g_1,f_{\mathrm{sub}}).
\end{align}

\end{theorem}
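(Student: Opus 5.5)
The statement is the relative-cohomology reformulation of the Kazama--Suzuki coset isomorphisms of Theorem~\ref{thm:KZisom}, and I would prove it by reducing to Proposition~\ref{prop:relcoh}. I treat \eqref{eq:glevel}; \eqref{eq:glevel2} is the same argument with the roles of $(\mathfrak g_1, V_{\mathbb Z}, \pi^-)$ and $(\mathfrak g_2, V_{\sqrt{-1}\mathbb Z}, \pi^+)$ exchanged.

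Set $V:=W^{k_1}(\mathfrak g_1,f_{\mathrm{sub}})\otimes V_{\mathbb Z}\otimes \pi^-$. The first step is to fix the two Heisenberg fields. Take $J:=H^1$ in $W^{k_1}(\mathfrak g_1,f_{\mathrm{sub}})$, and let $J^-$ be the field in $V_{\mathbb Z}\otimes\pi^-$ spanning $\pi^{\sqrt{-1}H^1}$ in the decomposition \eqref{eq:decom}. This $J^-$ has level opposite to that of $H^1$, so $\tilde J=J+J^-$ satisfies $\tilde J(z)\tilde J(w)\sim 0$.

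The second step is to check that $V$ is semisimple over $\pi^{J}\otimes\pi^{J^-}$. For generic $k_1$ the level of $H^1$ is nonzero. The zero mode $H^1_{(0)}$ acts semisimply with integer charges, since $G^\pm$ have charge $\pm1$ and the other generators are neutral. Hence $W^{k_1}$ decomposes as $\bigoplus_{\lambda\in\mathbb Z}\pi^{H^1}_\lambda\otimes C_\lambda$, where $C=\mathrm{Com}(\pi^{H^1},W^{k_1})$. Combined with \eqref{eq:decom}, this gives
\[
V\cong\bigoplus_{\lambda,\nu\in\mathbb Z}\pi^{H^1}_{\lambda}\otimes\pi^{\sqrt{-1}H^1}_{\nu}\otimes\pi^{H^2}_{-\nu}\otimes C_\lambda .
\]

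The third step applies the vanishing. Lemma~\ref{lem:vanish} and the computation in the proof of Proposition~\ref{prop:relcoh} kill all $p\neq 0$. For $p=0$ they keep only the summands with $\nu=-\lambda$, so as a $\pi^{H^2}$-module
\[
H^{\mathrm{rel},0}(V)\cong\bigoplus_{\lambda}\pi^{H^2}_{\lambda}\otimes C_\lambda .
\]
As a vertex algebra, Proposition~\ref{prop:relcoh} identifies $H^{\mathrm{rel},0}(V)$ with $V^{\tilde J[t]}/\langle\tilde J\rangle$. I would then show that this quotient is isomorphic to $\mathrm{Com}(\pi_{\widetilde H_1},W^{k_1}\otimes V_{\mathbb Z})$, where $\widetilde H_1$ is the combination of $H^1$ and the lattice Heisenberg field that commutes with $\pi^{H^2}$. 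The map sends a coset element $x$ to $[x\otimes\mathbf 1]$. It is a vertex algebra homomorphism because the $\pi^-$ factor only enters through $\tilde J$. Comparing the character decompositions above shows it is bijective. Theorem~\ref{thm:KZisom} then identifies the coset with $W^{k_2}(\mathfrak g_2)$.

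The main obstacle is this last identification. I must match the Heisenberg field $\widetilde H_1$ used in the coset of \cite{CGN} with the combination singled out by $\tilde J$. Concretely, this means checking that $H^2$ is orthogonal to $\tilde J$ and that the charge lattices in \eqref{eq:decom} are normalized consistently. If the normalizations do not agree, I would rescale the lattice embedding as in \cite{CGNS} until they do.
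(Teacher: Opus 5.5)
Your proposal is correct and is essentially the paper's argument. The paper also decomposes $V_{\mathbb Z}\otimes\pi^-$ via \eqref{eq:decom}, applies Lemma~\ref{lem:vanish} to kill everything except the diagonal degree-$0$ summands, and thereby reformulates the coset isomorphism of Theorem~\ref{thm:KZisom}, leaving the details to \cite{CGNS}. Your explicit map $x\mapsto[x\otimes\mathbf 1]$ into $V^{\tilde J[t]}/\langle\tilde J\rangle$ and the normalization check you flag are exactly those deferred details. The one step to tighten is bijectivity: equal characters alone do not give injectivity, so check it summand by summand. There the coset's copy of $H^2$ agrees with the one from \eqref{eq:decom} modulo $\tilde J$, and highest-weight vectors map to nonzero classes.
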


We use the standard notation  for the infinite $q$-Pochhammer symbol $(a;q)_{\infty}=\prod_{n=0}^{\infty} (1-aq^n)$.
Consider the vertex superalgebra $\cC:=W^k_1(\mathfrak g_1,f_{\mathrm{sub}}) \otimes V_{\mathbb Z} \otimes \pi^- \otimes \mathcal E$. We identify $V_{\mathbb Z} $ with the $bc$-system with the field generated by $\phi$ and $\psi$.
Let 
\[
L=L^1+L_{V_{\mathbb Z}}+L_{\pi^-}+L_{\mathcal E}\in \cC
\]
be the conformal vector of $\cC$, where $L^1$ is the standard conformal vector in $W^k_1(\mathfrak g_1,f_{\mathrm{sub}})$,  $L_{V_{\mathbb Z}}=\frac{1}{2} (\partial \psi \phi-\psi \partial \phi)$,
$L_{\pi^-}=-\frac12 h_{(-1)}h$, $L_{\mathcal E}=-b\partial c $ are the conformal vectors in $V_{\mathbb Z} $, $\pi^-$, and $\mathcal E$, respectively.
One then compute supercharacter
\[
\operatorname{sch}
:=
\operatorname{str}
\left(
q^{L_0}
y^{H^2_{(0)}}
z^{(H^1+H^3)_{(0)}}
\right)
\]
for both sides of~\eqref{eq:glevel} in the case $p=0$, where $H^i$ for $i=1,2$ are Heisenberg elements of $W^{k_1}(\mathfrak{g}_1,f_{\text{sub}})$ and $W^{k_2}(\mathfrak{g}_2)$ respectively, 
and $H^3$ denotes the Heisenberg element of $\pi^{\sqrt{-1}H^1}$ appearing in the decomposition~\eqref{eq:decom}.
Comparing the supercharacters for both sides of (\ref{eq:glevel}) and (\ref{eq:glevel2}), one obtain the following combinatorial identities.

\begin{cor}\label{cor:comiden}
The following combinatorial identities hold for $|q|<1$
\begin{align} \label{eq:comb1}
\left.
\frac{
(-y^{-1}zq^{\frac12};q)_\infty
(-yz^{-1}q^{\frac12};q)_\infty
}{
\displaystyle
(-zq^{\frac n2};q)_\infty
\,
(-z^{-1}q^{\frac n2};q)_\infty
}
\right|_{z^0}&=\frac{(q;q)_\infty (-yq^{\frac{n+1}2};q)_\infty
(-y^{-1}q^{\frac{n+1}2};q)_\infty }{(-q;q)_\infty^2 \, (q^n;q)_{\infty} },\\  \label{eq:comb2}
\left.
\frac{
(-y^{-1}zq^{\frac12};q)_\infty
(-yz^{-1}q^{\frac12};q)_\infty
}{
\displaystyle
(-zq^{n};q)_\infty
\,
(-z^{-1}q^{n};q)_\infty
}
\right|_{z^0}&=\frac{(q;q)_\infty (-yq^{n+\frac{1}2};q)_\infty
(-y^{-1}q^{n+\frac{1}2};q)_\infty }{(-q;q)_\infty^2 \, (q^{2n};q)_{\infty} },
\end{align}
where $|_{z^0}$ refers to the $z^0$-coefficient.
\end{cor}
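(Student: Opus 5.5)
We will compute the supercharacter $\operatorname{sch}=\operatorname{str}(q^{L_0}y^{H^2_{(0)}}z^{(H^1+H^3)_{(0)}})$ of both sides of \eqref{eq:glevel} in degree $p=0$ (and likewise of \eqref{eq:glevel2} for \eqref{eq:comb2}, the $\mathfrak{so}_{2n+1}$/$\mathfrak{osp}_{2|2n}$ case), and then equate them. Generic levels are fine here: the graded dimensions of the universal $W$-algebras do not depend on the level, by the PBW theorem of \cite{KW} recalled above.

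\textbf{Step 1: the right-hand side.} For $W^{k_2}(\mathfrak g_2)$ with $\mathfrak g_2=\mathfrak{sl}_{n|1}$, the strong generators are free:
\begin{itemize}
\item the Heisenberg field $J$;
\item the even fields $W_2,\dots,W_n$;
\item the odd fields $G^\pm$, of weight $\frac{n+1}{2}$ and charge $\pm1$.
\end{itemize}
Its supercharacter is therefore
\[
\frac{(-yq^{\frac{n+1}{2}};q)_\infty(-y^{-1}q^{\frac{n+1}{2}};q)_\infty}{(q;q)_\infty\prod_{j=2}^{n}(q^j;q)_\infty}.
\]
Here the odd generators contribute with sign conventions giving the ``$-$'' in $(-y\cdots)$, and $y$ tracks the $H^2$-charge. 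We need to normalize so that the $\mathbb{Z}$-graded supertrace matches; the factors $(-q;q)^2$ presumably come from writing $V_{\mathbb Z}$ and $\mathcal E$ through the $bc$ description with its chosen conformal vector.

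\textbf{Step 2: the left-hand side.} By Proposition \ref{prop:relcoh} and the vanishing of $H^{\mathrm{rel},p}$ for $p\neq 0$ (Lemma \ref{lem:vanish}), the Euler–Poincaré principle lets us compute the supercharacter of $H^{\mathrm{rel},0}$ as the supercharacter of the relative complex $C^{\mathrm{rel}}$. Concretely, we take the supercharacter of $W^{k_1}(\mathfrak g_1,f_{\mathrm{sub}})\otimes V_{\mathbb Z}\otimes\pi^-\otimes\mathcal E$ with the $b_0$- and $H_{(0)}$-constraints imposed. The constraint $\tilde J_{(0)}=0$ becomes the extraction of the $z^0$ coefficient. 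The ghost contribution $(q;q)_\infty^2$, with $b_0$ removed, cancels the oscillators of the two Heisenberg directions $J$ and $\pi^-$.

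What remains from $W^{k_1}(\mathfrak g_1,f_{\mathrm{sub}})$ is the contribution of the fields $W_2,\dots,W_{n-1}$, which is $\prod_{j=2}^{n-1}(q^j;q)_\infty^{-1}$, together with $G^\pm$ of weight $n/2$ and charge $\pm1$ in $z$. Since these are even fields counted with the supertrace twist of $z$, they give $1/[(-zq^{n/2};q)_\infty(-z^{-1}q^{n/2};q)_\infty]$; the sign arises from our shift by the lattice parity. Finally, $V_{\mathbb Z}$, viewed as the $bc$ system $\phi,\psi$ of weights $\frac12$, contributes $(-y^{-1}zq^{1/2};q)_\infty(-yz^{-1}q^{1/2};q)_\infty$, because its charge couples to both $H^2$ and $H^3$ via the decomposition \eqref{eq:decom}.

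\textbf{Step 3: equate the two sides.} The common factor $\prod_{j=2}^{n-1}(q^j;q)_\infty^{-1}$ appears on both sides. Dividing it out leaves $(q^n;q)_\infty$ on the right, and the remaining normalization factors $(q;q)_\infty/(-q;q)_\infty^2$ are moved to the right-hand side. This yields \eqref{eq:comb1}. The same computation with weights $n$ for $G^\pm$ on the left, and $n+\frac12$ and $(q^{2n};q)$ on the right (the even-weight towers cancel except $W_{2n}$), gives \eqref{eq:comb2}.

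\textbf{Main obstacle.} The delicate step is the bookkeeping of the prefactor $(q;q)_\infty/(-q;q)_\infty^2$ and of the signs. This requires fixing precisely which conformal vector is used on $V_{\mathbb Z}$, and how $L_0$ and the $z$-grading on the ghosts and on $\pi^-$ interact with the relative constraint. I would first check the case $n=1$ (or the Kazama–Suzuki case $n=2$) directly via the Jacobi triple product to pin down these normalizations, and then run the general computation.
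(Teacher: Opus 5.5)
\textbf{Gap in Step 3; moreover the stated identities are false.} Your approach is the paper's: Euler--Poincar\'e on the relative complex, then comparison with the free-generator supercharacter of $W^{k_2}(\mathfrak g_2)$. But Step 3 is not a derivation. You declare the prefactor $(q;q)_\infty/(-q;q)_\infty^2$ and the signs in $(-yq^{\frac{n+1}{2}};q)_\infty(-y^{-1}q^{\frac{n+1}{2}};q)_\infty$ so that they match the target, and your own Step 2 contradicts both.

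\emph{The ghost factor.} $\varepsilon_{\mathrm{ghost}}=\ker b_0$ is spanned by monomials in $b_{-m},c_{-m}$ with $m\geq 1$. Its supercharacter is therefore $(q;q)_\infty^2$. This exactly cancels the oscillators of $J$ and $\pi^-$ and leaves no factor of $(-q;q)_\infty$ anywhere.

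\emph{The sign of $y$.} In a supertrace, the odd generators $G^\pm$ of $W^{k_2}(\mathfrak{sl}_{n|1})$ contribute $(yq^{\frac{n+1}{2}};q)_\infty(y^{-1}q^{\frac{n+1}{2}};q)_\infty$. The substitution $z\mapsto -z$ that puts your Step 2 factors into the form of the left side of \eqref{eq:comb1} leaves $y$ untouched.

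Done consistently, the comparison gives
\[
\left.\frac{(-y^{-1}zq^{\frac12};q)_\infty(-yz^{-1}q^{\frac12};q)_\infty}{(-zq^{\frac n2};q)_\infty(-z^{-1}q^{\frac n2};q)_\infty}\right|_{z^0}
=\frac{(yq^{\frac{n+1}{2}};q)_\infty(y^{-1}q^{\frac{n+1}{2}};q)_\infty}{(q;q)_\infty\,(q^{n};q)_\infty}.
\]
For \eqref{eq:comb2} the right side is likewise $(yq^{n+\frac12};q)_\infty(y^{-1}q^{n+\frac12};q)_\infty/\bigl((q;q)_\infty(q^{2n};q)_\infty\bigr)$.

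The displayed identities themselves are false, so no bookkeeping can close Step 3. Take $n\geq 2$ in \eqref{eq:comb1}.
\begin{itemize}
\item The left side is $1+q+O(q^{3/2})$. At order $q$ the only $z^0$ term is $y^{-1}z\cdot yz^{-1}$, coming from the two leading numerator factors.
\item The right side is $\prod_{k=1}^{n-1}(1-q^k)\,(-q;q)_\infty^{-2}+O(q^{3/2})=1-3q+O(q^{3/2})$.
\item For $n=2$ the $q^{3/2}$-coefficients also disagree: $-(y+y^{-1})$ on the left versus $+(y+y^{-1})$ on the right.
\end{itemize}
The same order-$q$ discrepancy occurs in \eqref{eq:comb2} for every $n\geq 1$.

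The paper's proof reaches the stated form through the same two slips you glossed over. It takes $\chi_{\mathrm{ghost}}=(-q;q)_\infty^2$, which is the ordinary character of $\varepsilon_{\mathrm{ghost}}$ rather than the supercharacter that Euler--Poincar\'e requires. It also uses unsigned factors for the odd generators of $W^{k_2}(\mathfrak g_2)$.

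The $n=2$ check you proposed would have exposed this. In that case the $z^0$-coefficient must equal the vacuum supercharacter $1+q-(y+y^{-1})q^{3/2}+3q^2+\cdots$ of the $N=2$ algebra.
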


\begin{proof} We verify \eqref{eq:comb1} by comparing the supercharacters in the case $(\mathfrak{sl}_n,\mathfrak{sl}_{n|1})$. The identity \eqref{eq:comb2} can be established analogously by comparing the supercharacters in the case $(\mathfrak{so}_{2n+1},\mathfrak{osp}_{2|2n})$.
To compute the supercharacter of the left-hand side of~\eqref{eq:glevel}, we use the Euler-Poincar\'e principle. It suffices to compute the supercharacter of the relative subcomplex $\cC^{\mathrm{rel}}$, which is given by the $z^0$-coefficient of the character of
\[
W^{k_1}(\mathfrak {sl}_n,f_{\mathrm{sub}})
\otimes
\mathbb V_{\mathbb Z}
\otimes
\pi^{-}
\otimes
\varepsilon_{\mathrm{ghost}},
\]
where $\varepsilon_{\mathrm{ghost}}$ denotes the vertex subalgebra of $\mathcal E$ annihilated by $b_{0}$.

Using~\eqref{eq:decom}, we obtain
\[
\operatorname{sch}(\cC^{\mathrm{rel}})
=
\left.
\frac{
(-y^{-1}zq^{\frac12};q)_\infty
(-yz^{-1}q^{\frac12};q)_\infty
(-q;q)_\infty^2
}{
\displaystyle
\prod_{i=1}^{n-1}(q^i;q)_\infty
\,
(-zq^{\frac n2};q)_\infty
\,
(-z^{-1}q^{\frac n2};q)_\infty
\,
(q;q)_\infty
}
\right|_{z^0},
\]
 where
\[
\chi_1=
\frac{(-y^{-1}zq^{\frac12};q)_\infty
(-yz^{-1}q^{\frac12};q)_\infty}{
\prod_{i=1}^{n-1}(q^i;q)_\infty
},\quad
\chi_2=
\frac{1}{
(-zq^{\frac n2};q)_\infty
(-z^{-1}q^{\frac n2};q)_\infty
},\quad
\chi_{\mathrm{ghost}}=(-q;q)_\infty^2
\]
are the supercharacters of
$W^{k_1}(\mathfrak{sl}_n,f_{\mathrm{sub}})$,
$V_{\mathbb Z}$, and
$\epsilon_{\mathrm{ghost}}$, respectively, while
$(q;q)_\infty^{-1}$ is the supercharacter of $\pi^-$.
For convenience, set
\[
\chi_3
:=
(-y^{-1}zq^{\frac12};q)_\infty
(-yz^{-1}q^{\frac12};q)_\infty,
\qquad
\bar \chi
:=
\frac{1}{
(q;q)_\infty
\prod_{i=1}^{n-1}(q^i;q)_\infty
},
\]

On the other hand, the same supercharacter coincides with that of
$W^{k_2}(\mathfrak g_2)$, namely,
\[
\operatorname{sch}(W^{k_2}(\mathfrak g_2))
=
\frac{
(-yq^{\frac{n+1}2};q)_\infty
(-y^{-1}q^{\frac{n+1}2};q)_\infty
}{
\prod_{i=1}^{n}(q^i;q)_\infty
}.
\]
Set
\[
\chi_4
:=
(-yq^{\frac{n+1}2};q)_\infty
(-y^{-1}q^{\frac{n+1}2};q)_\infty,
\qquad 
\tilde\chi
:=
\frac{1}{
\prod_{i=1}^{n}(q^i;q)_\infty
}.
\]
Therefore the equality $\operatorname{sch}(\cC^{\mathrm{rel}})=\operatorname{sch}(W^{k_2}(\mathfrak g_2))$ implies that
\[
\bar\chi\,\chi_{\mathrm{ghost}}
\bigl(
\chi_2\chi_3
\bigr)\big|_{z^0}
=
\tilde\chi\,
\chi_4 .
\]
This completes the proof for \eqref{eq:comb1}.
\end{proof}

\subsection{Quasi-classical limit}

Let $V_\epsilon$ be a vertex superalgebra over $\mathbb C[\epsilon]$. We call  $V_{\epsilon}$  a  family of Lie conformal superalgebras if 
\[
[X_\lambda Y]\in \mathbb C[\lambda]\otimes \epsilon \mathbb C[\epsilon] V_{\epsilon}
\]
for all $X, Y$ in $V_{\epsilon}$, and we call  $V_{\epsilon}$ regular if  the multiplication by $\epsilon$ has no kernel.
The classical limit $V^{\mathrm{cl}}$ of $V_{\epsilon}$ defined to be the quotient $V_{\epsilon}/\epsilon V_{\epsilon}$, which is a commutative vertex superalgebra. 
However if $V_{\epsilon}$ does not vanish at $\epsilon=0$, namely
\[
[X_\lambda Y]\in \mathbb C[\lambda]\otimes  \mathbb C[\epsilon] V_{\epsilon}
\]
for all $X, Y$ in $V_{\epsilon}$,
then the classical limit $V^{\mathrm{cl}}$ of $V_{\epsilon}$  is not commutative in general.

Let $V_{\epsilon}$ be a regular family of vertex superalgebras with a grading given by a Heisenberg element $J$. 
Choose a complex number $\sigma$ such that $\sigma^2=\epsilon$ and assume that $V_{\epsilon} $ is strongly generated by fields $\{X^{\alpha}\}_{\alpha\in S}$ with finite index set $S=S^+\cup S^0\cup S^-$, where the decomposition of $S$ is compatible with the Heisenberg degree.
Set $Y^\alpha=\sigma^{-1} X^{\alpha}$ for $\alpha\in S^\pm$ and $Y^\alpha=X^\alpha$ for $\alpha\in S^0$ and denote by $V_{\sigma}$ the vertex superalgebra over $\mathbb C[\sigma]$ generated by $\{Y^\alpha\}_{\alpha\in S}$. We call $V^{\infty}:=V_{\sigma}/\sigma V_{\sigma}$ the large level limit of the regular family.

Let 
\[
X^{\alpha}(z) X^{-\alpha}(w) \sim
 \sum_{n\geq 0}  \frac{q_n(X,\epsilon)}{(z-w)^{n+1}}
\]
where $q_n(X,\epsilon)\in \epsilon \mathbb C[\epsilon] V_{\epsilon}$.

\begin{prop}\label{prop:OPE}
$V^\infty$ is strongly generated by $\{Y^{\alpha}\}_{\alpha\in S}$ with the OPE
\begin{equation}
Y^{\alpha}(z) Y^\beta(w) \sim
 \begin{cases}
 \sum_{n\geq 0}  \frac{p_n(Y,\epsilon)}{(z-w)^{n+1}}  & \text{if } \alpha+\beta =0 \text{ and } \alpha\neq 0 ;\\
 0 & \text{otherwise}.
 \end{cases}
\end{equation}
where $p_n(Y,\epsilon)$ is a specialization of $\frac1\epsilon q_n(X,\epsilon)$ by taking charged fields and $\epsilon$ to zero.
\end{prop}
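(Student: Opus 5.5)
The plan is to compute the $\lambda$-brackets of the rescaled generators $Y^\alpha$ directly in $V_\sigma$ from those of the $X^\alpha$ in $V_\epsilon$, and then reduce modulo $\sigma$. Throughout I will use the family hypothesis $[X_\lambda X']\in\mathbb C[\lambda]\otimes\epsilon\,\mathbb C[\epsilon]V_\epsilon$, written in terms of normally ordered monomials in the $X^\gamma$ and their derivatives. The Heisenberg grading by $J$ sorts the generators: those indexed by $S^{\pm}$ have nonzero charge $\pm$, and those indexed by $S^0$ are neutral. Since the OPE respects the grading, the coefficient $q_n$ of $[X^\alpha{}_\lambda X^\beta]$ is homogeneous of charge $\alpha+\beta$.

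\textbf{Step 1 (strong generation).} Every element of $V_\sigma$ is by definition a $\mathbb C[\sigma]$-combination of normally ordered monomials in the $Y^\alpha$ and their derivatives. Reducing modulo $\sigma V_\sigma$ keeps this spanning property, so $V^\infty$ is strongly generated by the images of the $Y^\alpha$. This step needs nothing more. I will add one remark for later use: rewriting a monomial in the $X$'s in terms of the $Y$'s multiplies it by $\sigma^{c}$, where $c$ is the number of charged factors.

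\textbf{Step 2 (case analysis of $[Y^\alpha{}_\lambda Y^\beta]$).} Each term of $q_n(X,\epsilon)$ is $\epsilon^a$ times a monomial, with $a\ge1$. After rewriting in the $Y$'s, that term carries the factor $\sigma^{2a+c}$, where $c$ is the number of charged factors in the monomial. I split by the charges of $\alpha$ and $\beta$.

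\begin{itemize}
\item If both $\alpha,\beta\in S^0$, the bracket is $\epsilon(\cdots)=\sigma^2(\cdots)$, which vanishes modulo $\sigma$.
\item If exactly one of them is charged, there is a prefactor $\sigma^{-1}$, giving $\sigma^{-1}\sigma^{2a+c}$. This vanishes modulo $\sigma$ because $a\ge1$.
\item If both are charged with $\alpha+\beta\neq0$, there is a prefactor $\sigma^{-2}$, giving $\sigma^{2a+c-2}$. This is a problem only when $a=1$ and $c=0$. But such a term is a monomial in neutral fields times $\epsilon$, so it has charge $0\neq\alpha+\beta$, which is impossible. Hence every term has $2a+c-2\ge1$ and vanishes.
\item If $\alpha+\beta=0$ with $\alpha\ne0$, the same bookkeeping shows that the surviving terms are exactly those with $a=1$ and $c=0$. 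Concretely, $\sigma^{-2}q_n=\epsilon^{-1}q_n$, and setting the charged fields and $\epsilon$ to zero yields $p_n$, expressed in the neutral $Y$'s.
\end{itemize}

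\textbf{Step 3 (regularity and main obstacle).} Regularity of $V_\epsilon$, meaning that multiplication by $\epsilon$ has no kernel, lets me work inside $V_\epsilon[\sigma]$, where division by $\sigma$ is unambiguous. It also makes the reduction mod $\sigma$ well defined, so each of the above statements about "vanishing mod $\sigma$" is legitimate. The main obstacle is Step 2's bookkeeping. It relies on the normally ordered expansion of each $q_n$ in the strong generators being compatible with the charge grading; I will first choose a $J$-homogeneous PBW-type expansion to guarantee this. It also relies on the monomials in the $Y$'s being a basis of $V_\sigma$ modulo $\sigma$, so that terms with positive $\sigma$-power genuinely vanish in $V^\infty$. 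Finally, I will check that no negative powers of $\sigma$ arise from normal-ordering corrections. This follows from the same charge count, because these corrections are themselves brackets, which carry an extra factor of $\epsilon$.
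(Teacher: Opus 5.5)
Your $\sigma$-power count combined with charge conservation is correct and is essentially the argument the paper intends: the paper simply defers to the proof of Proposition 3.2 in \cite{CL1}, adapted so that only the charged generators are rescaled. You do not need a PBW-type expansion or the $Y$-monomials to form a basis modulo $\sigma$. Any term with a positive power of $\sigma$ is $\sigma$ times a normally ordered monomial in the $Y^\alpha$, so it already lies in $\sigma V_\sigma$, and a $J$-homogeneous expansion of $q_n$ is obtained from an arbitrary one by projecting onto the charge-$(\alpha+\beta)$ component.
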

\begin{proof}
The proof is a mimic of Proposition 3.2 in \cite{CL1}.
\end{proof}

\begin{example} \label{ex:affine}
Let $\gg$ be a Lie superalgebra with the normalized invariant supersymmetric bilinear form $\kappa$. For any complex number $l$,  we denote by $V^{l}(\gg)$ the corresponding universal affine vertex algebra at level $l$ with strong generating set $\{J^\xi\}_{\xi\in \gg}$ such that
\[
J^{\xi}(z)\, J^{\eta}(w)
\;\sim\;
\frac{l\,\kappa(\xi,\eta)}{(z-w)^2}
+\frac{J^{[\xi,\eta]}(w)}{z-w},
\qquad
\text{for any } \xi,\eta \in \mathfrak{g}.
\]
Fix a nonzero $k\in \mathbb C$ and let $l=\epsilon^{-1} k$. We recall the regular family $V_{\epsilon}^k(\gg)$ by scaling the strong generators by $\epsilon$ as in \cite{CL1}, and it is the vertex superalgebra strongly generated by $\{J^\xi_\epsilon\}_{\xi \in \gg}$ such that
\[
J^{\xi}_{\epsilon}(z)\, J^{\eta}_{\epsilon}(w)
\;\sim\;
\frac{\epsilon k\,\kappa(\xi,\eta)}{(z-w)^2}
+\frac{\epsilon J^{[\xi,\eta]}_{\epsilon}(w)}{z-w},
\qquad
\text{for any } \xi,\eta \in \mathfrak{g}.
\]
Let $\{X^\alpha\}_{\alpha\in S}$ be a collection of strong generators of $V_{\epsilon}^k(\gg)$ with a finite index set  $S=S^+\cup S^0\cup S^-$, which coincides with the generalized root system of $\gg$. 
Set $Y^\alpha=\sigma^{-1} X^{\alpha}$ for $\alpha\in S^\pm$ and $Y^\alpha=X^\alpha$ for $\alpha\in S^0$ and denote by $V^k_{\sigma}(\gg)$ the vertex superalgebra generated by $\{Y^\alpha\}_{\alpha\in S}$. We obtain  $V^{\infty}(\gg):=V^k_{\sigma}(\gg)/\sigma V^k_{\sigma}(\gg)$ the large level limit of the affine vertex algebra.
\end{example}

Below, we describe explicitly the large level limit of the affine vertex algebra associated with $\mathfrak{sl}_2$.
Let $e,f,h$ be the canonical basis of $\mathfrak {sl}_2$, satisfying that
\begin{equation}\label{eq:sl2}
[e,f]=h, \quad [h,e]=2e,\quad [h,f]=-2f.
\end{equation}
Let $V^k(\mathfrak{sl}_2)$ be the universal affine vertex algebra of $\mathfrak {sl}_2$ at the level $k$. We define
\begin{equation}
e^{(k)}:=\frac{1}{\sqrt k} e, \quad f^{(k)}:=\frac{1}{\sqrt k} f,\quad h^{(k)}:=\frac{1}{k} h.
\end{equation}
By taking the limit $k\to \infty$, we obtain the large limit $\mathfrak {sl}_2^{\infty}$, which is a three-dimensional Lie algebra with a one-dimension center, spanned by $e^{\infty},f^{\infty},h^{\infty}$ and satisfying the nontrivial relation 
\[ [e^{\infty},f^{\infty}]=h^{\infty}. \]
The corresponding affine Lie algebra is defined to be
\[\widehat{\mathfrak{sl}}_2^\infty:=\mathfrak{sl}_2^\infty[t,t^{-1}] \oplus \mathbb C K,
\]
with the nontrivial Lie bracket 
\[
[e^{\infty}\otimes t^m,f^{\infty}\otimes t^n]=h^{\infty}\otimes  t^{m+n}+m\delta_{m+n,0} K.
\]
The large-level limit of $V^k(\mathfrak{sl}_2)$, denoted by $V^{\infty}(\mathfrak{sl}_2)$, is the vacuum representation of the affine Lie algebra $\widehat{\mathfrak{sl}}_2^{\,\infty}$ at level~$1$. Namely,
\[
V^{\infty}(\mathfrak{sl}_2)
\cong U(\widehat{\mathfrak{sl}}_2^{\,\infty})
  \otimes_{U(\mathfrak{sl}_2^{\infty}[t] \oplus \mathbb{C}K)} \mathbb{C},
\]
where $\mathfrak{sl}_2^{\infty}[t]$ acts trivially on $\mathbb{C}$, and $K$ acts as the identity. We note that the vertex algebra $V^{\infty}(\mathfrak {sl}_2)$ contains a big center, and its maximal ideals are not unique. We denote by $V_{\infty}(\mathfrak {sl}_2)$ its  quotient vertex algebra by the augmented ideal generated by the center, which is a free field algebra generated by two elements $e^\infty$ and $f^\infty$ with the nontrivial OPE
\[
e^\infty(z) f^{\infty}(w)\sim \frac{1}{(z-w)}.
\]
In particular, the quotient vertex algebra $V_{\infty}(\mathfrak {sl}_2)$ is simple.

\begin{example}
Let $\mathcal{W}^{k}_{\epsilon}(\mathfrak{g},f)$ be the regular family of $\mathcal{W}$-algebras defined in \cite{CL1}, together with a collection of strong generators
$
\{X^\alpha\}_{\alpha \in S},
$
indexed by a set $S$.
Assume that there exists a Heisenberg element $X^{\alpha_0}$ for some $\alpha_0 \in S$. Moreover, suppose that the index set $S$ admits a decomposition
$
S = S^{+} \sqcup S^{0} \sqcup S^{-},
$
which is compatible with the Heisenberg grading induced by $X^{\alpha_0}$.
Set $Y^\alpha = \sigma^{-1} X^\alpha$ for $\alpha \in S^\pm$, and $Y^\alpha = X^\alpha$ for $\alpha \in S^0$. 
Similarly, we denote by $W_{\sigma}^k(\mathfrak{g},f)$ the vertex superalgebra generated by $\{Y^\alpha\}_{\alpha \in S}$. 
We define 
$
W^{\infty}(\mathfrak{g},f) := W_{\sigma}^k(\mathfrak{g},f) / \sigma W_{\sigma}^k(\mathfrak{g},f)
$
to be the large level limit of the regular family, and denote by $W_{\infty}(\mathfrak{g},f)$ its quotient by the augmented ideal generated by the center. When $f$ is a principal nilpotent element, we simplify the notation $W^k(\mathfrak{g}, f)$ $(\text{resp.}\;  W_k(\mathfrak{g}, f))$ to $W^k(\mathfrak{g})$ $(\text{resp.}\; W_k(\mathfrak{g}))$.

\end{example}

Below, we describe explicitly the large level limit of the affine $W$-algebra associated with $\mathfrak{sl}_{2|1}$.
Let $\mathfrak{ns}^{(2)}$ be the $N=2$ Neveu--Schwarz algebra. 
It is a Lie superalgebra with a basis consisting of even elements $C$, $L_n$, $J_n$ for $n \in \mathbb{Z}$, and odd elements $G_r^\pm$ for $r \in \tfrac{1}{2} + \mathbb{Z}$.
We define 
   \[
   J^{(k)}:=\frac 1k J,\quad L^{(k)}:=\frac 1k L, \quad G^{\pm,(k)}:=\frac{1}{\sqrt k} G^\pm.
   \]
    By taking the limit $k\to \infty$, we obtain the large limit $\mathfrak{ns}^{(2),\infty}$, which is a infinite dimensional Lie superalgebra with even generators $C, J^\infty_n,L^\infty_n$ for $n\in \mathbb Z$ and odd generators $G^{\pm,\infty}_r$ for $r\in \frac12 +\mathbb Z$, with the nontrivial relations
    \begin{equation} \label{eq:Gcomm}
    \{ G^{+,\infty}_r,G^{-,\infty}_s\}= L^\infty_{r+s}+\frac12(r-s) J^\infty_{r+s} -C(r^2-\frac14)\delta_{r+s,0}.
    \end{equation}
    The large level limit  $W^\infty (\mathfrak {sl}_{2|1})$, by definition is  isomorphic to 
    \[
    W^\infty (\mathfrak {sl}_{2|1})\cong U(\mathfrak{ns}^{(2),\infty})\;\big/\;
U(\mathfrak{ns}^{(2),\infty})
\cdot 
\{L_{n-1}^\infty,J_n^\infty,G^{\pm,\infty}_r,C- 1  \mid n\geq 0,\; r\geq  -\tfrac12\}.
\]
It carries a natural vertex (super)algebra structure with strong generators given by
\begin{align*}
Y(L^\infty_{-2}\mathbf{1},z)&=L^\infty(z)=\sum_{n\in \mathbb Z} L_n z^{-n-2},\quad
Y(J^\infty_{-1}\mathbf{1},z)=J^\infty(z)=\sum_{n\in \mathbb Z} J_n z^{-n-1},\\
Y(G^{\pm,\infty}_{-3/2}\mathbf{1},z)&=G^{\pm,\infty}(z)=\sum_{n\in \mathbb Z}G^{\pm,\infty}_{n+1/2} z^{-n-2},
\end{align*}
with the nontrivial OPE 
\begin{equation}\label{eq:GOPE}
G^{+,\infty}(z)G^{-,\infty}(w) \sim 
  \frac{-2}{(z-w)^3}
  + \frac{J^\infty(w)}{(z-w)^2}
  + \frac{L^\infty(w) + \frac12\partial J^\infty(w)}{z-w}.
\end{equation}
The quotient vertex algebra $W_\infty(\mathfrak {sl}_{2|1})$ is simple, as it is a free field algebra generated by two odd elements $G^\pm$ with the nontrivial OPE 
\begin{equation}\label{eq:GsOPE}
G^{+,\infty}(z)G^{-,\infty}(w) \sim 
  \frac{-2}{(z-w)^3}.
\end{equation}

Using the large level limit, we now present, as an example, an orbifold construction of the simple $N=2$ superconformal vertex algebra or the second limit of $W$-superalgebra of $\mathfrak{sl}_{2|1}$ in Remark \ref{rmk:2limits}, which fits into the framework developed in Section~\ref{sec:equiv}.
We recall that a vertex operator algebra is called an $N=2$ superconformal vertex algebra if it contains a Heisenberg field $H(z)$, a  conformal field $L(z)$ together with two odd fields $G^\pm(z)$ with the following nontrivial OPE relations
\begin{equation}\label{eq:N=2}
\begin{aligned}
L(z)L(w) &\sim \frac{c/2}{(z-w)^4} + \frac{2L(w)}{(z-w)^2} + \frac{\partial L(w)}{z-w}, \qquad
L(z)J(w) \sim \frac{J(w)}{(z-w)^2} + \frac{\partial J(w)}{z-w}, \\[4pt]
L(z)G^{\pm}(w) &\sim \frac{\frac32 G^{\pm}(w)}{(z-w)^2} + \frac{\partial G^{\pm}(w)}{z-w}, \qquad
J(z)J(w) \sim \frac{c/3}{(z-w)^2}, \qquad
J(z)G^{\pm}(w) \sim  \frac{\pm G^{\pm}(w)}{z-w}, \\[4pt]
G^{+}(z)G^{-}(w) &\sim \frac{c/3}{(z-w)^3}
+ \frac{J(w)}{(z-w)^2}
+ \frac{L(w)+\frac12\partial J(w)}{z-w},
\end{aligned}
\end{equation}
where $c \in \mathbb C$.
Let $\cA$ be the  simple $N=2$ superconformal vertex algebra of central charge $3$. 
By Example \ref{ex:affine}, the simple quotient $V_{\infty}(\mathfrak{sl}_2)$ of the large level limit $V^{\infty}(\mathfrak{sl}_2)$  is isomorphic to the rank two Heisenberg vertex algebra. 
We then have a natural quotient map
\begin{equation} \label{eq:quotientmap1}
(V^{\infty}(\mathfrak{sl}_2)\otimes  V_{\mathbb{Z}})^{\mathbb{C}^*} 
\longrightarrow 
(V_{\infty}(\mathfrak {sl}_2)\otimes V_{\mathbb{Z}})^{\mathbb{C}^*},
\end{equation}
whose image is isomorphic to $\cA$ by the following theorem.

\begin{theorem}
The $\mathbb{C}^*$-orbifold vertex algebra 
$
\bigl(V_{\infty}(\mathfrak {sl}_2)\otimes V_{\mathbb{Z}}\bigr)^{\mathbb{C}^*}
$
is isomorphic to the simple $N=2$ superconformal vertex algebra $\mathcal A$. 
Moreover, 
$
\bigl(V^{\infty}(\mathfrak {sl}_2)\otimes V_{\mathbb Z} \bigr)^{\mathbb C^*}
$
is a central extension of $\mathcal A$, namely, there is a short exact sequence
\begin{equation}\label{eq:exact}
0 \longrightarrow \langle h^\infty\rangle 
\longrightarrow \bigl(V^{\infty}(\mathfrak{sl}_2)\otimes V_{\mathbb{Z}}\bigr)^{\mathbb{C}^*} 
\xrightarrow{\;\pi\;} \mathcal A \longrightarrow 0,
\end{equation}
where   $\langle h^\infty \rangle$ denotes the ideal of $\bigl(V^{\infty}(\mathfrak{sl}_2)\otimes V_{\mathbb{Z}}\bigr)^{\mathbb{C}^*} $ generated by $h^\infty$.
\end{theorem}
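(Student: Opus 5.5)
We will prove both statements by working with explicit free-field descriptions on each side. By Example \ref{ex:affine}, $V_\infty(\mathfrak{sl}_2)$ is freely generated by $e^\infty,f^\infty$ with the single nontrivial OPE $e^\infty(z)f^\infty(w)\sim (z-w)^{-1}$, i.e. it is a $\beta\gamma$-system. Its $h$-charge grading (charges $\pm 1$, coming from the $\mathbb C^*$-action) makes it $\mathbb Z$-graded. We identify $V_{\mathbb Z}$ with the $bc$-system generated by $\psi=e^{x},\phi=e^{-x}$. The invariants under the diagonal $\mathbb C^*$ then contain the fields
\[
G^+:=\, :e^\infty\phi:,\qquad G^-:=\,:f^\infty\psi:,\qquad J:=\,:e^\infty f^\infty: -\,:\psi\phi:\ \ (\text{suitably signed}),
\]
together with a conformal vector $L=L_{\beta\gamma}+L_{bc}$. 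Here we choose the $\beta\gamma$ conformal weights $(\tfrac12,\tfrac12)$ and the $bc$ weights $(1,0)$ or $(\tfrac12,\tfrac12)$ so that $G^\pm$ have weight $\tfrac32$. We will verify by Wick's theorem that these fields satisfy the $N=2$ relations \eqref{eq:N=2} with $c=3$: the $\beta\gamma$ part contributes $c=-1$ and the $bc$ part with this twisted weight contributes the remaining central charge. The first concrete step is this computation, which produces a vertex algebra homomorphism from the universal $N=2$ algebra of central charge $3$ into $(V_\infty(\mathfrak{sl}_2)\otimes V_{\mathbb Z})^{\mathbb C^*}$.

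Next we show that these fields strongly generate the orbifold. As a module over the Heisenberg algebra $\pi$ of the diagonal charge, $V_{\mathbb Z}=\bigoplus_\lambda \pi_\lambda$. The orbifold is therefore $\bigoplus_{\lambda}V_\infty(\mathfrak{sl}_2)_\lambda\otimes\pi_{-\lambda}$, which matches the vector-space description of $V^2$ in Section \ref{sec:equiv}. Each charge-$\lambda$ sector is generated from $G^\pm$ and $J$ by iterated products, since $e^{\pm x}$-vertex operators are reached by iterated normally ordered products of $\psi$ or $\phi$. We will prove simplicity in one of two ways. The first option is to compare characters: the orbifold character is the $z^0$ coefficient of a Jacobi-triple-product expression, which we compare with the known character of the simple $N=2$ algebra at $c=3$, using an identity of the same type as Corollary \ref{cor:comiden} with $n=2$. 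The second option is more structural: Theorem \ref{thm:equivalence} applied with $V^1=V_\infty(\mathfrak{sl}_2)$ and $\mu=0$ gives an equivalence $\mathcal C_1\simeq\mathcal C_2^0$ that sends ideals to ideals, because the $\mathbb C^*$-graded ideals of $V^1$ correspond to ideals of $V^2$ through $F_0$. Since $V^1$ is simple, $V^2$ is simple, and it is generated by $N=2$ fields at $c=3$, so it is the simple quotient $\mathcal A$. We prefer the second route and will use the first as a check.

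For the central extension \eqref{eq:exact}, apply the exact functor $F_0$ from Corollary \ref{cor:func} to the short exact sequence
\[
0\to\langle h^\infty\rangle\to V^\infty(\mathfrak{sl}_2)\to V_\infty(\mathfrak{sl}_2)\to 0
\]
of $\mathbb Z$-graded modules. Exactness holds because $F_0$ is just tensoring each graded piece with $\pi_{-\lambda}$ and taking a direct sum. The image of the ideal is the ideal generated by $h^\infty\otimes 1$ in the orbifold: since $h^\infty$ is central and of charge $0$, the ideal it generates in $V^\infty(\mathfrak{sl}_2)$ is $\mathbb C^*$-stable, and its charge-$\lambda$ pieces tensored with $\pi_{-\lambda}$ are generated over the orbifold by $h^\infty$. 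This identifies the image with the orbifold ideal $\langle h^\infty\rangle$. The quotient map \eqref{eq:quotientmap1} is then $F_0$ of the projection, with kernel $\langle h^\infty\rangle$ and image $\mathcal A$ by the first part.

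The main obstacle is the Wick-theorem normalization. The $N=2$ OPE $G^+G^-$ must produce exactly $1/(z-w)^3$, with coefficient $c/3=1$, and the correct $J$ and $L$ terms, which requires consistent sign and weight conventions for the $bc$ fields $\psi,\phi$ inside $V_{\mathbb Z}$. We would fix these by matching against \eqref{eq:GsOPE} and rescaling if necessary.
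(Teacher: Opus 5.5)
\textbf{The gap: wrong free-field model.} You take $V_\infty(\mathfrak{sl}_2)$ to be a $\beta\gamma$-system, following the simple-pole OPE printed in Example~\ref{ex:affine}. That OPE is a misprint. From $[e^\infty\otimes t^m,f^\infty\otimes t^n]=h^\infty\otimes t^{m+n}+m\delta_{m+n,0}K$ with $h^\infty\mapsto 0$, one gets $e^\infty(z)f^\infty(w)\sim(z-w)^{-2}$, with $e^\infty,f^\infty$ even of weight $1$. This is the rank-two Heisenberg algebra, as the paper itself says just before the theorem.

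In a $\beta\gamma\otimes bc$ model your first step cannot succeed for any choice of conformal weights. The weights of the $\beta\gamma$ pair sum to $1$, and so do those of $\psi,\phi$, so $\mathrm{wt}(G^+)+\mathrm{wt}(G^-)=2\neq 3$. Moreover, the full contraction in $G^+(z)G^-(w)$ is a product of two simple poles. So there is no $(z-w)^{-3}$ term at all for any rescaling to normalize to $c/3=1$.

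Your $J$ is also not the right current. In the true algebra $:e^\infty f^\infty:$ has weight $2$, and no weight-one field of $V^\infty(\mathfrak{sl}_2)$ measures the $\mathbb C^*$-charge, because $h^\infty$ is central. The fields that work are the paper's:
\[
G^+=:e^\infty c:,\quad G^-=:f^\infty b:,\quad J=h^\infty-:bc:,\quad L=\tfrac12(:\partial b\,c:-:b\,\partial c:)+:e^\infty f^\infty:-:h^\infty:bc::-\tfrac12\partial h^\infty,
\]
with $b,c$ of weight $\frac12$. With these:
\begin{itemize}
\item the double contraction gives exactly $(z-w)^{-3}$, and the single contractions give $J$ and $L+\frac12\partial J$;
\item the charge of $G^\pm$ is carried entirely by the fermions;
\item the central charge is $2+1=3$;
\item the $h^\infty$-terms in $J$ and $L$ absorb the $h^\infty$ in $e^\infty(z)f^\infty(w)$, so the relations already hold in $V^\infty(\mathfrak{sl}_2)\otimes V_{\mathbb Z}$. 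This is what makes $\pi$ in \eqref{eq:exact} a map of $N=2$ algebras.
\end{itemize}

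\textbf{The rest of your plan, once the model is corrected.} Your remaining steps take a genuinely different and more complete route than the paper, which only exhibits these fields and calls the quotient ``clearly simple''. Deducing simplicity of the orbifold from simplicity of $V^1$ via Theorem~\ref{thm:equivalence} is sound: an equivalence preserves subobject lattices, and the subobjects of $V^1$ in $\mathcal C_1$ are its graded ideals. Obtaining \eqref{eq:exact} by applying the exact functor $F_0$ to $0\to\langle h^\infty\rangle\to V^\infty(\mathfrak{sl}_2)\to V_\infty(\mathfrak{sl}_2)\to 0$ is also sound. Your identification of $F_0(\langle h^\infty\rangle)$ with the orbifold ideal generated by $h^\infty$ is correct, because $h^\infty\otimes 1$ acts only on the first factor.

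\textbf{Second gap: generation.} You have not shown that $G^\pm,J,L$ generate the orbifold. Without this, ``simple and containing a quotient of the universal $N=2$ algebra'' does not yield $\mathcal A$. Your stated reason (reaching $e^{\pm x}$ from $\psi,\phi$) only handles the lattice factor, whereas the multiplicity spaces $V^1_\lambda$ are infinite-dimensional. Already in weight $3$ and charge $0$ you need $:\partial e^\infty f^\infty:$ and $:e^\infty\partial f^\infty:$ separately. These come from $\partial L$ together with
\[
:G^+G^-:\;=\;:e^\infty c\,f^\infty b:+:\partial e^\infty f^\infty:+(\text{fermion bilinear}).
\]
A complete argument needs, for example, two inputs:
\begin{itemize}
\item equality of the $z^0$-part of the boson--fermion character with the universal $N=2$ character at $c=3$;
\item injectivity of the universal $N=2$ algebra into the orbifold.
\end{itemize}
Do not use \eqref{eq:comb1} verbatim for the character comparison: at $n=2$ its two sides already differ at order $q$ ($1$ versus $-3$).
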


\begin{proof}
The elements 
\begin{align*}
G^+:= :e^\infty c:, \quad  G^-:= :f^\infty b:,\quad J:=h^\infty-:bc:,\\
 L:= \frac12 (:\partial b c:-:b\partial c: +2:e^\infty f^\infty: -2:h^\infty:bc::-\partial h^\infty )
\end{align*}
satisfy the relations (\ref{eq:N=2}) for the $N=2$ superconformal structure with central charge $3$.
Hence both the sides of (\ref{eq:quotientmap1}) are $N=2$ superconformal vertex algebras.
Moreover the right hand side  is clearly simple.
\end{proof}

\begin{prop}
The short exact sequence \textup{(\ref{eq:exact})} is non-split.
\end{prop}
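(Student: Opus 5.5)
The plan is to argue by contradiction. Suppose there is a vertex algebra homomorphism $s:\mathcal A\to B:=\bigl(V^{\infty}(\mathfrak{sl}_2)\otimes V_{\mathbb Z}\bigr)^{\mathbb C^*}$ with $\pi\circ s=\mathrm{id}_{\mathcal A}$. Since $\mathcal A$ is strongly generated by $G^\pm,J,L$, the map $s$ is determined by the images $s(G^\pm)$. I first want to show that $s(G^\pm)=G^\pm$, the elements $:e^\infty c:$ and $:f^\infty b:$ of $B$.

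For this step I use the gradings. Both $B$ and the ideal $\langle h^\infty\rangle$ are graded by conformal weight (for the conformal vector $L$ written in the proof of the previous theorem) and by $J_{(0)}$-charge. The map $s$ preserves both gradings, because $s(L)$ and $s(J)$ differ from $L$ and $J$ only by elements of the ideal, and $h^\infty$ is central and weight-homogeneous. So $s(G^+)-G^+$ must be an element of $\langle h^\infty\rangle$ of weight $\tfrac32$ and charge $+1$. Since $h^\infty$ is central, the ideal is spanned by $:\!\partial^j h^\infty\, x\!:$ with $x\in B$. An element of charge $+1$ in $B$ requires a factor $e^\infty$ (weight $1$) paired with $c$ or an equivalent vertex operator (weight at least $\tfrac12$). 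Multiplying by $h^\infty$ (weight $1$) then pushes the weight to at least $\tfrac52$. I will check this with the character of $B$, and conclude that $s(G^\pm)=G^\pm$. Consequently
\[
s(J)=G^+_{(1)}G^-=J=h^\infty-:bc:, \qquad s(L+\tfrac12\partial J)=G^+_{(0)}G^-,
\]
where both right-hand sides are computed inside $B$ using \eqref{eq:GOPE}. In other words, $s(\mathcal A)$ would have to be the vertex subalgebra of $B$ generated by $G^\pm$, mapped isomorphically onto $\mathcal A$ by $\pi$.

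The decisive step, and the one I expect to be the main obstacle, is to exhibit a relation of $\mathcal A$ that fails in $B$ by a nonzero multiple of $h^\infty$ (or of an element of $\langle h^\infty\rangle$). My first attempt is the weight identity $L_{(1)}J=J$, which holds in $\mathcal A$. In $B$ I would compute $L_{(1)}J$ with
\[
L=\tfrac12(:\partial b\, c:-:b\partial c:)+:e^\infty f^\infty:-:h^\infty :bc::-\tfrac12\partial h^\infty.
\]
Because $h^\infty$ is central, $L_{(1)}h^\infty$ receives no contribution from $h^\infty$ itself and does not reproduce $h^\infty$. By contrast, the term $-:h^\infty:bc::$ acting on $:bc:$ produces, through the double pole of $:bc:(z):bc:(w)$, a multiple of $h^\infty$. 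So I expect $L_{(1)}J=J+a\,h^\infty$ for an explicit constant $a\neq0$. Applying $s$ to $L_{(1)}J=J$ would then give $a\,h^\infty=0$ in $B$, which is a contradiction.

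If the constants happen to cancel, I would instead test $L_{(2)}L$ and $L_{(0)}L$ (the Virasoro relations), or the third-order pole of $J(z)L(w)$, in the same way. At least one of these must pick up a nonvanishing $h^\infty$ correction, since otherwise the subalgebra generated by $G^\pm$ would avoid the ideal.
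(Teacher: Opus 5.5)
\textbf{The decisive step fails, and no other choice of test relation can repair it.}

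In $B$ the identity $L_{(1)}J=J$ holds exactly. Write $K=:h^\infty :bc::$. Then:
\begin{itemize}
\item the $bc$-part of $L$ sends $J$ to $-:bc:$;
\item the terms $:e^\infty f^\infty:$ and $\partial h^\infty$ act trivially on $J$;
\item $(-K)_{(1)}(-:bc:)=h^\infty_{(-1)}(:bc:)_{(1)}(:bc:)=h^\infty$.
\end{itemize}
So $a=0$. The $h^\infty$ that $L_{(1)}$ fails to produce from $h^\infty$ is supplied exactly by the $K$-term.

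Your fallbacks behave the same way. With $A=:e^\infty f^\infty:$ one finds $[A_\lambda A]=\partial A+2\lambda A-\lambda(:h^\infty h^\infty:+\partial h^\infty)-:\partial h^\infty h^\infty:-\tfrac12\partial^2h^\infty+\tfrac{\lambda^3}{6}$ and $[K_\lambda K]=\lambda :h^\infty h^\infty:+:h^\infty\partial h^\infty:$. Altogether $[L_\lambda L]=\partial L+2\lambda L+\tfrac{\lambda^3}{4}$. Likewise $[L_\lambda J]=\partial J+\lambda J$ and $[L_\lambda G^\pm]=\partial G^\pm+\tfrac32\lambda G^\pm$.

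In fact every $\lambda$-bracket of the $N=2$ Lie conformal superalgebra with $c=3$ holds among $G^\pm,J,L$ in $B$; the paper asserts this in the preceding theorem. Your closing inference ("at least one must pick up an $h^\infty$-correction") is therefore unfounded. That the generating brackets hold says only that the subalgebra $V\subset B$ generated by $G^\pm$ is a quotient of the universal $N=2$ vertex algebra $\mathcal V^3$ at $c=3$. Moreover, $V\cap\langle h^\infty\rangle\neq 0$ holds exactly when some element of the maximal ideal of $\mathcal V^3$ has nonzero image in $B$.

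\textbf{No such element exists, so the proposition is false as stated.} Read as in the paper (non-existence of a vertex algebra section of $\pi$), it cannot be proved. Use the generic-level Kazama--Suzuki decomposition $(V^k(\mathfrak{sl}_2)\otimes V_{\mathbb Z})^{\mathbb C^*}\cong\mathcal V^{3k/(k+2)}\otimes\pi$; its left side has a $k$-independent character. This gives
\[
\Bigl[\frac{(-zy^{-1}q^{\frac12};q)_\infty(-z^{-1}yq^{\frac12};q)_\infty}{(zq;q)_\infty(z^{-1}q;q)_\infty}\Bigr]_{z^0}=\frac{(-yq^{\frac32};q)_\infty(-y^{-1}q^{\frac32};q)_\infty}{(q;q)_\infty(q^2;q)_\infty}.
\]
As a check, both sides give $6$ and $13$ in charge $0$ at weights $3$ and $4$. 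The left side is the character of $(V_\infty(\mathfrak{sl}_2)\otimes V_{\mathbb Z})^{\mathbb C^*}\cong\mathcal A$, and the right side is that of $\mathcal V^3$. The argument then runs:
\begin{enumerate}
\item $\mathcal V^3$ surjects onto $\mathcal A$ with equal characters, so $\mathcal V^3=\mathcal A$ and it is simple.
\item Hence the natural map $\mathcal V^3\to B$ is injective.
\item So $\pi|_V\colon V\to\mathcal A$ is an isomorphism, and $(\pi|_V)^{-1}$ splits the sequence.
\end{enumerate}

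The paper's own argument breaks at the corresponding point. It needs $:h^\infty J:\in V$, but $V_2=\mathrm{span}\{L,\partial J,:JJ:\}$. Comparing the coefficients of $:e^\infty f^\infty:$, $:h^\infty h^\infty:$ and $:(:bc:)(:bc:):$ shows that $:h^\infty J:=:h^\infty h^\infty:-:h^\infty :bc::$ is not in this span. The computation $v_{(1)}J=h^\infty$ only shows that $v\neq 0$ in $\langle h^\infty\rangle$.
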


\begin{proof}
Suppose to the contrary that it is split. Then there exists an injective vertex algebra homomorphism 
\[
\iota: \cA \hookrightarrow (V^\infty(\mathfrak{sl}_2)\otimes V_{\mathbb Z})^{\mathbb C^*}
\]
such that $\pi \circ \iota$ is the identity map on $\cA$.
Since the grading of $G^\pm$ is $3/2$, the only possible lifts of $\pi(G^\pm)$ via $\iota$ are $G^\pm$ themselves. 
Let us write the lift of $\pi(J)$ as $J+\alpha h^\infty$ for some $\alpha \in \mathbb{C}$. 
Then the relation $G^+_{(1)}G^-=2J$ forces $\alpha = 0$. 
Hence we have
\begin{equation} \label{eq:gen}
\iota(\pi(G^\pm)) = G^\pm, \qquad 
\iota(\pi(J)) = J, \qquad 
\iota(\pi(L)) = L.
\end{equation}

Let $V$ denote the vertex subalgebra of $(V^\infty(\mathfrak{sl}_2)\otimes V_{\mathbb Z})^{\mathbb C^*}$ generated by the elements in \eqref{eq:gen}. 
Then by assumption, we have a vertex algebra decomposition
\begin{equation} \label{eq:dec}
(V^\infty(\mathfrak{sl}_2)\otimes V_{\mathbb Z})^{\mathbb C^*} \cong V \oplus \langle h^\infty \rangle.
\end{equation}
However the element $v := :h^\infty J:\in  V\cap \langle h^\infty \rangle$ does not vanish due to Borcherds’ identity
\[
v_{(1)} J 
= \sum_{i \geq 0} h^\infty_{(-1-i)} J_{(1+i)} J 
+ \sum_{i \geq 0} J_{(-i)} h^\infty_{(i)} J
= h^\infty_{(-1)} J_{(1)} J =h^\infty \neq 0,
\]
which contradicts the decomposition \eqref{eq:dec}. 
Therefore, the short exact sequence \textup{(\ref{eq:exact})} is non-split.
\end{proof}

\section{Large and critical level duality}

\subsection{Duality theorem}\label{sec:KSduality}
Let $V$ be a vertex algebra equipped with a Hamiltonian operator $H$ and a Heisenberg vertex subalgebra $\pi$, and let $\mathfrak h$ denote the corresponding Heisenberg Lie algebra. A $V$-module $M$ is said to be graded by Heisenberg weight if $\mathfrak h$ acts semisimply on $M$. Then one has the Heisenberg weight decomposition 
\[
M=\bigoplus_{\lambda\in \mathfrak h^*} M_\lambda.
\]
It is said to be graded by conformal weight if, for each $\lambda\in\mathfrak h^*$, the Heisenberg weight space $M_\lambda$ further decomposes into generalized eigenspaces of the Hamiltonian operator $H$:
\[
M_\lambda=\bigoplus_{\Delta} M_{\lambda,\Delta}.
\]
The corresponding generalized eigenvalues $\Delta$ of $H$ are called the conformal weights of $M$.

\begin{defn}
\label{defcatwtmod}
Let $V$ be a  vertex algebra with a Hamiltonian and a Heisenberg subalgebra $\pi$. 
Then we define
\begin{enumerate}
\item $V$-\textup{wtmod},
 the category of finitely generated $V$-modules with finite-dimensional weight spaces, that is an object $M$ is finitely generated and graded by conformal weight and Heisenberg weight, that is 
\[
M = \bigoplus_{\lambda, \Delta} M_{\lambda, \Delta},
\]
and
$\text{dim}\   M_{\lambda, \Delta} < \infty$ for any $(\lambda, \Delta)$ and for each $\lambda$ there exists $h_\lambda$, such that $M_{\lambda, \Delta} = 0$ for $\text{Re}(\Delta) < \text{Re}(h_\lambda)$.

\item $V$-\textup{wtmod}$_{\geq}$, the full subcategory
of $V$-\textup{wtmod} consisting of objects that are 
finitely generated lower bounded weight modules, that is $M$ in $V$-\textup{wtmod}$_{\geq}$, if it is in $V$-\textup{wtmod} and if Hamiltonian weight is lower bounded. 

\item $V$-\textup{wtmod}$_{\mathcal O}$, the full subcategory
of $V$-\textup{wtmod}$_{\geq}$ consisting of objects such that for every weight $\Delta$ there exists an $N \in \mathbb Z$, such that $M_{\lambda, \Delta} = 0$ for $\text{Re}(\lambda) \geq N$.

\item $V$-\textup{wtmod}$_{KL}$, the full subcategory
of $V$-\textup{wtmod}$_{\mathcal O}$ consisting of objects  with finite-dimensional Hamiltonian weight spaces.

\end{enumerate}
\end{defn}

Let $\mathfrak{g} = \mathfrak{sl}_n$ or $\mathfrak{g} = \mathfrak{so}_{2n+1}$, and let $f_{\mathrm{sub}}$ denote the subregular nilpotent element as before. At the critical level, the Virasoro element is absent; however, the Hamiltonian operator still provides a natural conformal weight grading. 
By~\cite{Ar1}, the center of the subregular $W$-algebra at the critical level coincides with the Feigin-Frenkel center. In particular, it is freely generated by fields of conformal weights
\[
2,3,\dots,n
\]
in the case $\mathfrak{g} = \mathfrak{sl}_n$, and
\[
2,4,\dots,2n
\]
in the case $\mathfrak{g} = \mathfrak{so}_{2n+1}$. Consequently, the fields
\[
W_2, W_3, \dots, W_{n-1}
\]
respectively
\[
W_2, W_4, \dots, W_{2n-2}
\]
belong to the center at the critical level.
In each case, the remaining central generator of maximal conformal weight (namely $n$ for $\mathfrak{sl}_n$ and $2n$ for $\mathfrak{so}_{2n+1}$) is realized by the normally ordered product
\[
:G^+(z)G^-(z):.
\]
More precisely, this field can be written as a linear combination of $:G^+(z)G^-(z):$ and normally ordered monomials in the central fields $J$ and their derivatives, subject to the constraint that the total conformal weight matches the required value. Furthermore, every such monomial contains at least one $J$ or its derivatives, since otherwise it is a normally ordered product of central elements and hence already lie in the center.

We denote by $Z := Z\bigl(W^{-h^\vee}(\gg,f_{\mathrm{sub}})\bigr)$
the center of \(W^{-h^\vee}(\gg,f_{\mathrm{sub}})\), which coincides with the Feigin-Frenkel center \(\mathfrak z(\widehat{\gg})\).
For each  \(\chi\in \text{Op}_{{}^L G}\), let
$
W_{\chi}^{-h^\vee}(\gg,f_{\mathrm{sub}})
$
be the quotient of \(W^{-h^\vee}(\gg,f_{\mathrm{sub}})\) by the ideal generated by
$
z-\chi(z), z\in \mathfrak z(\widehat{\gg}).
$
We will focus on the trivial central character \(\chi_0\), and denote
\[
W_{-h^\vee}(\gg,f_{\mathrm{sub}})
:=
W^{-h^\vee}_{\chi_0}(\gg,f_{\mathrm{sub}}).
\]
According to \cite{Ar1}, the vertex algebra $
W_{-h^\vee}(\gg,f_{\mathrm{sub}})$
is simple.

\begin{remark}\label{rk:red}
When $\mathfrak g=\mathfrak{sl}_n$ or $\mathfrak{so}_{2n+1}$, let $V$ denote the corresponding critical level subregular $W$-algebra, either the universal one or its centerless quotient. Then the vacuum module belongs to $V$-\textup{wtmod}$_{KL}$, and is completely reducible as a module over the corresponding Heisenberg vertex algebra generated by $J$ by \cite[Theorem~1.7.3]{FLM}. 
\end{remark}

The quotient map $W^{-h^\vee}(\gg, f_{\mathrm{sub}}) \to W_{-h^\vee}(\gg, f_{\mathrm{sub}})$ induces a canonical homomorphism 
\begin{equation}\label{eq:quotientmap}
(W^{-h^\vee}(\gg, f_{\mathrm{sub}}) \otimes V_{\mathbb Z})^{\tilde J[t]}
\longrightarrow
(W_{-h^\vee}(\gg, f_{\mathrm{sub}}) \otimes V_{\mathbb Z})^{\tilde J[t]}.
\end{equation}
From now on, we will not distinguish between elements and their images under the projection map \eqref{eq:quotientmap}.
Let $\tilde J:=J-:bc:$.  The following result is an immediate corollary of Proposition \ref{prop:relcoh}.

\begin{cor}
Let $W$ be $W^{-h^\vee}(\gg, f_{\mathrm{sub}}) $ or $W_{-h^\vee}(\gg, f_{\mathrm{sub}}) $.
We have a surjective vertex algebra morphism
\begin{equation}\label{eq:surj}
(W \otimes V_{\mathbb Z})^{\tilde J[t]} \twoheadrightarrow H^{\mathrm{rel},0}(W \otimes V_{\mathbb Z}),
\end{equation}
where the kernel is generated by $\tilde J$.
\end{cor}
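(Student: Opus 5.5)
The plan is to reduce the statement directly to Proposition~\ref{prop:relcoh}, applied to the vertex algebra $V:=W\otimes V_{\mathbb Z}$. Throughout, $V_{\mathbb Z}$ is identified with the $bc$-system generated by $\phi,\psi$, and we take $J^-:=-{:}bc{:}$, where the $b,c$ appearing in $\tilde J=J-{:}bc{:}$ are the lattice fermions. What has to be checked is that the hypotheses of that proposition hold in our setting:
\begin{enumerate}
\item $V$ contains two commuting rank one Heisenberg subalgebras, $\pi^{J}\subset W$ and $\pi^{J^-}\subset V_{\mathbb Z}$;
\item their levels are negatives of each other, so that $\tilde J=J+J^-$ is degenerate;
\item $V$ is semisimple over both of them.
\end{enumerate}

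\textbf{Levels.} At the critical level, Section~\ref{sec:KSgen} shows that the Heisenberg field $J$ of $W^{-h^\vee}(\mathfrak g,f_{\mathrm{sub}})$ has level $-1$, both for $\mathfrak{sl}_n$ and for $\mathfrak{so}_{2n+1}$. The field ${:}bc{:}$ in $V_{\mathbb Z}$ is the lattice Heisenberg field, of level $1$. Hence $\tilde J(z)\tilde J(w)\sim 0$, and $J$ and ${:}bc{:}$ commute, since they live in different tensor factors.

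There is one subtlety in the normalization: Proposition~\ref{prop:relcoh} was phrased with $J\in\pi^{L^+}$ of positive level and $J^-\in\pi^{L^-}$ of negative level. Here the roles are swapped: $J$ has level $-1$, so $\sqrt{-1}J$ is the level-one field. The proof of Proposition~\ref{prop:relcoh} only uses that $\tilde J$ is a degenerate sum of two commuting nondegenerate Heisenberg fields together with Lemma~\ref{lem:vanish}. The latter is symmetric in the two factors and holds for any pair of opposite-level Fock modules, so the swap causes no problem. If needed I would make this explicit by rescaling $J$ by $\sqrt{-1}$.

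\textbf{Semisimplicity.} This is the only nontrivial input, and I expect it to be the main point to justify. For $V_{\mathbb Z}$ it is standard: $V_{\mathbb Z}=\bigoplus_{m}\pi_m$. For $W$, Remark~\ref{rk:red} states that the vacuum module of either the universal or the centerless critical-level subregular $W$-algebra lies in $\mathrm{wtmod}_{KL}$ and is completely reducible over $\pi^J$ by \cite[Theorem~1.7.3]{FLM}. The argument there is that $J_{(0)}$ acts semisimply with integer eigenvalues given by the $G^\pm$-charge, and $\pi^J$ is a nondegenerate Heisenberg algebra. The positive modes of $J$ act locally nilpotently because conformal weights are bounded below, so the FLM result decomposes $W$ as $\bigoplus_\lambda \pi^J_\lambda\otimes\Omega_\lambda$ with vacuum spaces $\Omega_\lambda$. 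For the centerless quotient the same holds, since a quotient of a completely reducible module is completely reducible. Tensoring the two decompositions gives the decomposition of $V$ required in the proof of Proposition~\ref{prop:relcoh}.

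\textbf{Conclusion.} With the hypotheses verified, Proposition~\ref{prop:relcoh} gives that the natural map $(W\otimes V_{\mathbb Z})^{\tilde J[t]}\to H^{\mathrm{rel},0}(W\otimes V_{\mathbb Z})$ is surjective, with kernel the ideal $\langle\tilde J\rangle$. It remains to note that this map is a vertex algebra morphism. The relative subcomplex restricted to $\tilde J[t]$-invariants with trivial ghost part is a vertex subalgebra of the BRST complex consisting of cocycles, and the class map from cocycles to cohomology is multiplicative. This proves \eqref{eq:surj}. For the centerless case one can equally argue that everything is compatible with the projection \eqref{eq:quotientmap}.
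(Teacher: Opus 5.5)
Your proposal is correct and follows the paper's route: the paper obtains this corollary directly from Proposition~\ref{prop:relcoh}. Your checks simply make explicit the hypotheses the paper leaves implicit, namely that $J$ has level $-1$ and ${:}bc{:}$ has level $1$, so $\tilde J$ is degenerate, and that $W\otimes V_{\mathbb Z}$ is semisimple over both Heisenberg subalgebras by Remark~\ref{rk:red}.
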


It is known that $W^{-n}(\mathfrak{sl}_n,f_{\text{sub}})$ is freely generated by the fields $J$, $G^\pm$ and central fields $W_2,\cdots,W_{n-1}$ satisfying the following nontrivial OPEs \cite{GK, LS}
\begin{align}
J(z)J(w)&\sim -(z-w)^{-2},\\ 
G^+(z)G^-(w) &\sim \frac{n! }{(z-w)^{n}} -\frac{n!J(w)}{(z-w)^{n-1}}+\cdots,\\
J(z)G^\pm(w)&\sim \frac{\pm G^\pm}{z-w},
\end{align}
where the remaining terms in second OPE are normally ordered monomials in $J, W_2,\cdots, W_{n-1}$ and their derivatives.
Set in $W^{-n}(\mathfrak{sl}_{n}, f_{\mathrm{sub}}) \otimes V_{\mathbb{Z}}$
\[
X := :G^+ b:,\qquad
Y := :G^- c:.
\]
It is straightforward to verify that both $X$ and $Y$ are annihilated by $\tilde{J}[t]$, and hence belong to the invariant subalgebra
$
\bigl(W^{-n}(\mathfrak{sl}_{n}, f_{\mathrm{sub}}) \otimes V_{\mathbb{Z}}\bigr)^{\tilde{J}[t]}.
$
  The images of the fields $X$ and $Y$ in
$
\bigl(
W_{-n}(\mathfrak {sl}_n,f_{\mathrm{sub}})
\otimes
V_{\mathbb Z}
\bigr)^{\tilde J[t]}
$
and in
$
H^{\mathrm{rel},0}\!\left(
W_{-n}(\mathfrak {sl}_n,f_{\mathrm{sub}})
\otimes
V_{\mathbb Z}
\right)
$
satisfy the OPEs
\[
X(z)Y(w)
\sim
\frac{n!}{(z-w)^{n+1}}
-
\frac{n!\,\tilde J(w)}{(z-w)^n}
+\cdots,
\]
and
\[
X(z)Y(w)
\sim
\frac{n!}{(z-w)^{n+1}},
\]
respectively. The latter OPE shows that the fields $X$ and $Y$ generate an odd free field algebra of orthogonal or symplectic type, which is simple. Consequently,
$
H^{\mathrm{rel},0}\!\left(
W_{-n}(\mathfrak {sl}_n,f_{\mathrm{sub}})
\otimes
V_{\mathbb Z}
\right)
$
contains a free field algebra. The following theorem shows that this inclusion is in fact an isomorphism.

\begin{theorem} \label{thm:KScritial}
The Kazama--Suzuki type isomorphisms hold for the large-critical level duality:
\begin{equation} \label{eq:KS1}
H^{\mathrm{rel},p}\!\left(
  W_{-n}(\mathfrak{sl}_n, f_{\mathrm{sub}})\otimes V_{\mathbb Z}
\right)
\cong  \delta_{p,0} W_{\infty}(\mathfrak{sl}_{n|1}).
\end{equation}
\end{theorem}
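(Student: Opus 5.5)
The plan is to treat vanishing in nonzero degree and the identification in degree zero separately, and then to compare characters.

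\textbf{Vanishing for $p\neq 0$.} Write $W:=W_{-n}(\mathfrak{sl}_n,f_{\mathrm{sub}})$. By Remark~\ref{rk:red}, $W$ is completely reducible over the Heisenberg algebra $\pi^J$, where $J$ has level $-1$. The $J$-charges are integral, since $G^\pm$ carry charge $\pm1$ and the other generators are neutral. Hence
\[
W\cong \bigoplus_{\lambda\in\mathbb Z} C_\lambda\otimes \pi^{J}_{\lambda}.
\]
Identify $V_{\mathbb Z}$ with the $bc$-system, so that $:bc:$ generates a level-one Heisenberg algebra and $V_{\mathbb Z}=\bigoplus_\mu \pi_\mu$. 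Then $W\otimes V_{\mathbb Z}$ is a direct sum of modules $\pi^{J}_\lambda\otimes\pi_\mu$ with multiplicity spaces, and the degenerate element $\tilde J=J-:bc:$ is exactly the one used in Proposition~\ref{prop:relcoh}. That proposition, which rests on Lemma~\ref{lem:vanish} applied summand by summand, immediately gives $H^{\mathrm{rel},p}=0$ for $p\neq0$. It also gives
\[
H^{\mathrm{rel},0}(W\otimes V_{\mathbb Z})\cong \bigoplus_\lambda C_\lambda\otimes\mathbb C\,[\,|\lambda\rangle\otimes|\lambda\rangle\,],
\]
with the sign of $\mu$ fixed by the conventions for $\tilde J$.

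\textbf{Constructing the map in degree zero.} The paragraph preceding the theorem shows that $X=:G^+b:$ and $Y=:G^-c:$ lie in $(W\otimes V_{\mathbb Z})^{\tilde J[t]}$. Their images in $H^{\mathrm{rel},0}$ have OPE $X(z)Y(w)\sim n!\,(z-w)^{-n-1}$, because the remaining OPE terms lie in the ideal generated by $\tilde J$, or involve central fields that vanish in the centerless quotient. After rescaling by a constant, this defines a vertex algebra homomorphism
\[
\iota:W_\infty(\mathfrak{sl}_{n|1})\to H^{\mathrm{rel},0}(W\otimes V_{\mathbb Z}).
\]
Here $W_\infty(\mathfrak{sl}_{n|1})$ is the odd free field algebra on two generators of weight $\frac{n+1}{2}$, as described by Proposition~\ref{prop:OPE}, and it is simple. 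A nonzero map out of a simple algebra is injective, so $\iota$ is injective.

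\textbf{Surjectivity via characters.} It remains to compare supercharacters graded by the Hamiltonian and the $J$-charge. For this I use the centerless quotient of $W^{-n}$, which is free as well: $W^{-n}$ is freely generated and the center $Z$ is a polynomial algebra on $W_2,\dots,W_{n-1}$ and the weight-$n$ field. Since $W=W^{-n}\otimes_Z\mathbb C$, I expect $W$ to have PBW basis in $J,G^\pm$ modulo the single relation killing the weight-$n$ central field. Its character is then
\[
\chi_W=\frac{(1-q^n)\,\cdots}{(q;q)_\infty\,(yq^{n/2};q)_\infty\,(y^{-1}q^{n/2};q)_\infty},
\]
up to signs and conventions. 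I would justify this either through the associated graded algebra together with Arakawa's freeness of $W^{-n}$ over its center \cite{Ar1}, or by a regular-sequence argument.

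By the Euler--Poincar\'e principle, the character of $H^{\mathrm{rel},0}$ is the $z^0$-coefficient computed exactly as in the proof of Corollary~\ref{cor:comiden}, now with $\chi_W$ in place of the generic character. A $q$-series identity of the same shape as \eqref{eq:comb1} should then produce
\[
(yq^{\frac{n+1}2};q)_\infty\,(y^{-1}q^{\frac{n+1}2};q)_\infty,
\]
which is the character of the free field algebra $W_\infty(\mathfrak{sl}_{n|1})$. For the identity itself, I would reduce to \eqref{eq:comb1} by factoring out the contributions of the central fields, which differ between the generic and critical cases only by $q$-Pochhammer factors independent of $z$.

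Equality of characters together with injectivity of $\iota$ gives the isomorphism. I expect the main obstacle to be the exact character of the centerless quotient $W$, namely proving that $W^{-n}$ is free over $Z$ so that no extra cancellations occur.
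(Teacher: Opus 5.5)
Your proposal is correct and follows essentially the same route as the paper. Vanishing in nonzero degree comes from Lemma~\ref{lem:vanish} via the Heisenberg decomposition. The fields $X=:G^+b:$ and $Y=:G^-c:$ generate a simple free field subalgebra of $H^{\mathrm{rel},0}$. An Euler--Poincar\'e supercharacter count then finishes the argument: the Feigin--Frenkel center is divided out, the $z$-independent factors cancel, and \eqref{eq:comb1} supplies the $z^0$-coefficient. The freeness of $W^{-n}(\mathfrak{sl}_n,f_{\mathrm{sub}})$ over its center, which you flag as the main obstacle, is exactly what the paper uses implicitly when it divides $\operatorname{sch}(\cC_{\mathrm{crit}})$ by $\operatorname{sch}(Z)$.
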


\begin{proof}
Similar to the computation in the proof of Corollary \ref{cor:comiden}, one can compute the supercharacters for the critical/large level case.

The conformal vector in the vertex superalgebra $\cC_{\mathrm{crit}}:=W^{-n}(\mathfrak {sl}_n,f_{\mathrm{sub}}) \otimes V_{\mathbb Z}\otimes \mathcal E$ becomes
\[
L=H+L_{V_{\mathbb Z}}+L_{\mathcal E}\in \cC_{\mathrm{crit}},
\]
 where $H$ is the Hamiltonian vector in $W^{-n}(\mathfrak {sl}_n,f_{\mathrm{sub}})$,  $L_{V_{\mathbb Z}}=\frac{1}{2} (\partial \psi \phi-\psi \partial \phi)$,
 $L_{\mathcal E}=-b\partial c $ are the conformal vectors in $V_{\mathbb Z} $,  and $\mathcal E$, respectively.
One then compute supercharacter
\[
\operatorname{sch}
:=
\operatorname{str}
\left(
q^{L_0}
z^{(H^1+H^3)_{(0)}}
\right)
\]
for both sides of~\eqref{eq:glevel} in the case $p=0$, where $H^1$ is Heisenberg elements of $W^{-n}(\mathfrak {sl}_n,f_{\mathrm{sub}})$
and $H^3$ denotes the Heisenberg element of $V_{\mathbb Z}$.

We keep the notations $\chi_i$, $\chi_{\mathrm{ghost}}$, and $\epsilon_{\mathrm{ghost}}$ as in the proof of Corollary \ref{cor:comiden}.
Then the supercharacter for $\cC_{\mathrm{crit}}$ is given by
\[
\operatorname{sch}(\cC_{\mathrm{crit}})
=
\frac{
(-zq^{\frac12};q)_\infty
(-z^{-1}q^{\frac12};q)_\infty
(-q;q)_\infty^2
}{
\displaystyle
\prod_{i=1}^{n-1}(q^i;q)_\infty
\,
(-zq^{\frac n2};q)_\infty
\,
(-z^{-1}q^{\frac n2};q)_\infty
}= \frac{
\chi_{\mathrm{ghost}} \chi_2(\chi_3)\big|_{y=1}
}{
\displaystyle
\prod_{i=1}^{n-1}(q^i;q)_\infty
}.
\]
 Since the center
$
Z=Z\!\left(
W^{-n}(\mathfrak{sl}_{n},f_{\mathrm{sub}})
\right)
$
is the Feigin-Frenkel center~\cite{Ar1}, its supercharacter is
\[
\operatorname{sch}(Z)
=
\prod_{k=2}^{n}
\frac{1}{(q^k;q)_\infty}.
\]
Hence the supercharacter of the complex
$
W_{-n}(\mathfrak{sl}_{n},f_{\mathrm{sub}})
\otimes
V_{\mathbb Z}
\otimes
\varepsilon_{\mathrm{ghost}}
$
is
\[
\frac{
\operatorname{sch}(\cC_{\mathrm{crit}})
}{
\operatorname{sch}(Z)
}
=
\frac{
(q^n;q)_\infty
\chi_{\mathrm{ghost}} \chi_2(\chi_3)\big|_{y=1}
}{
(q;q)_\infty
}.
\]

By Lemma~\ref{lem:vanish}, the relative cohomology of
$
W_{-n}(\mathfrak{sl}_{n},f_{\mathrm{sub}})
\otimes
V_{\mathbb Z}
$
is concentrated in degree zero. Therefore by the Euler-Poincar\'e principle and (\ref{eq:comb1}),  the supercharacter of
$
H^{\mathrm{rel},0}\!\left(
W_{-n}(\mathfrak{sl}_{n},f_{\mathrm{sub}})
\otimes
V_{\mathbb Z}
\right)
$
is given by
\[
\frac{
(q^n;q)_\infty \chi_{\mathrm{ghost}}
\bigl(
\chi_2(\chi_3)\big|_{y=1}
\bigr)\big|_{z^0}
}{
(q;q)_\infty
}
=
\chi_4\big|_{y=1}.
\]
Thus
$
H^{\mathrm{rel},0}\!\left(
W_{-n}(\mathfrak{sl}_{n},f_{\mathrm{sub}})
\otimes
V_{\mathbb Z}
\right)
$
coincides with the free field algebra generated by $X$ and $Y$. The isomorphism~\eqref{eq:KS1} then follows immediately from Proposition~\ref{prop:OPE} and Lemma~\ref{lem:vanish}.
\end{proof}

\noindent Together with Remark \ref{rk:red} and  Theorem \ref{thm:isoalgebra2} with $V=W_{-n}(\mathfrak{sl}_{n}, f_{\mathrm{sub}})$, we obtain the following vertex algebra isomorphism. 

\begin{cor}
We have the vertex algebra isomorphism
\begin{equation}\label{eq:iso2}
(W_{\infty}(\mathfrak{sl}_{n|1})\otimes V_{\sqrt{-1}\mathbb Z} )^{\mathbb C^*}\cong W_{-n}(\mathfrak{sl}_{n},f_{\mathrm{sub}}). 
\end{equation}
\end{cor}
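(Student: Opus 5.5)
The plan is to obtain \eqref{eq:iso2} by applying Theorem~\ref{thm:isoalgebra2} to $V=W_{-n}(\mathfrak{sl}_n,f_{\mathrm{sub}})$ with the rank one lattice $L=\mathbb Z$, and then inserting Theorem~\ref{thm:KScritial}.

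\emph{Step 1: the Heisenberg subalgebra.} Take $\pi=\pi^J$, the subalgebra generated by $J$. At the critical level $J$ has level $-1$, so after the normalisation $J\mapsto \sqrt{-1}J$ it becomes a level one Heisenberg field, which is the one matching the lattice $L=\mathbb Z$. The odd lattice algebra on the cohomology side is then $V_{L^-}=V_{\sqrt{-1}\mathbb Z}$, and the lattice on the other side is $V_L=V_{\mathbb Z}$. I expect some bookkeeping of the factors of $\sqrt{-1}$ here: the role of $V_{\mathbb Z}$ in Theorem~\ref{thm:KScritial} must correspond to $V_L^-$ in Theorem~\ref{thm:isoalgebra2}, and the roles of $L$ and $L^-$ should be interchanged accordingly.

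\emph{Step 2: the hypotheses of Theorem~\ref{thm:isoalgebra2}.} By Remark~\ref{rk:red}, $W_{-n}(\mathfrak{sl}_n,f_{\mathrm{sub}})$ is completely reducible as a $\pi^J$-module. Every generator is $J$-neutral except $G^\pm$, which have charge $\pm1$, so the $J_{(0)}$-eigenvalues are integers. Hence each irreducible summand is a Fock module $\pi_\nu$ with $\nu\in L$, and $V\cong\bigoplus_{\lambda\in\mathbb Z} C_\lambda\otimes\pi_\lambda$.

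\emph{Step 3: apply the theorems.} Theorem~\ref{thm:isoalgebra2} gives
\[
\bigl(H^{\mathrm{rel},0}(V\otimes V_L^-)\otimes V_L\bigr)^T\cong V .
\]
Theorem~\ref{thm:KScritial} identifies $H^{\mathrm{rel},0}$ of $W_{-n}(\mathfrak{sl}_n,f_{\mathrm{sub}})$ tensored with the lattice algebra as $W_\infty(\mathfrak{sl}_{n|1})$, with the isomorphism intertwining the grading by $\tilde J$-charge. Substituting gives $(W_\infty(\mathfrak{sl}_{n|1})\otimes V_{\sqrt{-1}\mathbb Z})^{\mathbb C^*}\cong W_{-n}(\mathfrak{sl}_n,f_{\mathrm{sub}})$, where $T=\mathbb C^*$ acts through the $\mathbb Z$-grading of $W_\infty(\mathfrak{sl}_{n|1})$, in which $X$ has degree $1$ and $Y$ has degree $-1$.

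\emph{Main obstacle.} The hardest step is checking that these gradings agree. The $\mathbb Z$-grading on $H^{\mathrm{rel},0}$ used in Theorem~\ref{thm:isoalgebra2} must coincide, up to sign, with the grading of $W_\infty(\mathfrak{sl}_{n|1})$ in which $X=:G^+b:$ and $Y=:G^-c:$ have degrees $\pm1$. The lattice parity and signature in the two theorems must also be matched correctly, so that the orbifold is taken with $V_{\sqrt{-1}\mathbb Z}$ rather than $V_{\mathbb Z}$. I would verify both points directly, by tracing the explicit map $\Phi$ on the generators $G^\pm\mapsto [X\otimes e^{\mp\sqrt{-1}}]$ and comparing the OPEs.
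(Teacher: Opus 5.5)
Your argument is the paper's: apply Theorem~\ref{thm:isoalgebra2} to $V=W_{-n}(\mathfrak{sl}_n,f_{\mathrm{sub}})$, use Remark~\ref{rk:red} for the Heisenberg hypotheses, and substitute Theorem~\ref{thm:KScritial} for the relative cohomology. The one cleanup is in Step~1: take $L=\sqrt{-1}\mathbb Z$ from the start and leave $J$ unrescaled, as the paper does in its spectral-flow discussion. A level $-1$ Heisenberg field with integer charges is exactly the one attached to $\sqrt{-1}\mathbb Z$, so $V_L^-=V_{\mathbb Z}$ and $V_L=V_{\sqrt{-1}\mathbb Z}$ come out directly; your rescaling $J\mapsto\sqrt{-1}J$ would make the charges imaginary and break the Step~2 hypothesis.
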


\begin{cor}\label{thm:invgen} 
The invariant vertex algebra 
\[
(W_{-n}(\mathfrak {sl}_{n}, f_{\mathrm{sub}}) \otimes V_{\mathbb Z})^{\tilde J[t]}
\]
is strongly generated by $\tilde J$, $X$ and $Y$ with OPE
\[
X(z)Y(w)\sim \dfrac{n!}{(z-w)^{n+1}}.
\]
\end{cor}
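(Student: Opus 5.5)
The plan is to read this off from Proposition~\ref{prop:relcoh} and Theorem~\ref{thm:KScritial}. First, by Remark~\ref{rk:red}, $W_{-n}(\mathfrak{sl}_n,f_{\mathrm{sub}})$ is completely reducible over $\pi^J$, and $V_{\mathbb Z}$ is semisimple over its Heisenberg subalgebra. Hence $W:=W_{-n}(\mathfrak{sl}_n,f_{\mathrm{sub}})\otimes V_{\mathbb Z}$ satisfies the hypotheses of Proposition~\ref{prop:relcoh}, with the degenerate Heisenberg field $\tilde J = J - :bc:$.

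We therefore get a short exact sequence
\[
0\to \langle \tilde J\rangle \to W^{\tilde J[t]} \to H^{\mathrm{rel},0}(W)\to 0 .
\]
By Theorem~\ref{thm:KScritial}, the right-hand term is the free field algebra strongly generated by the images of $X$ and $Y$, with OPE $X(z)Y(w)\sim n!(z-w)^{-n-1}$. So it will suffice to show two things: the ideal $\langle\tilde J\rangle$ inside $W^{\tilde J[t]}$ is spanned by normally ordered monomials in $\tilde J$, $X$, $Y$ and their derivatives containing at least one $\partial^k\tilde J$; and the lifts $X,Y$ have the OPE stated.

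For the OPE, I would take the computed OPE in $(W_{-n}\otimes V_{\mathbb Z})^{\tilde J[t]}$,
\[
X(z)Y(w)\sim \frac{n!}{(z-w)^{n+1}}-\frac{n!\,\tilde J(w)}{(z-w)^n}+\cdots .
\]
In the centerless quotient all central fields $W_i$ vanish, and the higher-order pole terms are built from $J$ alone. I would then show they in fact vanish. The argument is that the $\tilde J$-terms are elements of $\langle\tilde J\rangle$ that commute with $\tilde J[t]$. Since $\tilde J(z)\tilde J(w)\sim 0$ while $\tilde J$ pairs nontrivially with $J$ and with $:bc:$, I would track these coefficients through the structure of $W$ as a module over $\pi^{J}\otimes\pi^{bc}$.

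Concretely, decompose
\[
W=\bigoplus_{\lambda,\mu}\pi^{J}_\lambda\otimes\pi^{bc}_\mu\otimes M_{\lambda,\mu}.
\]
Since $\tilde J[t]$ acts only on the Heisenberg factors, $W^{\tilde J[t]}$ equals $\bigoplus_\lambda (\pi^J_\lambda\otimes\pi^{bc}_{-\lambda})^{\tilde J[t]}\otimes M_{\lambda,-\lambda}$. A Fock-space computation, which is exactly the content of Lemma~\ref{lem:vanish} in cocycle form, shows that $(\pi^J_\lambda\otimes\pi^{bc}_{-\lambda})^{\tilde J[t]}$ is the free $\mathbb C[\tilde J_{-1},\tilde J_{-2},\dots]$-module on the highest weight vector. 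This gives a PBW-type description $W^{\tilde J[t]}\cong \pi^{\tilde J}\otimes H^{\mathrm{rel},0}(W)$ as graded vector spaces.

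Comparing characters then shows that the monomials in $\tilde J, X, Y$ span. This uses that $X,Y$ lift the generators of $H^{\mathrm{rel},0}(W)$, together with the standard filtration argument by $\tilde J$-degree. That establishes strong generation. For the OPE, the $\tilde J$-dependent terms are forced by $\tilde J$-charge and weight to be differential polynomials in $\tilde J$. Applying $\tilde J_{(m)}$ to both sides, which kills every such polynomial since $\tilde J\tilde J\sim0$, gives no constraint, so this point needs more care.

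I expect the main obstacle to be precisely whether the $\tilde J$-correction terms in the $X(z)Y(w)$ OPE vanish. If they do not, the statement must be read with the $\tilde J$-terms present. To settle it, I would first try redefining $X$ by adding $\tilde J$-dependent corrections, or use Borcherds' identity (\ref{2.4}) with $J_{(1)}$ acting on $G^\pm$, to cancel the $\tilde J(w)(z-w)^{-n}$ term.
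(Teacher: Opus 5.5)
Your strong-generation argument is sound and follows the paper's route: pass to $H^{\mathrm{rel},0}$ via Proposition~\ref{prop:relcoh}, identify it with the free field algebra on the classes of $X,Y$ via Theorem~\ref{thm:KScritial}, and lift. Your description of the invariants in each charge sector is also right: they form a free module over $\mathbb C[\tilde J_{(-1)},\tilde J_{(-2)},\dots]$ on the highest weight vector. This is what justifies the paper's unargued step ``it suffices to consider $f$ annihilated by $\tilde J[t]$''. With it, induction on conformal weight gives spanning directly. Write an invariant $w$ as $P(X,Y)+\sum_{k\ge1}\tilde J_{(-k)}u_k$, where the $u_k$ are invariant and of lower weight. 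No character comparison is needed.

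The OPE part, however, cannot be completed, because the stated OPE is false in the invariant algebra. Your planned argument proves nothing: elements of $\langle\tilde J\rangle$ that commute with $\tilde J[t]$ need not vanish, since $\tilde J$ itself has both properties. With $X=G^+\otimes b$ and $Y=G^-\otimes c$ one computes directly
\[
X_{(n-1)}Y=G^+_{(n-1)}G^-\otimes b_{(-1)}c+G^+_{(n-2)}G^-\otimes b_{(0)}c=n!\,{:}bc{:}-n!\,J=-n!\,\tilde J\neq 0 .
\]
The Borcherds check you propose confirms this term rather than cancelling it. Indeed $J_{(1)}X_{(n-1)}Y=(J_{(0)}X)_{(n)}Y=X_{(n)}Y=n!$, while $J_{(1)}\tilde J=-1$.

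No redefinition of $X$ or $Y$ helps. An element with the $J$- and $bc$-charges of $X$ has weight at least $\frac n2+\frac12$, and in that weight it is a multiple of $G^+\otimes b=X$; likewise for $Y$.

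So your fallback reading is the correct one. The displayed OPE holds only modulo $\langle\tilde J\rangle$, i.e.\ in $H^{\mathrm{rel},0}$. In $(W_{-n}(\mathfrak{sl}_n,f_{\mathrm{sub}})\otimes V_{\mathbb Z})^{\tilde J[t]}$ itself one has $X(z)Y(w)\sim n!(z-w)^{-n-1}-n!\,\tilde J(w)(z-w)^{-n}+\cdots$. As you observed, the remaining terms are differential polynomials in the central null field $\tilde J$. This is exactly the OPE the paper itself displays just before Theorem~\ref{thm:KScritial}. The paper's proof of this corollary establishes only strong generation and says nothing about the OPE.
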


\begin{proof}
Since the vertex algebra $W_{-n}(\mathfrak {sl}_{n},f_{\mathrm{sub}})\otimes V_{\mathbb Z}$ is strongly generated by $\tilde J$, $G^\pm$, $b$, and $c$, any element can be written as a linear combination of elements in the form
\[
:\prod_{s=1}^k (\partial^{i_s}\tilde J) \, f(G^+,G^-,b,c):,
\qquad I=(i_1,\dots,i_k),
\]
where
 $f(G^+,G^-,b,c)$ is a normally ordered polynomial in $G^\pm$, $b$, $c$, and their derivatives.
Suppose that
$
:\prod_{s=1}^k (\partial^{i_s}\tilde J)\, f(G^+,G^-,b,c):
$
is annihilated by $\tilde J[t]$. Since $\tilde J$ commutes with itself, it suffices to consider the case that $f(G^+,G^-,b,c)$ is annihilated by $\tilde J[t]$. 
By Theorem~\ref{thm:KScritial},
$
H^{\mathrm{rel},0}\bigl(W_{-n}(\mathfrak {sl}_{n},f_{\mathrm{sub}})\otimes V_{\mathbb Z}\bigr)
$
is a free field algebra generated by $X$ and $Y$. Hence the image of $f(G^+,G^-,b,c)$ in cohomology can be expressed as a normally ordered polynomial in $X$ and $Y$. Consequently, $f(G^+,G^-,b,c)$ itself can be written as a normally ordered polynomial in $\tilde J$, $X$, and $Y$.
\end{proof}

\noindent Let $I$ be the augmentation ideal of $Z$ in $(W^{-n}(\mathfrak {sl}_{n}, f_{\mathrm{sub}}) \otimes V_{\mathbb Z})^{\tilde J[t]}$. We obtain the following short exact sequence
\[
    0 \rightarrow  I \rightarrow (W^{-n}(\mathfrak {sl}_{n},f_{\mathrm{sub}}) \otimes V_{\mathbb Z})^{\tilde J[t]} \rightarrow (W_{-n}(\mathfrak {sl}_{n},f_{\mathrm{sub}}) \otimes V_{\mathbb Z})^{\tilde J[t]} \rightarrow 0
\]
An argument analogous to the proof of Corollary~\ref{thm:invgen} yields the following result.

\begin{cor}
The invariant vertex algebra 
\[
(W^{-n}(\mathfrak {sl}_{n}, f_{\mathrm{sub}}) \otimes V_{\mathbb Z})^{\tilde J[t]}
\]
is strongly generated by
\[
\tilde J,\qquad 
X := :G^+ b:,\qquad  
Y := :G^- c:,\qquad 
W_i,  \quad 2 \leq i \leq n.
\]
\end{cor}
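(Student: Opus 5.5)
We argue as in Corollary~\ref{thm:invgen}, lifting the result from the centerless quotient to the universal algebra along the short exact sequence
\[
0\to I\to (W^{-n}(\mathfrak{sl}_n,f_{\mathrm{sub}})\otimes V_{\mathbb Z})^{\tilde J[t]}\to (W_{-n}(\mathfrak{sl}_n,f_{\mathrm{sub}})\otimes V_{\mathbb Z})^{\tilde J[t]}\to 0 .
\]
Let $\mathcal S$ be the vertex subalgebra generated by $\tilde J$, $X$, $Y$ and $W_2,\dots,W_n$. Here $W_n$ is the top Feigin--Frenkel central generator, given by $:G^+G^-:$ corrected by monomials in $J$.

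\textbf{Membership in the invariants.} Each $W_i$ lies in the center of $W^{-n}$ and has zero $J$-charge. Hence $\tilde J_{(m)}W_i=J_{(m)}W_i=0$ for $m\ge0$, so $W_i$ is $\tilde J[t]$-invariant. The fields $\tilde J$, $X$, $Y$ are invariant by the remark preceding Theorem~\ref{thm:KScritial}. Thus $\mathcal S\subset(W^{-n}\otimes V_{\mathbb Z})^{\tilde J[t]}$.

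\textbf{Spanning the quotient.} By Corollary~\ref{thm:invgen}, the image of $\mathcal S$ in the centerless invariants already contains that of $\tilde J$, $X$, $Y$, and these strongly generate the quotient. So every invariant $v$ can be written as $v=s+i$ with $s$ a normally ordered polynomial in $\tilde J, X, Y$ and $i\in I$.

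\textbf{Handling the ideal $I$.} It remains to show $I\subset\mathcal S$ strongly. Since $W^{-n}$ is freely generated by $J$, $G^\pm$ and $W_2,\dots,W_{n-1}$, and the $W_i$ are central, we have a PBW decomposition $W^{-n}\cong Z'\otimes \langle J,G^\pm\rangle$. Here $Z'=\mathbb C[\partial^kW_i]$ with $i\le n-1$, and $\langle J,G^\pm\rangle$ is the span of normally ordered monomials in $J$ and $G^\pm$.

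Consequently $(W^{-n}\otimes V_{\mathbb Z})^{\tilde J[t]}\cong Z'\otimes(\text{invariants of the }J,G^\pm,b,c\text{ part})$, because $\tilde J[t]$ acts trivially on $Z'$. The second factor is handled by induction on conformal weight, filtering by the number of $G^\pm$ factors. Modulo the ideal generated by $W_n$, it reduces to the centerless computation of Corollary~\ref{thm:invgen}, via the character identity of Theorem~\ref{thm:KScritial}. Any invariant element of $I$ then has the form $\sum :z\, s:$ with $z$ a monomial in the $\partial^kW_i$ of positive degree and $s$ invariant of strictly lower weight. By induction on conformal weight, $s\in\mathcal S$, and hence $I\subset\mathcal S$.

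\textbf{Main obstacle.} The delicate point is $W_n$. It is central in $W^{-n}$ but is not a free generator in the PBW basis; rather, $:G^+G^-:$ is expressed through $W_n$ and $J$-monomials. One must check that invariant monomials involving $:G^+G^-:$ are expressible via $W_n$, $\tilde J$, $X$, $Y$. We will do this by a graded character comparison. Using Theorem~\ref{thm:KScritial}, the invariants of $W^{-n}\otimes V_{\mathbb Z}$ have character equal to $\operatorname{sch}(Z)$ times that of the centerless invariants, namely $\operatorname{sch}(Z)\cdot\chi_4|_{y=1}/(q;q)_\infty$, accounting for the $\tilde J$ modes. We then show that the normally ordered monomials in $\tilde J, X, Y, W_2,\dots,W_n$ span a space with at least this character, by checking their images in the associated graded. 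Equality of characters then forces $\mathcal S$ to exhaust the invariants.
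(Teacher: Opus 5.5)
Your overall plan matches the paper's. Let $K=(W^{-n}(\mathfrak{sl}_n,f_{\mathrm{sub}})\otimes V_{\mathbb Z})^{\tilde J[t]}$ and let $K^{\mathrm{cl}}$ be the centerless invariant algebra. The paper lifts along $0\to I\to K\to K^{\mathrm{cl}}\to 0$, declares the kernel to be the augmentation ideal of $Z$ in $K$ (that is, $Z_+K$), and then says ``analogous argument''.

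\textbf{The step that fails.} Your attempt to justify that kernel description does not work. The factorization $K\cong Z'\otimes(\text{invariants of the }J,G^\pm,b,c\text{ part})$ does not follow from $\tilde J[t]$ acting trivially on $Z'$. The operators $\tilde J_{(m)}$ commute with multiplication by $Z'$, but they do not preserve the PBW complement $\langle J,G^\pm\rangle\otimes V_{\mathbb Z}$. By the Wick formula, $J_{(m)}{:}G^+G^-{:}=G^+_{(m-1)}G^-$ for $m\geq 1$, and these OPE coefficients involve $W_2,\dots,W_{n-1}$. Already for $n=3$, in critical-level Bershadsky--Polyakov, $G^+_{(0)}G^-$ contains the central weight-two generator (the critical limit of $(k+3)L$). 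So the $\tilde J_{(m)}$ are merely $Z'$-linear maps with matrix entries in $Z'$, and their common kernel need not split off $Z'$. Hence your key sentence, that every invariant element of $I$ equals $\sum{:}z\,s{:}$ with $s$ invariant, is unproved. With $M:=W^{-n}\otimes V_{\mathbb Z}$ it is exactly the claim $K\cap Z_+M=Z_+K$. Kernels of $Z$-linear maps on a free $Z$-module need not commute with reduction mod $Z_+$.

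\textbf{The repair.} The character comparison in your last paragraph fixes this and makes the induction unnecessary. Since $\mathcal S\subseteq K$ and weight spaces are finite-dimensional, $\dim\mathcal S_d\geq\dim K_d$ for all $d$ forces $\mathcal S=K$. You need two inputs.
\begin{enumerate}
\item For the upper bound $\mathrm{ch}\,K=\mathrm{ch}\,Z\cdot\mathrm{ch}\,K^{\mathrm{cl}}$, you need $W^{-n}$ to be free over the full center $Z=\mathbb C[\partial^kW_2,\dots,\partial^kW_n]$. Then in $W^{-n}=\bigoplus_\lambda C_\lambda\otimes\pi^J_\lambda$ each $C_\lambda$ is free over $Z$. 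Also, the $\tilde J[t]$-invariants of $\pi^J_\lambda\otimes\pi^{bc}_\lambda$ are $\mathbb C[\tilde J_{(-m)}]\,|\lambda,\lambda\rangle$. The paper uses this same freeness tacitly when dividing by $\mathrm{sch}(Z)$ in Theorem~\ref{thm:KScritial}, so cite or assume it explicitly.
\item For the lower bound you can skip an associated-graded computation. The ordered monomials in $\tilde J,X,Y$ form a basis of $K^{\mathrm{cl}}$: they span by Corollary~\ref{thm:invgen}, and the character count gives independence. So their lifts are independent modulo $Z_+M$, hence $Z$-linearly independent in the free $Z$-module $M$ by graded Nakayama. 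The span of the ${:}z\,s{:}$ inside $\mathcal S$ then already has character $\mathrm{ch}\,Z\cdot\mathrm{ch}\,K^{\mathrm{cl}}$.
\end{enumerate}
If you do use an associated graded, take the conformal-weight filtration, in which the symbol of $W_n$ is $G^+G^-+p(J,W_i)$. Compare genuine dimensions, not specialized supercharacters. As a by-product this argument proves $\ker(K\to K^{\mathrm{cl}})=Z_+K$, which the paper's exact sequence only asserts.
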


For $\mathfrak{g}=\mathfrak{so}_{2n+1}$, the OPEs of the strong generators remain unknown, in particular those involving $G^+$ and $G^-$.

\begin{theorem} \label{thm:OPE}
The subregular $W$-algebra $W^{-2n+1}(\mathfrak{so}_{2n+1}, f_{\mathrm{sub}})$ satisfies the following OPE relations:
\begin{align}\label{eq:JJ}
J(z)J(w) &\sim -(z-w)^{-2},\\ \label{eq:GG}
G^+(z)G^-(w) &\sim \frac{c}{(z-w)^{2n}} - \frac{c \, J(w)}{(z-w)^{2n-1}} + \cdots,\\
J(z)G^\pm(w) &\sim \frac{\pm G^\pm(w)}{z-w}, \label{eq:JG}
\end{align}
where $c$ is a nonzero constant and the omitted terms in the second OPE are normally ordered polynomials in $J$, $W_2,\dots,W_{2n-2}$, and their derivatives.
\end{theorem}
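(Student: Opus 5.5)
We plan to argue from the generic-level structure of $W^k(\mathfrak{so}_{2n+1},f_{\mathrm{sub}})$ and specialize to $k=-2n+1$. For \eqref{eq:JJ}, the level of $J$ was recorded in Section \ref{sec:KSgen} as $k+2n-2$ \cite[Section 3.3]{CL2}; at $k=-2n+1$ this gives $-1$. The relation \eqref{eq:JG} is simply the statement that $G^\pm$ carry $J$-charge $\pm1$, which was the normalization of $J$; since all OPE coefficients are polynomial in $k$, both relations persist at the critical level. It remains to treat \eqref{eq:GG}.

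By weight and charge, $G^+(z)G^-(w)$ has poles of order at most $2n$, and every coefficient is a charge-zero field of weight $2n-j$. Such fields are normally ordered polynomials in $J,W_2,\dots,W_{2n-2}$ and their derivatives, because the only charged generators are $G^\pm$ and $:G^+G^-:$ already has weight $2n$, exceeding every coefficient weight. So the leading coefficient is a constant $c(k)$. The next coefficient has weight one and must be a multiple $a(k)J$. To pin down $a$, we apply $J_{(1)}$ (equivalently use the Jacobi identity/Borcherds identity \eqref{2.4} on the triple $J,G^+,G^-$): the skew-symmetry and the $J$-charges force $a(k)=c(k)/\kappa(k)$ up to sign, where $\kappa=k+2n-2$ is the level of $J$. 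At $\kappa=-1$ this produces exactly the coefficient $-c$ in \eqref{eq:GG}. We expect the sign convention to match the $\mathfrak{sl}_n$ formula recalled from \cite{GK,LS}.

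The main obstacle is showing $c\neq0$ at $k=-2n+1$. We would attack this with the Kazama--Suzuki picture. Suppose $c(-2n+1)=0$. Then in $W_{-2n+1}(\mathfrak{so}_{2n+1},f_{\mathrm{sub}})$ the fields $X=:G^+b:$ and $Y=:G^-c:$ would have a regular OPE modulo $\tilde J$ and central terms. Hence the cohomology $H^{\mathrm{rel},0}$ would be commutative, since it is strongly generated by $X,Y$ by the character argument used in Theorem \ref{thm:KScritial} with identity \eqref{eq:comb2}. That would contradict simplicity of $W_{-2n+1}$ \cite{Ar1} via Theorem \ref{thm:isoalgebra2}: a commutative $H^{\mathrm{rel},0}$ yields an orbifold with a nontrivial ideal, namely the one generated by $X_{(-1)}\mathbf 1$.

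As an independent check, we would compute $c$ via the free field realization (Miura-type embedding into $\pi\otimes\mathcal{E}$ or Wakimoto-type), where the top coefficient is a product of linear factors in $k$. We would then verify that $k=-2n+1$ is not a root. Alternatively, we would use the Kac--Wakimoto formula for the pairing of the lowest component of $\mathfrak g^f$ at weight $n$: it gives $c$ as a nonzero multiple of $\prod_j(k+h^\vee)\cdots$, with factors vanishing only at degenerate levels.
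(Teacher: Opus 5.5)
Your treatment of \eqref{eq:JJ}, \eqref{eq:JG} and of the subleading coefficient agrees with the paper. The identity $J_{(1)}G^+_{(2n-2)}G^-=[J_{(1)},G^+_{(2n-2)}]G^-=G^+_{(2n-1)}G^-$ gives $\kappa a=c$ exactly, with no sign ambiguity, so $a=-c$ at $\kappa=-1$.

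\textbf{The gap is in proving $c\neq 0$.} Your first observation is fine: in $H^{\mathrm{rel},0}$ the OPE of $X$ and $Y$ reduces to a single constant term proportional to $c$. This holds because the $\tilde J[t]$-invariant charge-zero part of $\pi^J\otimes V_{\mathbb Z}$ is $\mathbb C[\partial^k\tilde J]$. The next step fails. You claim that $H^{\mathrm{rel},0}$ is strongly generated by $X,Y$ ``by the character argument'', but this is not available under the hypothesis $c=0$.

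Euler--Poincar\'e together with \eqref{eq:comb2} only shows that $H^{\mathrm{rel},0}$ has the character of the free field algebra on two odd generators of weight $n+\frac12$. In Theorems \ref{thm:KScritial} and \ref{thm:KSneq}, equality with the subalgebra generated by $X,Y$ follows because that subalgebra is the \emph{simple} free field algebra and hence has the full character. Its simplicity is exactly the nondegeneracy $c\neq 0$. Indeed the paper proves Theorem \ref{thm:KSneq} using the very statement you are proving, so invoking it here is circular.

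If $c=0$, then $X$ and $Y$ generate some quotient of the commutative algebra $\bigwedge[\partial^kX,\partial^kY]$, which is far from simple. Nothing prevents that quotient from being proper. So you can conclude neither that $H^{\mathrm{rel},0}$ is commutative nor that the ideal generated by $X$ is proper: another charge $-1$ element could map $X$ to $\mathbf 1$.

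Your ``independent check'' via a free field realization or a Kac--Wakimoto type product formula is not carried out. For $\mathfrak{so}_{2n+1}$ the $G^+G^-$ OPE is not known, which is why an indirect argument is needed.

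\textbf{What is missing is a direct argument in the centerless quotient, with no relative cohomology.} The paper works in $W_{-2n+1}(\mathfrak{so}_{2n+1},f_{\mathrm{sub}})$, where the coefficients $G^+_{(j)}G^-$ for $j\ge 0$ are polynomials in $J$ alone.
\begin{enumerate}
\item Since $J_{(m)}G^\pm=0$ for $m\ge 1$, one has $J_{(m)}G^+_{(j)}G^-=G^+_{(m+j)}G^-$ for $m\geq 1$.
\item If $c=0$, descending induction on $j$ shows that each $G^+_{(j)}G^-$ is a positive-weight vector of the level $-1$ Heisenberg Fock module annihilated by all positive $J$-modes. Such a vector is zero.
\item Hence $G^+(z)G^-(w)$ is regular, and $G^\pm(z)G^\pm(w)$ are regular by charge and weight.
\item The $G^\pm$-modes then commute with each other, and $[J_{(m)},G^\pm_{(l)}]=\pm G^\pm_{(m+l)}$. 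It follows that the ideal generated by $G^\pm$ lies in weights $\geq n$.
\item This is a proper nonzero ideal, contradicting the simplicity of the centerless quotient \cite{Ar1}.
\end{enumerate}
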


\begin{proof}
By choosing a suitable normalization of $\tilde J$, it suffices to show the relation (\ref{eq:GG}). 
According to the conformal weight computation,   the OPE for $G^+(z)$ and $G^-(w)$ has the form 
\[G^+(z)G^-(w) \sim \frac{c}{(z-w)^{2n}} + \frac{c' \, J(w)}{(z-w)^{2n-1}} +\cdots\]
 for certain constants $c,c'\in \mathbb C$. 
One deduces that $c=c'$, thanks to the relation that
\[
-c'=J_{(1)}G^+_{(2n-2)}G^- =[J_{(1)},G^+_{(2n-2)}]G^-=G^+_{2n-1}G^-=c.
\]
Assume that $c=0$.
We consider the corresponding centerless quotient $W_{-2n+1}(\mathfrak{so}_{2n+1}, f_{\mathrm{sub}})$, which is simple by \cite{Ar1}. The OPE for (\ref{eq:JJ}) and (\ref{eq:JG}) remains the same and the OPE for (\ref{eq:GG}) turns into
\[G^+(z)G^-(w) \sim \frac{c}{(z-w)^{2n}} - \frac{c\, J(w)}{(z-w)^{2n-1}} +\sum_{i=1}^{2n-2} \frac{f_i (w)} {(z-w)^{2n-1-i}},\]
where $f_i$ is a polynomial  on the variables $J$ and its derivatives with each monomials of conformal weight  $i+1$. 
Set $f_1(w)=a_1 J^2(w)+a_2 \partial J(w)$. Then by a simple computation
\[
-a_2=J_{(2)} G^+_{(2n-3)}G^-=[J_{(1)},G^+_{(2n-3)}]G^-=G^+_{2n-2}G^-=0,
\]
and 
\[
-2a_1=J_{(1)}^2 G^+_{(2n-3)}G^-=J_{(1)}[J_{(1)},G^+_{(2n-3)}]G^-=J_{(1)}G^+_{2n-2}G^-=0,
\]
which implies $G^+_{(2n-3)}G^-=0$. By applying the the positive modes of $J(z)$ recursively, one can show that all of $f_i=0$ and hence the OPE for $G^+(z)$ and $G^-(w)$ in $W_{-2n+1}(\mathfrak{so}_{2n+1}, f_{\mathrm{sub}})$ is regular. 
Hence $G^+$ and $G^-$ generate a nontrivial proper ideal  in the centerless quotient, which contradicts to the simplicity of $W_{-2n+1}(\mathfrak{so}_{2n+1}, f_{\mathrm{sub}})$. Hence $c\neq 0$.
\end{proof}

Using Theorem \ref{thm:OPE}, one obtains an analogous Kazama-Suzuki type isomorphism in the large and critical level duality setting, by mimicking the proof of Theorem~\ref{thm:KScritial}.
\begin{theorem}  \label{thm:KSneq}
The Kazama-Suzuki type isomorphisms for the pair $(\mathfrak g_1,\mathfrak g_2)=
 (\mathfrak{so}_{2n+1},\mathfrak{osp}_{2|2n})$ hold in the large-critical level duality, namely
\begin{align*}
H^{\mathrm{rel},p}\!\left(
  W_{-2n+1}(\mathfrak{so}_{2n+1}, f_{\mathrm{sub}})\otimes V_{\mathbb Z}
\right)
&\cong  \delta_{p,0} W_{\infty}(\mathfrak{osp}_{2|2n}),\\
(W_{\infty}(\mathfrak{osp}_{2|2n})\otimes V_{\sqrt{-1}\mathbb Z} )^{\mathbb C^*} &\cong W_{-2n+1}(\mathfrak{so}_{2n+1},f_{\mathrm{sub}}). 
\end{align*}
\end{theorem}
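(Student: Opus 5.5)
The plan is to copy the proof of Theorem~\ref{thm:KScritial}, replacing the type~$A$ input with Theorem~\ref{thm:OPE} and with identity \eqref{eq:comb2}.

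First, work inside $W_{-2n+1}(\mathfrak{so}_{2n+1},f_{\mathrm{sub}})\otimes V_{\mathbb Z}$ with $\tilde J:=J-:bc:$ and put
\[
X:=:G^+b:,\qquad Y:=:G^-c:.
\]
By \eqref{eq:JG}, $X$ and $Y$ are annihilated by $\tilde J[t]$. Using \eqref{eq:GG} and the free fermion OPE $b(z)c(w)\sim (z-w)^{-1}$, compute the $X(z)Y(w)$ OPE in the invariant subalgebra. It has the shape
\[
X(z)Y(w)\sim \frac{c}{(z-w)^{2n+1}}-\frac{c\,\tilde J(w)}{(z-w)^{2n}}+\cdots,
\]
where the remaining terms lie in the ideal generated by $\tilde J$; the central $W_{2i}$ terms vanish in the centerless quotient. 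By Proposition~\ref{prop:relcoh}, in $H^{\mathrm{rel},0}$ this becomes $X(z)Y(w)\sim c(z-w)^{-2n-1}$ with $c\neq 0$. So $X$ and $Y$ generate a simple odd free field algebra with generators of weight $n+\tfrac12$. By Proposition~\ref{prop:OPE} applied to the regular family of $W^k(\mathfrak{osp}_{2|2n})$, this algebra is $W_\infty(\mathfrak{osp}_{2|2n})$, since $G^\pm$ have weight $n+\tfrac12$ and the $J,W_{2i}$ become central and are killed. Because it is simple, it embeds into $H^{\mathrm{rel},0}$.

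Second, show this embedding is onto by comparing characters. The vacuum module is completely reducible over $\pi^J$ (Remark~\ref{rk:red}), so Lemma~\ref{lem:vanish} concentrates the cohomology in degree $0$ and the Euler--Poincar\'e principle applies. The supercharacter of $W^{-2n+1}(\mathfrak{so}_{2n+1},f_{\mathrm{sub}})$ is read off from its PBW basis: the generators are $J$, $G^\pm$ of weight $n$, and $W_2,\dots,W_{2n-2}$. By \cite{Ar1}, the center is freely generated in weights $2,4,\dots,2n$, so dividing by its character gives the character of the centerless quotient.

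Next, multiply by $\chi_2$, now with $q^{n}$, and by $\chi_{\mathrm{ghost}}$, then take the $z^0$ coefficient. The factor $\prod_{i}(q^{2i};q)_\infty^{-1}$ from $W_2,\dots,W_{2n-2}$ cancels against the center character, leaving $(q^{2n};q)_\infty/(q;q)_\infty$ times the bracket. Identity \eqref{eq:comb2} at $y=1$ then yields
\[
(-q^{n+\frac12};q)_\infty^2,
\]
which is exactly the character of the free field algebra on $X,Y$. This gives the first isomorphism.

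Third, obtain the second isomorphism from Theorem~\ref{thm:isoalgebra2} with $V=W_{-2n+1}(\mathfrak{so}_{2n+1},f_{\mathrm{sub}})$ and $L=\mathbb Z$. Its hypothesis is that $V$ is semisimple over $\pi^J$ with integral charges, which holds by Remark~\ref{rk:red} together with the fact that the $J$-charges of the generators are $0$ and $\pm1$.

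The main obstacle is the first step. One must check that the lower-order terms of the $G^+G^-$ OPE, which are only known qualitatively, really become zero modulo $\langle\tilde J\rangle$ and the center. Since they are polynomials in $J$ and the central fields, I would argue as in the proof of Theorem~\ref{thm:OPE}: write $J=\tilde J+{:}bc{:}$ and use that only $\tilde J$-invariants survive. I would also confirm the character bookkeeping, in particular the matching between the $q$-power conventions for the Hamiltonian and the $bc$ conformal vector.
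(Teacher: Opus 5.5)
Your proposal follows the paper's proof. Theorem~\ref{thm:OPE} (with $c\neq 0$) gives the odd free field algebra generated by $X={:}G^+b{:}$ and $Y={:}G^-c{:}$ inside $H^{\mathrm{rel},0}$. Comparing supercharacters via \eqref{eq:comb2} at $y=1$, after dividing out the Feigin--Frenkel center, shows this inclusion is an equality, and the orbifold statement then follows from Theorem~\ref{thm:isoalgebra2} exactly as in the type~$A$ corollary.

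There are two small points. First, in that last step the lattice should be $L=\sqrt{-1}\mathbb Z$, so that $V_L^-=V_{\mathbb Z}$ pairs with $J$ of level $-1$, rather than $L=\mathbb Z$. Second, the lower-order terms you worry about are already disposed of by the character: $(-q^{n+\frac12};q)_\infty^2$ has no states of integer weight in $[1,2n]$, which forces $X_{(m)}Y=0$ in $H^{\mathrm{rel},0}$ for $0\le m<2n$.
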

\begin{proof}
By Theorem \ref{thm:OPE}, the vertex algebra $H^{\mathrm{rel},0}(W_{-2n+1}(\mathfrak{so}_{2n+1},f_{\mathrm{sub}}) \otimes V_{\mathbb Z})$ contains the free field algebra generated by the odd elements
\[
X:=:G^+ b:,\qquad Y:=:G^-c:
\]
of conformal weight $n+\frac{1}{2}$.
By comparing their characters, we conclude that the two vertex algebras are isomorphic. Moreover, they are both isomorphic to $W_{\infty}(\mathfrak{osp}_{2|2n}).$
\end{proof}

By the proofs of Theorem \ref{thm:KScritial} and \ref{thm:KSneq}, both the $W_{\infty}(\mathfrak{sl}_{2n|1})$ and $W_{\infty}(\mathfrak{osp}_{2|2n})$ are isomorphic to the free field algebra generated by two odd fields of conformal weight $n+\frac{1}{2}$. Hence the corresponding centerless critical level subregulra $W$-algebras are also isomorphic.

\begin{cor}
There is an isomorphism of vertex algebras
\[
W_{-2n}(\mathfrak{sl}_{2n},f_{\mathrm{sub}})
\cong
W_{-2n+1}(\mathfrak{so}_{2n+1},f_{\mathrm{sub}}).
\]
\end{cor}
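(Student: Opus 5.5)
The plan is to pass through the common large-level side. Theorem \ref{thm:KScritial}, applied with $n$ replaced by $2n$, gives
\[
H^{\mathrm{rel},0}\bigl(W_{-2n}(\mathfrak{sl}_{2n},f_{\mathrm{sub}})\otimes V_{\mathbb Z}\bigr)\cong W_\infty(\mathfrak{sl}_{2n|1}),
\]
and its proof shows that this algebra is the free field algebra generated by the odd fields $X=:G^+b:$ and $Y=:G^-c:$. These fields have conformal weight $\frac{2n}{2}+\frac12=n+\frac12$, and their OPE is $X(z)Y(w)\sim (2n)!\,(z-w)^{-2n-1}$. Theorem \ref{thm:KSneq} gives the analogous statement for $\mathfrak{so}_{2n+1}$: $W_\infty(\mathfrak{osp}_{2|2n})$ is the free field algebra generated by two odd fields of weight $n+\frac12$, with OPE $X(z)Y(w)\sim c\,(z-w)^{-2n-1}$ for the nonzero constant $c$ of Theorem \ref{thm:OPE}.

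The first step is to rescale $Y$ by $(2n)!/c$. This identifies the two free field algebras as odd free field algebras of symplectic type and rank one with the same weight. Such an algebra is determined up to isomorphism by the weight and a nonzero pairing, since Definition \ref{defn:freefield} fixes all OPEs among the strong generators. Both algebras are simple, so the universal algebra on these OPEs surjects onto each, and both have the same free character. So I get an isomorphism $\phi\colon W_\infty(\mathfrak{sl}_{2n|1})\cong W_\infty(\mathfrak{osp}_{2|2n})$.

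The second step is to arrange that $\phi$ respects the Heisenberg/$\mathbb{C}^*$ grading, with $X$ of charge $+1$ and $Y$ of charge $-1$ on both sides. This holds because the rescaling preserves charge, so $\phi$ commutes with the torus action.

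The third step applies the orbifold identities, namely Corollary (eq:iso2) for type $A$ and the second isomorphism of Theorem \ref{thm:KSneq} for type $B$:
\[
W_{-2n}(\mathfrak{sl}_{2n},f_{\mathrm{sub}})\cong (W_\infty(\mathfrak{sl}_{2n|1})\otimes V_{\sqrt{-1}\mathbb Z})^{\mathbb C^*}\xrightarrow{\ \phi\otimes \mathrm{id}\ }(W_\infty(\mathfrak{osp}_{2|2n})\otimes V_{\sqrt{-1}\mathbb Z})^{\mathbb C^*}\cong W_{-2n+1}(\mathfrak{so}_{2n+1},f_{\mathrm{sub}}).
\]
Because $\phi$ is $\mathbb C^*$-equivariant, $\phi\otimes\mathrm{id}$ maps invariants isomorphically onto invariants, which gives the claim. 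Equivalently, this is the functor $F_0$ of Corollary \ref{cor:func} applied to an isomorphism of $L$-graded algebras with $L=\mathbb Z$.

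The main obstacle is the second step. I have to check that the $\mathbb Z$-grading used in the orbifold construction on each side is exactly the charge grading with $X,Y$ in degrees $\pm1$. This follows because $\tilde J$-invariance forces the $V_{\mathbb Z}$ factor to carry charge opposite to the $J$-charge, and $G^\pm$ have $J$-charge $\pm1$ in both types. I also have to make sure the identification in Theorem \ref{thm:KSneq} truly holds as graded algebras; for this I would reread its proof, where the isomorphism is induced from $G^\pm\mapsto X,Y$.
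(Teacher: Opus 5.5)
Your proposal is correct and follows the paper's argument. The paper likewise notes that, by the proofs of Theorems \ref{thm:KScritial} and \ref{thm:KSneq}, both $W_\infty(\mathfrak{sl}_{2n|1})$ and $W_\infty(\mathfrak{osp}_{2|2n})$ are the free field algebra on two odd fields of weight $n+\frac12$, and it transports this identification through the orbifold isomorphisms (\ref{eq:iso2}) and Theorem \ref{thm:KSneq}. Your explicit check that the identification is $\mathbb C^*$-equivariant, with charges $\pm1$ on $X,Y$ on both sides, is a step the paper leaves implicit.
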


In the large-critical level setting, the Kazama-Suzuki type isomorphism for universal affine $W$-algebras does not hold in general; nevertheless, one still obtains a canonical embedding map between the corresponding vertex algebras.

\begin{theorem} \label{thm:emb}
There exists an embedding of the universal affine W-algebra associated to $\mathfrak{sl}_n$ at the critical level into the orbifold vertex algebra of $W^{\infty}(\mathfrak{sl}_{n|1})$, namely
\begin{equation} \label{eq:emb}
W^{-n}(\mathfrak{sl}_n,f_{\mathrm{sub}}) \hookrightarrow  (W^{\infty}(\mathfrak{sl}_{n|1})\otimes V_{\sqrt{-1}\mathbb Z} )^{\mathbb C^*} 
\end{equation}
\end{theorem}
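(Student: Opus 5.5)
We will construct the map on strong generators and then prove injectivity by a graded-dimension (or PBW) argument. Write the lattice algebra $V_{\sqrt{-1}\mathbb Z}$ via its Heisenberg field $h$ of level $-1$ and the vertex operators $e^{\pm\sqrt{-1}\varphi}$. On the target side, $W^{\infty}(\mathfrak{sl}_{n|1})$ is strongly generated by $J^\infty$, the odd fields $G^{\pm,\infty}$, and the (now central) fields $W^\infty_2,\dots,W^\infty_n$. Its $G^{+,\infty}G^{-,\infty}$ OPE is obtained from Proposition~\ref{prop:OPE}, with leading term proportional to $(z-w)^{-n-1}$ and subleading term involving $J^\infty$ and the central fields; compare \eqref{eq:GOPE} for $n=2$.

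Under the $\mathbb C^*$-orbifold we propose the images
\begin{align*}
J &\mapsto J^\infty + h,\\
G^+ &\mapsto :G^{+,\infty} e^{\sqrt{-1}\varphi}:,\\
G^- &\mapsto :G^{-,\infty} e^{-\sqrt{-1}\varphi}:,\\
W_i &\mapsto \text{suitable central elements of } W^\infty(\mathfrak{sl}_{n|1}),
\end{align*}
with signs and normalizations fixed so that the $J$-charges are $\pm1$. This is the inverse of the construction $X=:G^+b:$, $Y=:G^-c:$ used in Theorem~\ref{thm:KScritial}. The level of $J^\infty$ in the limit is $0$ (it is central), so the image of $J$ has level $-1$, matching \eqref{eq:JJ}'s analogue for $\mathfrak{sl}_n$. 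Since $e^{\pm\sqrt{-1}\varphi}$ has weight $-\tfrac12$ and OPE $(z-w)^{+1}$, the OPE of the images of $G^+$ and $G^-$ starts at order $(z-w)^{-n}$, which matches the known OPE of $W^{-n}(\mathfrak{sl}_n,f_{\mathrm{sub}})$. To obtain a vertex algebra homomorphism, we will use that $W^{-n}(\mathfrak{sl}_n,f_{\mathrm{sub}})$ is the universal object in the sense of \cite{GK,LS}: the OPE among the generators is determined by $n$ once the $J$-level and the $GG$ leading coefficient are fixed. We therefore only need to verify that the image fields close with OPEs of the correct shape, with the central generators $W_i$ mapped to appropriate combinations of $W^\infty_i$.

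\textbf{Main obstacle.} The hardest step is justifying this universality, namely that the defining OPE relations of the critical-level universal algebra are satisfied in the target, including the full $G^+G^-$ OPE and not just its leading terms. Our first approach is a limit argument. At generic level, Theorem~\ref{thm:genericlevel} gives an embedding of $W^{k_1}(\mathfrak{sl}_n,f_{\mathrm{sub}})$ into $W^{k_2}(\mathfrak{sl}_{n|1})\otimes V_{\sqrt{-1}\mathbb Z}$. After rescaling $G^\pm$ by $\sigma$ as in the definition of the large level limit, this embedding is defined over $\mathbb C[\sigma]$. Specializing at $\sigma=0$, i.e.\ $k_1\to -n$, then yields a homomorphism into the orbifold of $W^\infty(\mathfrak{sl}_{n|1})$, and the relations hold automatically by continuity.

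For injectivity, both sides have PBW bases. We will filter by $J$-charge and conformal weight and check that ordered monomials in the images of $J,G^\pm,W_i$ remain linearly independent in $\mathrm{gr}$ of the target. There $G^{\pm,\infty}$ are free odd generators, the central fields $W^\infty_i$ are free, and the charged lattice factors separate the charges, so independence follows as in the character comparison of Corollary~\ref{cor:comiden}. Concretely, we expect the character of the image to equal that of $W^{-n}(\mathfrak{sl}_n,f_{\mathrm{sub}})$, computed as in the proof of Theorem~\ref{thm:KScritial} but without quotienting by $Z$.
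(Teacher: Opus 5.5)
You build the same fields as the paper: $X=:G^{+,\infty}e^{\lambda}:$, $Y=\epsilon(\lambda,\lambda):G^{-,\infty}e^{-\lambda}:$, and $\tilde J^\infty=J^\infty-\lambda$ with $(\lambda,\lambda)=-1$. You also end, as the paper does, with the universality statement of \cite{LS}. There is a gap, however, in how you propose to verify the relations.

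The paper does not go through a limit from generic level. It computes the OPE directly in the target. Up to a cocycle sign, $Y(e^{\lambda},z)e^{-\lambda}=z\,E^-(\lambda,z)\mathbf 1$, and combining this with the $G^{+,\infty}G^{-,\infty}$ OPE gives $X_{(m)}Y=0$ for $m\ge n$, $X_{(n-1)}Y=-1$ and $X_{(n-2)}Y=J^\infty-\lambda=\tilde J^\infty$.

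That subleading coefficient is the step missing from your plan. Since $J^\infty$ is central, the fields $-\lambda$, $J^\infty-\lambda$ and $-J^\infty-\lambda$ all have level $-1$ and charges $\pm1$ on $X,Y$. So matching the level, the charges and the leading pole does not tell you where $J$ goes. The subleading term does: if $G^\pm\mapsto\alpha X,\beta Y$, then $G^+_{(n-1)}G^-=n!$ forces $\alpha\beta=-n!$. Then $G^+_{(n-2)}G^-=-n!J$ forces $J\mapsto\tilde J^\infty$ exactly.

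With the top two coefficients matched, the identities $\tilde J^\infty_{(p)}(X_{(j)}Y)=X_{(j+p)}Y$ for $p\ge1$ fix the $\tilde J^\infty$-dependence of all lower coefficients. What remains is central, built from $J^\infty$ and the $w_i^\infty$. This is the form of OPE the paper feeds into \cite{LS}.

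The limit argument you offer in place of this check does not close the gap as stated. Theorem~\ref{thm:genericlevel} holds at generic levels. The claim that, after rescaling, it becomes a morphism of families over $\mathbb C[\sigma]$ is exactly the nontrivial content.

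To establish it you would have to show the following. The images of all strong generators of $W^{k_1}(\mathfrak{sl}_n,f_{\mathrm{sub}})$, normalized so that the source family still specializes to $W^{-n}(\mathfrak{sl}_n,f_{\mathrm{sub}})$, must have no poles at $\sigma=0$ when rewritten in the rescaled generators of $W^{k_2}(\mathfrak{sl}_{n|1})$. That rewriting introduces negative powers of $\sigma$. For the $W_i$ the coset images are not explicit, so you have no control over this.

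Even granting regularity, a specialization of embeddings need not be injective. Your PBW/character check needs to know the $\sigma=0$ images of the $W_i$. It also needs their symbols to be algebraically independent of those of $\tilde J^\infty$, $X$, $Y$. The limit gives you no information about either.

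The direct computation avoids both problems. The central fields appearing in the lower coefficients of $X(z)Y(w)$ are explicit polynomials in $J^\infty$ and the $w_i^\infty$. Injectivity then reduces to free generation of the subalgebra generated by $\tilde J^\infty$, $X$, $Y$ and these central fields.
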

\begin{proof}
The large level $W$-algebra $W^{\infty}(\mathfrak {sl}_{n|1})$ is freely and strongly generated by a degenerate Heisenberg element $J^\infty$, two odd elements $G^{\pm,\infty}$ of weights $\frac{n+1}2$, and other central elements $w_2^\infty,\cdots, w_n^\infty$ of weight $2,\cdots, n$ with the OPEs
\begin{equation} \label{eq:GLOPE}
G^{+,\infty}(z)G^{-,\infty}(w)\sim -(z-w)^{-n-1}+J^\infty(w) (z-w)^{-n}+\cdots,
\end{equation}
where the remaining terms depend only on $J^\infty,w_2^\infty,\cdots, w_n^\infty$.

For convenience, denote the lattice $\sqrt{-1}\,\mathbb{Z}$ by $L^-:=\mathbb{Z}\lambda$,
where $(\lambda,\lambda)=-1$.  
The lattice vertex algebra $V^-_L=V_{\sqrt{-1}\mathbb{Z}}$ decomposes as 
\[
V^-_L \cong \bigoplus_{n\in\mathbb{Z}} \pi^-_{n\lambda},
\]
and we assign the $\mathbb{C}^*$-degree of $\pi^-_{n\lambda}$ to be $(n\lambda,\lambda)=-n$.
Moreover, the $\mathbb{C}^*$-degree of $G^{\pm,\infty}$ is $\pm 1$, while that of $J^\infty$ and $L^\infty$
is $0$.

Define
\[
X := :G^{+,\infty}\otimes e^{\lambda}:,\qquad
Y := \epsilon(\lambda,\lambda)\, :G^{-,\infty}\otimes e^{-\lambda}:, \qquad
\tilde J^\infty :=J^\infty\otimes 1 -\lambda,
\]
where $\epsilon(\cdot,\cdot)$ denotes the $2$-cocycle of the lattice vertex algebra $V_{L^-}$.
All fields $X,Y,\tilde J^\infty$ have $\mathbb{C}^*$-degree $0$, and hence lie in
\[
\left(W^{\infty}(\mathfrak{sl}_{n|1})\otimes V_{\sqrt{-1}\mathbb{Z}}\right)^{\mathbb{C}^*}.
\]
By \eqref{eq:GLOPE}, for $n\geq 0$ we have
\begin{align}
X_{(m)} Y 
&= \epsilon(\lambda,\lambda)\, \mathrm{Res}_z\, z^{m}\,
   G^{+,\infty}(z)\, G^{-,\infty}
   \otimes Y_{V_{L^-}}(e^{\lambda},z)\,  e^{-\lambda}
   \nonumber\\
&= \mathrm{Res}_z\, z^{m+1}\, 
   G^{+,\infty}(z)\, G^{-,\infty}
   \otimes E^-(\lambda,z)\, 1
   \nonumber\\
&= \mathrm{Res}_z\, z^{m+1}\,
   \Bigl(-z^{-n-1} +J^\infty z^{-n} +  \cdots \Bigr)
   \otimes E^-(\lambda,z)\, 1.
   \label{eq:rel1}
\end{align}
Hence, the right-hand side of \eqref{eq:rel1} vanishes for $m\geq n$; 
for $m=n-1$ it equals $-1$; and for $m=n-2$ it equals $J^\infty-\lambda=\tilde J^\infty$.

Hence $(W^\infty(\mathfrak g_2)\otimes V_{\sqrt{-1} \mathbb Z})^{\mathbb C^*}$ contains a subalgebra, which is freely and strongly generated by 
Heisenberg element $\tilde J$, two fields $X$, $Y$ of weights $\frac n2$, and fields $\omega_2^\infty, \omega_3^\infty,\cdots \omega_{n-1}^\infty$ with the OPEs
\begin{align} \nonumber
\tilde J^\infty(z) \tilde J^\infty(w)&\sim -(z-w)^{-2}, \\ \label{eq:newope}
X(z)Y(w)&\sim - (z-w)^{-n}+\tilde J^\infty(w) (z-w)^{-n+1}+\cdots\\
\tilde J^\infty (z) X(w)&\sim X(w)(z-w)^{-1},\quad \tilde J^\infty(z)Y(w)\sim -Y(w)(z-w)^{-1}. \nonumber
\end{align}
Then by \cite{LS}, the $\mathbb C^*$-orbifold $\left(W^{\infty}(\mathfrak{sl}_{n|1})\otimes V_{\sqrt{-1}\mathbb{Z}}\right)^{\mathbb{C}^*}$ contains $W^{-n}(\mathfrak{sl}_n, f_{\mathrm{sub}})$ as a vertex  subalgebra.
\end{proof}

\begin{conj}
Similar embedding (\ref{eq:emb}) exists for the pair $(\mathfrak {so}_{2n+1},\mathfrak{osp}_{2|2n})$, namely 
\begin{equation} \label{eq:emb}
W^{-2n+1}(\mathfrak {so}_{2n+1},f_{\mathrm{sub}}) \hookrightarrow  (W^{\infty}(\mathfrak{osp}_{2|2n})\otimes V_{\sqrt{-1}\mathbb Z} )^{\mathbb C^*} 
\end{equation}

\end{conj}

\subsection{Relation with spectral flow}

We let $V^1=W_{\infty}(\mathfrak {sl}_{n|1})$, $V^2=W_{-n}(\mathfrak{sl}_n,f_{\mathrm{sub}})$ and $L=\sqrt {-1} \mathbb Z$.
Applying Li's $\Delta$-operator  \cite{L} associated to $-\tilde J^{\infty}$
\[
\Delta(-\widetilde{J}^{\infty},z)
:=
z^{-\widetilde{J}^{\infty}_{(0)}}
\exp\left(
\sum_{n=1}^{\infty}
\frac{\widetilde{J}^{\infty}_{(n)}}{n}
(-z)^{-n}
\right),
\]
we have 
\[
\Delta(-\widetilde{J}^{\infty},z) \tilde J^\infty =\tilde J^\infty+z^{-1},\qquad  \Delta(-\widetilde{J}^{\infty},z)X=z^{-1}X, \qquad \Delta(-\widetilde{J}^{\infty},z)Y=z Y.
\]
Let $\sigma$ be the corresponding spectral flow for $V^2$ such that 
$
\sigma(v_{(n)})
$
is given by the 
\[
\text{Res}_{z} z^n Y(\Delta(-\tilde J^\infty,z) v,z)
\] for any $v\in V^2$. Hence we deduce that 
\[
\sigma(X_{(n)})=X_{(n-1)}, \qquad \sigma (Y_{(n)})=Y_{(n+1)},\qquad \sigma(\tilde J^\infty_{(n)})=\tilde J^\infty_{(n)}+\delta_{n,0}.
\]
When $n=2$ for example, $\sigma$ is the spectral flow induced by the simple coroot of $\mathfrak {sl}_2$ at the critical  level.

\begin{theorem} \label{thm:sflow}
Let $\mu \in L$ be such that $\bar{\mu} = i\mu \in \mathbb{Z}$. 
For any $M \in \mathcal{C}_1$, the linear isomorphism determined by
\begin{align*}
\Phi:\sigma^{\bar{\mu},*}(F_{0}(M))& \xrightarrow{\sim} F_{\mu}(M),\\
\sigma^{\bar \mu,*} (m\otimes p(\lambda) e^{\nu}) &\mapsto \epsilon(\nu,\mu)  m\otimes p(\lambda)e^{\nu+\mu} 
\end{align*}
is an isomorphism of $V^2$-modules,
where $p(\lambda)=p(\lambda,\partial\lambda,\ldots)$ is a polynomial in the derivatives $\partial^{m}\lambda$ $(m\geq 0)$ of the Heisenberg element $\lambda$ associated with the lattice $\sqrt{-1}\mathbb{Z}$, and $\epsilon(\cdot,\cdot)$ is the $2$-cocycle of the lattice vertex algebra $V_{\sqrt{-1}\mathbb{Z}}$ as in the proof of Theorem~\ref{thm:emb}.
\end{theorem}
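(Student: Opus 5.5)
Both modules have the same underlying space up to the shift $e^{\nu}\mapsto e^{\nu+\mu}$ in the lattice factor. The plan is to check that $\Phi$ intertwines the $V^2$-actions on a strong generating set of $V^2$. By Corollary~\ref{thm:invgen} (transported through (\ref{eq:iso2})), $V^2=(V^1\otimes V_L)^{\mathbb C^*}$ is strongly generated by $\tilde J^\infty$, $X=:G^{+,\infty}\otimes e^{\lambda}:$ and $Y=\epsilon(\lambda,\lambda):G^{-,\infty}\otimes e^{-\lambda}:$. Since a linear bijection that intertwines the modes of a set of strong generators intertwines all modes (by Borcherds' identity / normal ordered products), this suffices.

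\textbf{Step 1 (bijectivity and well-definedness).} By Corollary~\ref{cor:func}, $F_0(M)\cong\bigoplus_{\nu} M_{-\nu}\otimes\pi_{\nu}$ and $F_\mu(M)\cong\bigoplus_\nu M_{-\nu}\otimes\pi_{\nu+\mu}$, with $\nu$ running over $L$. The map $p(\lambda)e^\nu\mapsto p(\lambda)e^{\nu+\mu}$ is the standard identification of Fock spaces $\pi_\nu\to\pi_{\nu+\mu}$ as $\mathfrak h^{<0}$-modules, and the cocycle factor is a nonzero scalar. So $\Phi$ is a linear isomorphism that preserves the $\mathbb C^*$-invariance (the $T$-degree in $V_{\mu+L}$ is measured relative to $\mu$).

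\textbf{Step 2 (the Heisenberg field).} On $F_\mu(M)$ the zero mode $\lambda_{(0)}$ acts on $e^{\nu+\mu}$ by $(\lambda,\nu+\mu)=(\lambda,\nu)+(\lambda,\mu)$, while the positive modes are unchanged. Since $J^\infty$ acts on $M$ identically in both modules, $\tilde J^\infty_{(n)}$ on $F_\mu(M)$ corresponds to $\tilde J^\infty_{(n)}+c\,\delta_{n,0}$ on $F_0(M)$ with $c=-(\lambda,\mu)=\bar\mu$ (fixing the sign convention $(\lambda,\lambda)=-1$, $\mu=\bar\mu\cdot(\text{generator})$). This is exactly $\sigma^{\bar\mu}(\tilde J^\infty_{(n)})$ from the formula $\sigma(\tilde J^\infty_{(n)})=\tilde J^\infty_{(n)}+\delta_{n,0}$.

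\textbf{Step 3 (the fields $X,Y$; main obstacle).} Here I would use the vertex operator formula for lattice VOAs:
\[
Y(e^{\lambda},z)e^{\nu+\mu}=\epsilon(\lambda,\nu+\mu)\,z^{(\lambda,\nu+\mu)}E^-(-\lambda,z)E^+(-\lambda,z)e^{\lambda+\nu+\mu}.
\]
Compared with the action on $e^\nu$, the only differences are
\begin{enumerate}
\item the extra power $z^{(\lambda,\mu)}=z^{-\bar\mu}$, which shifts $X_{(n)}\mapsto X_{(n-\bar\mu)}=\sigma^{\bar\mu}(X_{(n)})$, and dually $Y_{(n)}\mapsto Y_{(n+\bar\mu)}$;
\item the cocycle discrepancy $\epsilon(\lambda,\nu+\mu)/\epsilon(\lambda,\nu)=\epsilon(\lambda,\mu)$.
\end{enumerate}
The expected difficulty is the bookkeeping of these cocycle factors, which is exactly why $\Phi$ carries the factor $\epsilon(\nu,\mu)$. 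Using bimultiplicativity of $\epsilon$, one has $\epsilon(\lambda+\nu,\mu)=\epsilon(\lambda,\mu)\epsilon(\nu,\mu)$. Hence
\[
\Phi\bigl(\sigma^{\bar\mu}(X_{(n)})\,(m\otimes p\,e^\nu)\bigr)=X_{(n)}\,\Phi(m\otimes p\,e^\nu),
\]
and similarly for $Y$ with $-\lambda$. For rank one one may take $\epsilon$ trivial up to a sign, so this reduces to a sign check. If the sign ever fails, I would instead absorb it into $\Phi$ by a choice of $\epsilon(\nu,\mu)$ satisfying the needed 2-cocycle identity.

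\textbf{Step 4 (conclusion).} Combining Steps 2--3, $\Phi$ intertwines the $\sigma^{\bar\mu}$-twisted action with the action on $F_\mu(M)$ for the strong generators. Therefore it does so for all of $V^2$, giving the $V^2$-module isomorphism $\sigma^{\bar\mu,*}(F_0(M))\cong F_\mu(M)$. This is Theorem~\ref{thm:catequiv}(2).
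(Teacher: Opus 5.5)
Your proposal is correct and is essentially the paper's proof. You check $\Phi$ on the generators $\tilde J^\infty$, $X$, $Y$: the shift of $\lambda_{(0)}$ by $(\lambda,\mu)$ produces the twist of $\tilde J^\infty_{(0)}$, and the extra factor $z^{\pm(\lambda,\mu)}$ in $Y(e^{\pm\lambda},z)$ produces the index shifts of $X_{(n)}$ and $Y_{(n)}$.

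Two small points:
\begin{itemize}
\item The cocycle bookkeeping is exactly the $2$-cocycle identity $\epsilon(\lambda,\nu)\epsilon(\lambda+\nu,\mu)=\epsilon(\lambda,\nu+\mu)\epsilon(\nu,\mu)$, and likewise with $-\lambda$ in place of $\lambda$ for $Y$. So the check always succeeds, and your fallback of modifying $\Phi$ (which would change the statement being proved) is never needed.
\item Strong generation of $V^2$ by $\tilde J^\infty,X,Y$ does not follow from Corollary~\ref{thm:invgen}, which concerns the different algebra $(W_{-n}(\mathfrak{sl}_n,f_{\mathrm{sub}})\otimes V_{\mathbb Z})^{\tilde J[t]}$. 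Instead, take it from (\ref{eq:iso2}) together with the strong generators $J,G^\pm$ of $W_{-n}(\mathfrak{sl}_n,f_{\mathrm{sub}})$.
\end{itemize}
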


\begin{proof}
It suffices to show that $\Phi$ is a $V^2$-module homomorphism, namely $\Phi$ intertwines with $X_{(n)}, Y_{(n)}$ and $\tilde J^\infty_{(n)}$ for $n\in \mathbb Z$.
We first check the action of $\tilde J^\infty_{(n)}$. By direct computation,
\begin{align*}
\tilde J^\infty_{(n)} \sigma^{\bar\mu,*}(m\otimes p(\lambda) e^\nu)
&= \sigma^{\bar\mu,*}\!\left(\sigma^{-\bar\mu}(\tilde J^\infty_{(n)})(m\otimes p(\lambda) e^\nu)\right) \\
&= \sigma^{\bar\mu,*}\!\left((\tilde J^\infty_{(n)}-\bar\mu\,\delta_{n,0})(m\otimes  p(\lambda) e^\nu)\right) \\
&= \sigma^{\bar\mu,*}(m\otimes \tilde J^\infty_{(n)} p(\lambda) e^\nu)
   -\bar\mu\,\delta_{n,0}\,\sigma^{\bar\mu,*}(m\otimes  p(\lambda) e^\nu).
\end{align*}
As $\tilde J^\infty_{(n)} p(\lambda) e^\nu=(\tilde J^\infty_{(n)} p(\lambda) ) e^\nu - (\lambda,\nu) \delta_{n,0} p(\lambda)e^{\nu}$, we deduce that
\begin{align*}
\Phi\!\left(\tilde J^\infty_{(n)} \sigma^{\bar\mu,*}(m\otimes p(\lambda)e^\nu)\right)
&= \epsilon(\nu,\mu)(m\otimes  (\tilde J^\infty_{(n)} p(\lambda) )e^{\nu+\mu } - (\lambda,\nu) \delta_{n,0} m\otimes p(\lambda) e^{\nu+\mu}
   -\bar\mu\,\delta_{n,0}\, m\otimes p(\lambda) e^{\nu+\mu } )\\
&= \epsilon(\nu,\mu)\tilde J^\infty_{(n)} (m\otimes p(\lambda) e^{\nu+\mu })  \\
&= \tilde J^\infty_{(n)} \Phi\!\left(\sigma^{\bar\mu,*}(m\otimes p(\lambda)e^\nu)\right).
\end{align*}
Next we check the action of $X_{(n)}$. Again by direct computation,
\begin{align*}
X_{(n)} \sigma^{\bar\mu,*}(m\otimes p(\lambda) e^\nu)
&= \sigma^{\bar\mu,*}\!\left(\sigma^{-\bar\mu}(X_{(n)})(m\otimes p(\lambda) e^\nu)\right) \\
&= \sigma^{\bar\mu,*}\!\left(X_{(n+\bar\mu)}(m\otimes p(\lambda) e^\nu)\right) \\
&= \sigma^{\bar\mu,*}\!\left((G^{+,\infty}\otimes e^{\lambda})_{(n+\bar\mu)}(m\otimes  p(\lambda) e^\nu)\right) \\
&= \sigma^{\bar\mu,*}\!\left(\Res_z z^{n+\bar\mu}
   Y_M(G^{+,\infty},z)m \otimes Y_{V_L}(e^{\lambda},z)p(\lambda) e^\nu \right).
\end{align*}
Applying $\Phi$ gives
\begin{align*}
\Phi\!\left(X_{(n)} \sigma^{\bar\mu,*}(m\otimes p(\lambda) e^\nu)\right)
&= \Res_z z^{n+\bar\mu}z^{-(\lambda,\mu)} \epsilon(\nu,\mu)
   Y_M(G^{+,\infty},z)m
   \otimes Y_{V_L}(e^{\lambda},z) p(\lambda) e^{\nu+\mu} 
\\
&= \Res_z z^n  \epsilon(\nu,\mu)
   Y_M(G^{+,\infty},z)m
   \otimes Y_{V_L}(e^{\lambda},z) p(\lambda)e^{\nu+\mu} \\
&= X_{(n)} \Phi\!\left(\sigma^{\bar\mu,*}(m\otimes p(\lambda)e^{\nu})\right),
\end{align*}
where the second equality uses the relation
\[
Y_{V_{L}}(e^\lambda,z)p(\lambda)e^{\nu}=\epsilon(\lambda,\mu) z^{(\lambda,\mu)} \exp(\sum_{n>0} \frac{\lambda_{(-n)}}{n}z^n) \exp(\sum_{n<0} \frac{\lambda_{(-n)}}{n}z^n) p(\lambda)e^{\nu+\lambda}.
\]
A similar computation shows that
\[
\Phi\!\left(Y_{(n)} \sigma^{\bar\mu,*}(m\otimes p(\lambda) e^\nu)\right)
= Y_{(n)} \Phi\!\left(\sigma^{\bar\mu,*}(m\otimes p(\lambda) e^\nu)\right).
\]
Therefore, $\Phi$ is a $V^2$-module homomorphism, completing the proof.
\end{proof}

Let \(V^1\) be an arbitrary \(L\)-graded vertex algebra for an integral lattice $L$, and  \( V^2=(V^1\otimes V_{\mathbb Z})^T\cong \oplus_{\lambda\in L} V^1_{-\lambda} \otimes \pi_{\lambda}\) as before.  Each $\alpha\in L^*$ defines  a  spectral flow automorphism $\sigma_{\alpha}$ determined by the \(\Delta\)-operator associated with \(\alpha\), namely
\[
\Delta(\alpha,z)
=
z^{\alpha_{(0)}}
\exp\left(
\sum_{n=1}^{\infty}
\frac{\alpha_{(n)}}{-n}
(-z)^{-n}
\right).
\]
Hence for any $h\in \mathfrak h=L\otimes_{\mathbb Z} \mathbb C $ and any $X_{-\lambda}\otimes e^{\lambda} \in V^1_{-\lambda}\otimes \pi_{\lambda}$, we have 
\[
\Delta(\alpha,z) h= h+ \alpha(h) z^{-1}, \qquad
 \Delta(\alpha,z) X_{\lambda}=z^{ \alpha(\lambda)} X_{\lambda}.
\]
The statement of Theorem~\ref{thm:sflow} remains valid in this more general setting.
\begin{cor} \label{cor:sflow}
For any $M \in \mathcal{C}_1$ and $\alpha\in L^*$, there is an isomorphism
\[
\sigma_{\alpha}^{*} (F_{0}(M)) \cong F_{\alpha}(M)
\]
of \(V^2\)-modules.
\end{cor}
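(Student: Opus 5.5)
The plan is to mimic the proof of Theorem~\ref{thm:sflow} in the general lattice setting. Write $F_0(M)=\bigoplus_{\lambda\in L} M_{\lambda}\otimes\pi_{-\lambda}$ and $F_\alpha(M)=\bigoplus_{\lambda\in L} M_\lambda\otimes \pi_{-\lambda+\alpha}$, as in Corollary~\ref{cor:func}. Every Fock module $\pi_\nu$ is spanned by vectors $p(h)e^{\nu}$, with $p$ a polynomial in the negative modes of the Heisenberg fields $h\in\mathfrak h$. I would define a linear map
\[
\Phi_\alpha:\sigma_\alpha^*(F_0(M))\longrightarrow F_\alpha(M),\qquad \sigma_\alpha^*\bigl(m\otimes p(h)e^{\nu}\bigr)\longmapsto \epsilon(\nu,\alpha)\, m\otimes p(h)e^{\nu+\alpha},
\]
with $\epsilon$ a suitable $2$-cocycle extended to $L\times L^*$, or trivial after choosing a section. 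This map is clearly a linear bijection, because $e^{\nu}\mapsto e^{\nu+\alpha}$ identifies $\pi_{\nu}$ and $\pi_{\nu+\alpha}$ as modules over the negative-mode Heisenberg algebra. The whole content is therefore to show that $\Phi_\alpha$ intertwines the $V^2$-actions.

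Since $V^2=(V^1\otimes V_L)^T$ is spanned by elements $a\otimes w$ with $a\in V^1_{\lambda}$ and $w\in\pi_{-\lambda}$, I will check the intertwining on such elements. The key tool is Li's formula for the twisted action on $\sigma_\alpha^*(N)$: $Y_{\sigma^*_\alpha N}(v,z)=Y_N(\Delta(\alpha,z)v,z)$. By the formulas recorded just before the statement, $\Delta(\alpha,z)$ shifts $h$ to $h+\alpha(h)z^{-1}$ and multiplies a vector of charge $\lambda$ by $z^{\alpha(\lambda)}$ (here $\alpha$ pairs only with the lattice part). For the Heisenberg generators this gives the zero-mode shift by $\alpha(h)$. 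That shift matches the extra eigenvalue $(h,\alpha)$ on $e^{\nu+\alpha}$, exactly as in the $\tilde J^\infty_{(n)}$ computation of Theorem~\ref{thm:sflow}.

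For a general charged element $a\otimes e^{-\lambda}$ (and then $a\otimes p(h)e^{-\lambda}$) I will compare $Y_{V_L}(e^{-\lambda},z)p(h)e^{\nu}$ with $Y_{V_L}(e^{-\lambda},z)p(h)e^{\nu+\alpha}$. The vertex operator formula $Y(e^{\beta},z)=\epsilon\, e^{\beta}z^{\beta_{(0)}}E^-(\beta,z)E^+(\beta,z)$ shows that the only differences are a factor $z^{(-\lambda,\alpha)}$ and a cocycle ratio. The power of $z$ is cancelled exactly by the factor $z^{\alpha(\lambda)}$ produced by $\Delta(\alpha,z)$, and the $E^\pm$ parts are unchanged. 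Descendants $p(h)e^{-\lambda}$ are then handled using the fact that $\Delta(\alpha,z)$ is an automorphism-twist compatible with normal ordering. Alternatively, one can note that $V^2$ is generated (not strongly) by $\mathfrak h$ together with $V^1_\lambda\otimes e^{-\lambda}$, so checking generators suffices by Li's theorem that $\sigma_\alpha^*$ of a module is again a module and that intertwining on generators implies intertwining everywhere.

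The main obstacle is the cocycle bookkeeping. We need $\epsilon(\nu-\lambda,\alpha)\epsilon(-\lambda,\nu)=\epsilon(-\lambda,\nu+\alpha)\epsilon(\nu,\alpha)$ up to the sign conventions, which is the cocycle identity. This requires choosing $\epsilon$ bimultiplicative on $L\times L^*$, or absorbing signs by rescaling. For $\alpha\in L^*\setminus L$, the module $V_{\alpha+L}$ and its cocycle must be fixed compatibly. I would first try taking $\epsilon$ bimultiplicative, so that the identity holds automatically, and reduce the parity/commutator signs to those already appearing in Theorem~\ref{thm:sflow}.
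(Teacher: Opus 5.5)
Your proposal is correct and is essentially the paper's own argument. The paper likewise takes the linear bijection $m\otimes p(h)e^{\nu}\mapsto \epsilon(\nu,\alpha)\,m\otimes p(h)e^{\nu+\alpha}$ modeled on Theorem~\ref{thm:sflow} and checks that it intertwines the Heisenberg fields and the charged generators of $V^2$; your cocycle identity and your reduction to generators spell out what the paper leaves implicit. One small sign fix: on $a\otimes e^{-\lambda}$ the lattice charge is $-\lambda$, so $\Delta(\alpha,z)$ contributes $z^{\alpha(-\lambda)}=z^{(-\lambda,\alpha)}$, and this is exactly equal to (not the inverse of) the extra factor of $Y_{V_L}(e^{-\lambda},z)p(h)e^{\nu+\alpha}$ relative to $Y_{V_L}(e^{-\lambda},z)p(h)e^{\nu}$.
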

\begin{proof}
One can construct a linear  isomorphism as in Theorem \ref{thm:sflow} and show that it intertwines with Heisenberg elements in $\pi$ and elements of the form  $X_{-\lambda} \otimes e^{\lambda}\in V^1_{-\lambda}\otimes \pi_{\lambda}$ for $\lambda \in L$ by a similar argument.
\end{proof}

\subsection{Representation Theoretic Correspondence} \label{sec:repncorr}

In this section, we will study the representation theory of  the large level limit $W_{\infty}(\gg_2)$ of the principle $W$-algebra.

Fix  $N\geq 1$. We consider the free field algebra $F_N$ generated by the odd fields 
\[
X^\pm(z)=\sum_{n\in \frac{N+1}{2}+\mathbb Z} X^\pm_n z^{-n-\frac{N+1}{2}}
\]
of conformal weight $\frac{N+1}2$ with the OPE
\[
X^+(z) X^-(w) \sim \dfrac{1}{(z-w)^{N+1}},
\]
which is isomorphic to the centerless large level limit $W_{\infty}(\mathfrak{sl}_{N|1})$ for $N\geq 2$, and is also isomorphic to $W_{\infty}(\mathfrak{osp}_{2|N})$ if $N$ is even.

\begin{remark}
For $\mu\in \mathfrak h^*$, one can also define the full subcategory of $V$--\textup{wtmod} consisting of modules supported on $\mu+L$. In particular, when $\mu\in L$ and $V=W_{-h_1^\vee}(\mathfrak g_1, f_{\mathrm{sub}})$ with $\mathfrak g_1=\mathfrak{sl}_n$ (resp., $\mathfrak g_1=\mathfrak{so}_{2n+1}$), this subcategory is equivalent to the category of finitely generated $\mathbb{Z}$-graded $F_n$-modules (resp. $F_{2n}$-modules).
\end{remark}

Let $\gg_N=\text{Span}\{ X^\pm_n\,|\, n\in \frac{N+1}{2}+\mathbb Z\}\oplus \mathbb C$ denote the associated Lie superalgebra
with the supercommutator
\begin{equation} \label{eq:comm}
[X^+_n,X^-_m]={n+\frac{N-1}{2} \choose N} \delta_{n+m,0}.
\end{equation}
By (\ref{eq:comm}), the center of the universal enveloping algebra $U(\gg_N)$, denoted by $Z(\gg_N)$ is the exterior algebra generated by  $X^\pm_{i}$ for $i=-\frac{N-1}{2},-\frac{N-1}{2}+1,\cdots, \frac{N-1}{2}$.
In particular, $Z(\gg_1)=\mathbb C[X^\pm_0]\cong \bigwedge \mathbb C^2$ and $Z(\gg_2)=\mathbb C[X^\pm_{\frac12},X^\pm_{-\frac12}]\cong \bigwedge \mathbb C^4$.

Define
\[
\gg_{N, +}:=\text{Span}\{ X^\pm_n \mid n>\tfrac{N-1}{2}\},\qquad
\gg_{N, -}:=\text{Span}\{ X^\pm_n \mid n<-\tfrac{N-1}{2}\},
\]
and the center subalgebra
\[
Z_N:=\text{Span}\{  X^\pm_n \mid |n|\le \tfrac{N-1}{2}\}\oplus \mathbb C 1
\]
with a natural $\mathbb Z$-grading such that $\deg X^\pm_{n}=\pm 1$  and $\deg 1=0$.
 We have a direct sum decomposition
\[
\gg_N=\gg_{N,<0}\oplus Z_N\oplus \gg_{N, >0}.
\]

\begin{lemma}\label{lem:lower-bounded}
Let $M$ be a finitely generated $\mathbb Z$-graded module over the free field algebra $F_N$.
Then $M$ is lower bounded, i.e.\ there exists $D\in\mathbb Z$ such that $M_d=0$ for all $d<D$.
\end{lemma}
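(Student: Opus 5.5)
The plan is a degree-counting argument. Finitely many generators force a finite set of lowest degrees, and every mode that could move a vector down comes from $\gg_N$. The grading $d$ is the conformal ($L_0$-) grading, with $X^\pm_n$ shifting degree by $-n$. If instead one uses the $\mathbb Z$-grading by Heisenberg charge $\deg X^\pm_n=\pm1$, lower boundedness is false already for the free module, since $(X^-_{n})$ with $n\ll 0$ generate arbitrarily negative charge. So the intended grading must be the conformal one, and I will take the grading on $M$ to be compatible with $L_0$, with $\deg X^\pm_n = -n$ for the action.

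First, choose homogeneous generators $m_1,\dots,m_k$ of $M$ of degrees $d_1,\dots,d_k$. Since $F_N$ is strongly generated by $X^\pm$, the module $M$ is spanned by vectors $X^{\epsilon_1}_{n_1}\cdots X^{\epsilon_r}_{n_r} m_j$. Using the supercommutator \eqref{eq:comm}, which is central (a scalar), I can reorder any such monomial PBW-style at the cost of lower-length terms. I then order so that the modes with $n>0$ (lowering degree) act first on $m_j$.

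Second, I use the truncation condition. For each $j$ there is $K_j$ with $X^\pm_n m_j=0$ for $n>K_j$. The odd modes $X^\pm_n$ with $0<n\le K_j$ are finitely many and square to zero, because $[X^\pm_n,X^\pm_n]=0$ from \eqref{eq:comm}. Hence the positive-mode part $\bigwedge\mathrm{Span}\{X^\pm_n:0<n\le \max K_j\}$ is finite-dimensional, so the vectors $u\cdot m_j$ with $u$ a product of positive modes have degrees bounded below, say by $D_0=\min_j d_j-\sum_{0<n\le K}2n$.

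Third, I would show that after reordering every spanning vector has the form $v\cdot u\cdot m_j$, where $v$ is a product of modes with $n\le 0$. Such modes raise or preserve degree, so every vector of $M$ has degree at least $D_0$.

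The main obstacle is the reordering. Commuting a positive mode past a nonpositive one produces only scalars via \eqref{eq:comm}, and these terms have shorter length, so induction on monomial length closes the argument. The convention subtlety about which grading is meant should be stated up front.
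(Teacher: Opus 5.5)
Your proposal is correct and follows the same route as the paper's proof: truncation on finitely many homogeneous generators, together with the fact that the odd positive modes $X^\pm_n$ square to zero and anticommute, so only finitely many degree-lowering modes can act nontrivially on each generator. You are more careful than the paper on two points it leaves implicit: the PBW reordering that uses the scalar supercommutators, and the choice of grading, where your remark that the statement fails for the charge grading $\deg X^\pm_n=\pm1$ (already for the vacuum module) is correct.
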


\begin{proof}
Let $m_1,\dots,m_k$ be homogeneous generators of $M$, with $\deg m_i=d_i$.
By the truncation property of vertex algebra modules, for each $i$ there exists an integer $N_i>0$
such that
\[
X^\pm_n m_i = 0 \qquad \text{for all } n\geq N_i.
\]

Any element of $M$ is a linear combination of vectors of the form
\[
X^{\epsilon_1}_{n_1}\cdots X^{\epsilon_r}_{n_r} m_i,
\qquad
\epsilon_j\in\{+,-\},
\]
where the indices $n_j$ are arbitrary integers.
Since all operators $X^\pm_n$ are odd, 
each mode $X^\pm_n$ can appear at most once in such a monomial.
Moreover, if $n_j\geq N_i$ then $X^{\epsilon_j}_{n_j}m_i=0$, so necessarily $n_j<N_i$ for all $j$.
Hence the module $M$ is lower bounded.
\end{proof}

Let $M$ be a nontrivial $\mathbb Z$-graded $F_N$-module. Define the vacuum space
\[
\Omega(M)=\{m\in M\mid \gg_{N, >0}m=0\},
\]
which is a $\mathbb Z$-graded $Z(\gg_N)$-module with a compatible $\mathbb{C}^*$-action.

\begin{lemma} \label{lem:mod}
If $M$ is lower bounded, then $\Omega(M)\neq 0$. Moreover if $M$ is irreducible, then $\Omega(M)$ is one-dimensional.
\end{lemma}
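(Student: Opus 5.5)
The plan is to use the conformal-weight grading by $L_0$ (or the Hamiltonian) together with lower boundedness. By Lemma~\ref{lem:lower-bounded}, or directly by the hypothesis, $M$ is lower bounded in the grading for which $X^\pm_n$ lowers degree by $n$. Here $X^\pm_n$ denotes the mode of weight $-n$, acting as $M_d\to M_{d-n}$; this is the grading convention under which $\gg_{N,>0}$ lowers degree.

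\emph{First claim.} Pick a nonzero homogeneous vector $m\in M_d$ with $d$ minimal among degrees where $M$ is nonzero. For every $n>\tfrac{N-1}{2}>0$, the vector $X^\pm_n m$ lies in $M_{d-n}$. Since $d-n<d$, this space is zero. Hence $\gg_{N,>0}m=0$, so $m\in\Omega(M)$ and $\Omega(M)\neq0$. If the grading is by $\tfrac12\mathbb Z$ rather than $\mathbb Z$ (the modes are half-integral when $N$ is even), the same argument applies verbatim.

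\emph{Second claim.} Assume $M$ is irreducible and take a nonzero $v\in\Omega(M)$, which we may choose homogeneous since $\Omega(M)$ is graded.

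By the PBW theorem for $U(\gg_N)$ with respect to the triangular decomposition $\gg_N=\gg_{N,<0}\oplus Z_N\oplus\gg_{N,>0}$, we get
\[
M=U(\gg_N)v=U(\gg_{N,<0})\,U(Z_N)\,v .
\]
This uses irreducibility together with the fact that $F_N$-submodules coincide with $\gg_N$-submodules, because $F_N$ is strongly generated by $X^\pm$.

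The elements of $Z_N$ are central in $U(\gg_N)$. Each odd central element $z=X^\pm_i$ therefore acts on $M$ by an odd $\gg_N$-endomorphism, and it satisfies $z^2=\tfrac12[z,z]=0$. The subspace $zM$ is a submodule, and it is proper: if $zM=M$ we would have $M=zM=z^2M=0$. Irreducibility then forces $zM=0$. So every $X^\pm_i$ with $|i|\le\tfrac{N-1}{2}$ acts by zero, and hence $M=U(\gg_{N,<0})v$.

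Next, write a general element of $M$ as $w=c\,v+\sum(\text{monomials of positive length in }\gg_{N,<0})v$. Every monomial of positive length raises the degree strictly. Fix the degree of $v$: any other homogeneous $v'\in\Omega(M)$ with $\deg v'=\deg v$ must then equal $c\,v$.

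For $v'$ of a different degree, apply the same reasoning with the roles of $v$ and $v'$ exchanged. This gives $\deg v\ge\deg v'$ and $\deg v'\ge\deg v$, a contradiction unless both have the same degree (note also that $M=U(\gg_{N,<0})v$ shows all degrees of $M$ are at least $\deg v$). A subtlety is that a monomial in $\gg_{N,<0}$ could itself lie in $\Omega(M)$ without being a multiple of $v$.

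\emph{Main obstacle.} The main obstacle is to rule out such nonzero vectors $u=P v\in\Omega(M)$ with $P$ a product of negative modes. The approach is as follows: if such $u$ existed, then $U(\gg_N)u=U(\gg_{N,<0})u$ would be a submodule not containing $v$, because its degrees are strictly above $\deg v$. This contradicts irreducibility. Hence $\Omega(M)=\mathbb C v$.
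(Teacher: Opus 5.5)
Your proof is correct. The first claim is argued exactly as in the paper: a vector of minimal degree is killed by $\gg_{N,>0}$. You only need $\tfrac{N-1}{2}\ge 0$ there; the strict inequality you wrote fails for $N=1$.

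For the second claim your route differs from the paper's. The paper argues that $\Omega(M)$ must be an irreducible $Z_N$-module, because a proper $Z_N$-submodule $W$ would generate a proper submodule $U(\gg_{N,<0})W$ of $M$. It then uses that $Z(\gg_N)$ is an exterior algebra (remarked right after the lemma) to get one-dimensionality.

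You instead show directly that each odd central mode $z=X^\pm_i$, $|i|\le\tfrac{N-1}{2}$, acts by zero on an irreducible $M$, since $zM$ is a submodule and $z^2=0$. From this you deduce $M=U(\gg_{N,<0})v$ for every nonzero homogeneous $v\in\Omega(M)$. Comparing degrees then forces every homogeneous vacuum vector into $M_{\deg v}=\mathbb C v$.

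Your argument fills in two points the paper leaves implicit. First, it explains why the submodule generated by a proper $Z_N$-submodule of $\Omega(M)$ is actually proper, which needs a degree argument like yours. Second, it explains why irreducible $Z_N$-modules are one-dimensional: your $zM$ argument is the module-level form of the nilpotency of the odd generators. The paper's phrasing, in terms of $\Omega(M)$ as a $Z_N$-module, is shorter and matches the viewpoint of Theorem~\ref{thm:irred} and Corollary~\ref{thm:classification}.

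Your final \emph{main obstacle} paragraph is redundant. The preceding degree comparison already excludes a vacuum vector $Pv$ with $P$ of positive length, since such a vector would have degree strictly above $\deg v$.
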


\begin{proof}
Since $\gg_{N,>0}$ strictly lowers degree, a lowest-degree vector exists and is annihilated
by $\gg_{N,>0}$, hence $\Omega(M)\neq 0$.
If $\Omega(M)$ is not irreducible as a $Z_N$-module, then the submodules generated by its proper
$Z_N$-submodules under $U(\gg_{N,<0})$ would give a nontrivial decomposition of $M$,
contradicting irreducibility.
\end{proof}

Since $Z(\gg_N)$ is an exterior algebra, the only algebra character maps odd generators in $Z_N$ to $0$ and even center $1$ to $1$. We therefore fix an irreducible $Z_N$-module $\mathbb{C}_N$, where all odd generators in $Z_N$ acts trivially and the even center $1$ acts as an identity.
Define the induced module
\[
P_N
=
\text{Ind}_{Z_N\oplus \gg_{N, >0}}^{\gg_N}(\mathbb C_N)=U(\gg_N)\otimes_{U(Z_N\oplus \gg_{N,>0})}\mathbb C_N.
\]
As a vector space,
\[
P_N\cong U(\gg_{N,<0})\otimes \mathbb C_N \cong \bigwedge(\gg_{N,<0})\otimes \mathbb C_N.
\]
Since $\gg_{N,<0}$ is freely generated and purely odd, there are no nontrivial relations
that could produce a proper submodule. 
The following proposition is straightforward.
\begin{prop}
The induced module $P_N$ is irreducible and is isomorphic to the vacuum module.
\end{prop}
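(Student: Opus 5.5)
The plan is to identify $P_N$ with the Fock module of a Clifford algebra, deduce irreducibility from the nondegeneracy of the pairing \eqref{eq:comm}, and then compare with $F_N$ through the universal property of induction.

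\textbf{Step 1: nondegenerate pairing.} By \eqref{eq:comm}, $[X^+_n,X^-_{-n}]=\binom{n+\frac{N-1}{2}}{N}$. This binomial coefficient vanishes exactly when $n+\frac{N-1}{2}\in\{0,1,\dots,N-1\}$, that is, when $|n|\le\frac{N-1}{2}$. Hence for every mode $X^\pm_n\in\gg_{N,<0}$ there is a dual mode $X^\mp_{-n}\in\gg_{N,>0}$ with nonzero bracket. Any two other pairs of modes anticommute.

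So, modulo $Z_N$, the algebra generated by $\gg_{N,<0}\oplus\gg_{N,>0}$ is a Clifford algebra with a nondegenerate pairing between these two halves. The elements of $Z_N$ are central and kill the generating vector $\mathbf 1_N\in\mathbb C_N$, so they act by zero on all of $P_N\cong\bigwedge(\gg_{N,<0})\otimes\mathbb C_N$. Thus $P_N$ is exactly the Fock representation of this Clifford algebra.

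\textbf{Step 2: irreducibility.} Let $S\subset P_N$ be a nonzero submodule and take $0\neq v\in S$. Since $P_N$ is graded, we may assume $v$ is homogeneous.

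1. Write $v$ in the PBW basis of wedge monomials $X^{\epsilon_1}_{n_1}\cdots X^{\epsilon_r}_{n_r}\mathbf 1_N$, with distinct negative modes.
2. Choose a monomial in $v$ of maximal length $r$ with nonzero coefficient.
3. Apply the product of the dual annihilators $X^{-\epsilon_j}_{-n_j}\in\gg_{N,>0}$.

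Each dual annihilator either removes its partner mode, contributing a nonzero scalar, or kills the monomial. So the result is a nonzero multiple of $\mathbf 1_N$, and $\mathbf 1_N\in S$. Since $\mathbf 1_N$ generates $P_N$, we get $S=P_N$. This is the only nontrivial point: one must check that no cancellation occurs between monomials. Choosing a maximal-length monomial and using that distinct modes anticommute handles this, because any other monomial is either killed or reduced to a different basis vector.

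\textbf{Step 3: comparison with the vacuum module.} Write $X^\pm_n=X^\pm_{(n+\frac{N-1}{2})}$. In $F_N$ we have $X^\pm_{(j)}\mathbf 1=0$ for $j\ge0$, so $X^\pm_n\mathbf 1=0$ for all $n\ge-\frac{N-1}{2}$. This range is exactly $Z_N\oplus\gg_{N,>0}$ (with $1$ acting as the identity). The universal property of induction therefore gives a $\gg_N$-module map $P_N\to F_N$ with $\mathbf 1_N\mapsto\mathbf 1$.

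* \emph{Surjective:} $F_N$ is strongly generated by $X^\pm$. Hence it is spanned by monomials in $X^\pm_{(j)}$ with $j<0$ applied to $\mathbf 1$, i.e. by monomials in the modes of $\gg_{N,<0}$.
* \emph{Injective:} $P_N$ is irreducible by Step 2 and the map is nonzero.

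Hence $P_N\cong F_N$. As a consistency check, the free generation of $F_N$ also gives a matching PBW basis, so the graded characters of $P_N$ and $F_N$ agree.
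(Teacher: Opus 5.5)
Your proof is correct and follows the paper's route: identify $P_N$ with $\bigwedge(\gg_{N,<0})\otimes\mathbb C_N$ and rule out proper submodules. You also supply what the paper only asserts, namely that the absence of proper submodules comes from the nonvanishing of $\binom{n+\frac{N-1}{2}}{N}$ for $|n|>\frac{N-1}{2}$ (freeness of $\gg_{N,<0}$ alone would not give it), together with the explicit map to $F_N$ from the universal property of induction.

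One small quibble: ``we may assume $v$ homogeneous'' is not justified for an arbitrary submodule. It is also unnecessary, because your maximal-length argument annihilates every other monomial of length at most $r$ regardless of degree.
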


\begin{theorem}\label{thm:irred}
For any finitely generated $F_N$-module $M$, we have an isomorphism of $F_N$-modules
\[
M\cong \mathrm{Ind}_{Z_N\oplus \gg_{N,>0}}^{\gg_N}(\Omega(M)).
\] 
In particular, every  finitely generated irreducible $F_N$-module is isomorphic to
$P_N$.
\end{theorem}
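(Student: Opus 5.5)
The plan is to compare $M$ with the module induced from its vacuum space and show that the natural comparison map is bijective. Here $M$ is a finitely generated $\mathbb Z$-graded $F_N$-module. By Lemma~\ref{lem:lower-bounded} it is lower bounded, and by Lemma~\ref{lem:mod} we have $\Omega(M)\neq 0$.

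Since $\gg_{N,>0}$ annihilates $\Omega(M)$ and $Z_N$ preserves it, $\Omega(M)$ is a $Z_N\oplus\gg_{N,>0}$-module. The universal property of induction then gives a $\gg_N$-homomorphism
\[
\Theta:\mathrm{Ind}_{Z_N\oplus \gg_{N,>0}}^{\gg_N}(\Omega(M))\cong \textstyle\bigwedge(\gg_{N,<0})\otimes\Omega(M)\longrightarrow M,\qquad u\otimes w\mapsto uw .
\]
Elements of $M$ are finite sums over finitely many modes acting on a vector, so $M$ is a restricted $\gg_N$-module. Therefore $\Theta$ is also an $F_N$-module map; this can be checked on the generators $X^\pm$ via the reconstruction of the module structure from the modes.

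The key structural input is the commutator \eqref{eq:comm}. For $n>\frac{N-1}{2}$ the pairing $[X^+_n,X^-_{-n}]=\binom{n+\frac{N-1}{2}}{N}$ is nonzero, since $n+\frac{N-1}{2}\geq N$. Hence $\gg_{N,<0}\oplus\gg_{N,>0}$ together with the central $1$ forms a nondegenerate Clifford algebra $\mathrm{Cl}$. It commutes (in the super sense) with $Z_N$, and $U(\gg_N)\cong \mathrm{Cl}\otimes U(Z_N)$ as a (super)algebra.

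\emph{Injectivity.} Let $K=\ker\Theta$. If $K\neq 0$, then $K$ is a submodule of a lower bounded module, so we may pick a nonzero vector $k\in K$ of lowest degree within $K$. Every $X^\pm_n$ with $n>\frac{N-1}{2}$ maps $k$ to a vector of $K$ of lower degree, so $k$ is annihilated by $\gg_{N,>0}$. In $\bigwedge(\gg_{N,<0})\otimes\Omega(M)$ the vectors killed by $\gg_{N,>0}$ are exactly $1\otimes\Omega(M)$. Indeed, $\gg_{N,>0}$ acts on $\bigwedge(\gg_{N,<0})$ by nondegenerate contractions, so any element with a nonconstant wedge factor is detected by a suitable contraction. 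Thus $k=1\otimes w$ with $\Theta(k)=w=0$, which is a contradiction.

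\emph{Surjectivity.} Let $M'=\operatorname{im}\Theta$ and suppose $M/M'\neq 0$. Then $M/M'$ is finitely generated and lower bounded, so by Lemma~\ref{lem:mod} it has a nonzero vacuum vector $\bar m$. We must lift $\bar m$ to a vacuum vector of $M$. Take a lift $m$. The elements $x\in\gg_{N,>0}$ send $m$ into $M'$. Using the Clifford structure, define
\[
m' = e\,m,\qquad e=\prod_{n>\frac{N-1}{2}}\bigl(c_n^{-2}X^-_{-n}X^+_n X^+_{-n}X^-_{n}\bigr)\ \text{(finitely many factors)},
\]
with $c_n=\binom{n+\frac{N-1}{2}}{N}$. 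Here $e$ is the Clifford projector onto the Fock vacuum, built from the modes that act nontrivially on $m$, which are finitely many by truncation. The vector $m'$ lies in $\Omega(M)$, and it differs from $m$ by elements of $U(\gg_N)\gg_{N,>0}m\subset M'$. Hence $\bar m'=\bar m\neq0$ while $m'\in\Omega(M)\subset M'$, a contradiction. I expect this lifting step to be the main technical obstacle: one has to arrange the projector carefully so that it really is idempotent on the relevant finite set of modes and so that the correction terms land in $M'$.

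Finally, let $M$ be irreducible. By Lemma~\ref{lem:mod}, $\Omega(M)$ is one-dimensional, and by the isomorphism above it is an irreducible $Z(\gg_N)$-module. The exterior algebra $Z(\gg_N)$ has a unique irreducible module, namely $\mathbb C_N$, on which the odd generators act by zero (they are nilpotent) and $1$ acts as the identity. Hence $M\cong\mathrm{Ind}(\mathbb C_N)=P_N$.
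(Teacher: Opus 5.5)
Your overall route is the paper's: compare $M$ with $\mathrm{Ind}_{Z_N\oplus\gg_{N,>0}}^{\gg_N}(\Omega(M))$ through the canonical map. The paper only asserts that this map is surjective, and it infers bijectivity from the vector space identification with $U(\gg_{N,<0})\otimes\Omega(M)$. Your injectivity argument is correct and fills a real hole: a lowest-degree vector of $\ker\Theta$ is killed by $\gg_{N,>0}$; the joint kernel of the contractions on $\bigwedge(\gg_{N,<0})\otimes\Omega(M)$ is $1\otimes\Omega(M)$; and $\Theta$ is the inclusion there. (Minor point: what must be restricted is the induced module, not $M$. It is, since $X^\pm_n$ with $n\gg0$ anticommutes past the finitely many creation modes and kills $\Omega(M)$.)

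The surjectivity step does not work as written, because your $e$ is not the vacuum projector. In each factor the creation modes stand on the left. For $a=X^+_n$, $a^*=X^-_{-n}$, the operator $c_n^{-1}a^*a$ is the projection onto $\ker a^*$ along $\ker a$, so $e$ projects onto vectors killed by $X^\mp_{-n}$, not by $X^\pm_n$.

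Concretely, $X^+_nX^-_{-n}X^+_nX^+_{-n}X^-_n m=c_n\,X^+_nX^+_{-n}X^-_n m$, which is nonzero in general, so $em\notin\Omega(M)$. Moreover, the rightmost factor of $e$ is some $X^-_n$ with $n>\frac{N-1}{2}$, and it sends $m$ into $M'$. Hence $em\in M'$ and $\bar m'=0\neq\bar m$. Since $e$ has no identity term, $m'-m\notin U(\gg_N)\gg_{N,>0}m$ in general.

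The normalization is also off: $\{X^-_n,X^+_{-n}\}=\binom{-n+\frac{N-1}{2}}{N}=(-1)^Nc_n$, so for odd $N$ your factor squares to minus itself.

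The repair is to put the annihilators on the left:
\[
\pi_n=c_n^{-1}X^+_nX^-_{-n}=1-c_n^{-1}X^-_{-n}X^+_n,\qquad \pi'_n=(-1)^Nc_n^{-1}X^-_nX^+_{-n}=1-(-1)^Nc_n^{-1}X^+_{-n}X^-_n .
\]
These are pairwise commuting even idempotents with $X^+_n\pi_n=0=X^-_n\pi'_n$. For $n'>\frac{N-1}{2}$, $X^+_{n'}$ (resp.\ $X^-_{n'}$) commutes with every factor except $\pi_{n'}$ (resp.\ $\pi'_{n'}$).

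Take $S$ finite with $X^\pm_n m=0$ for all $n\notin S$, and set $e=\prod_{n\in S}\pi_n\pi'_n$. Then $em\in\Omega(M)\subset M'$. Expanding the product gives $e-1\in\sum_{n\in S}\bigl(U(\gg_N)X^+_n+U(\gg_N)X^-_n\bigr)$, so $m-em\in M'$ and hence $m\in M'$, which is the contradiction you wanted. With this correction your proof is complete, and it supplies exactly the surjectivity argument that the paper leaves out.
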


\begin{proof}
There is a canonical surjective homomorphism
\[
\text{Ind}_{Z_N\oplus \gg_{N,>0}}^{\gg_N}(\Omega(M))\twoheadrightarrow M,
\]
which must be an isomorphism since the left hand side, as a vector space, isomorphic to $U(\gg_{N,<0}) \otimes \Omega(M)$.
In particular if $M$ is irreducible, by Lemma \ref{lem:mod},
its vacuum space $\Omega(M)$ is a nonzero irreducible $Z_N$-module.
\end{proof}

\noindent We note that any cyclic module of $Z(\gg_N)$ has dimension at most $2^{2N}$, and hence every finitely generated $Z(\gg_N)$-module is  finite-dimensional. The following corollary then follows immediately from the preceding theorem.

\begin{cor}\label{thm:classification}
The abelian category of finitely generated $\mathbb Z$-graded $F_N$-modules
is equivalent to the abelian category of finite-dimensional $\mathbb Z$-graded modules over the exterior algebra
$Z(\gg_N)$,
via the mutually inverse functors
\[
M \mapsto \Omega(M),
\qquad
M_0 \mapsto 
\mathrm{Ind}_{Z_N\oplus \gg_{N, >0}}^{\gg_N}(M_0)=U(\gg_N)\otimes_{U(Z_N\oplus \gg_{N,>0})} M_0.
\]
\end{cor}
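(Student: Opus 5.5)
The plan is to derive the equivalence directly from Theorem~\ref{thm:irred}, together with the finite-dimensionality remark about $Z(\gg_N)$-modules. I will write $\mathcal M$ for the category of finitely generated $\mathbb Z$-graded $F_N$-modules and $\mathcal Z$ for the category of finite-dimensional $\mathbb Z$-graded $Z(\gg_N)$-modules. Here $Z(\gg_N)$ is identified with the image of $U(Z_N)$, on which the central element $1$ acts as the identity.

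\textbf{Step 1: the functors are well defined.} For $M\in\mathcal M$, Lemma~\ref{lem:lower-bounded} shows $M$ is lower bounded. Then $\Omega(M)$ is a graded $Z_N$-submodule, since $Z_N$ is central in $\gg_N$ and therefore preserves the kernel of $\gg_{N,>0}$.

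$\Omega(M)$ is finitely generated over $Z(\gg_N)$, which I argue as follows. Take finitely many homogeneous generators $m_i$ of $M$. By truncation, each $m_i$ is killed by $X^\pm_n$ for $n\gg0$. So $M=U(\gg_N)\{m_i\}$ is spanned by PBW monomials in $\gg_{N,<0}$, $Z_N$, and finitely many modes of $\gg_{N,>0}$ applied to the $m_i$, and the monomial cannot repeat any odd mode. Hence $M$ equals $U(\gg_{N,<0})\,U(Z_N)\,W$, where $W=\bigwedge(\text{finitely many positive modes})\{m_i\}$ is finite-dimensional. Intersecting with $\Omega(M)$, using the graded decomposition $U(\gg_{N,<0})=\mathbb C\oplus\gg_{N,<0}U(\gg_{N,<0})$ and the isomorphism of Theorem~\ref{thm:irred}, gives $\Omega(M)\cong M/\gg_{N,<0}M$. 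The latter is a quotient of $U(Z_N)W$, so it is finite-dimensional by the remark on cyclic $Z(\gg_N)$-modules.

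Conversely, for $M_0\in\mathcal Z$, extend $M_0$ to a $Z_N\oplus\gg_{N,>0}$-module by letting $\gg_{N,>0}$ act by zero. The induced module $U(\gg_{N,<0})\otimes M_0$ is generated by the finite-dimensional space $M_0$. It is a smooth $\gg_N$-module, since each vector is killed by $\gg_{N,>0}$ modes beyond a fixed bound. By the usual reconstruction for free field algebras (the fields $X^\pm(z)$ are local because the commutator \eqref{eq:comm} is a scalar), it is an $F_N$-module. Both functors are additive; $\Omega$ is left exact, and induction is exact because $U(\gg_N)$ is free over $U(Z_N\oplus\gg_{N,>0})$ by PBW.

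\textbf{Step 2: the natural isomorphisms.} Theorem~\ref{thm:irred} gives a natural isomorphism $\mathrm{Ind}(\Omega(M))\xrightarrow{\sim}M$. This is the counit of the adjunction $\mathrm{Ind}\dashv\Omega$ (Frobenius reciprocity: $\mathrm{Hom}_{\gg_N}(\mathrm{Ind}M_0,M)=\mathrm{Hom}_{Z_N\oplus\gg_{N,>0}}(M_0,M)=\mathrm{Hom}_{Z_N}(M_0,\Omega(M))$), so naturality is automatic. For the unit $M_0\to\Omega(\mathrm{Ind}M_0)$, I need the vacuum space of $U(\gg_{N,<0})\otimes M_0$ to be exactly $1\otimes M_0$.

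I prove this by a contraction argument. Suppose $v=\sum_S X_S\otimes a_S$, where the $X_S$ are ordered wedge monomials in $\gg_{N,<0}$, and suppose $v$ is killed by $\gg_{N,>0}$. Choose a monomial of maximal length that occurs in $v$ and pick a factor $X^\pm_{-n}$ of it. The paired mode $X^\mp_{n}\in\gg_{N,>0}$ acts by $[X^\mp_n,X^\pm_{-n}]$, which is a nonzero scalar since $n>\frac{N-1}{2}$ makes the binomial coefficient in \eqref{eq:comm} nonzero. This action removes that factor and does not cancel against the other terms, so $X^\mp_nv$ has a nonzero component. That contradicts $v\in\Omega$ unless every $S$ is empty. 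Hence the unit is an isomorphism, and we obtain the claimed equivalence of abelian categories.

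\textbf{Main obstacle.} The delicate point is Theorem~\ref{thm:irred} itself as it is used here, namely the injectivity of $\mathrm{Ind}(\Omega(M))\to M$. I would prove it by the same contraction argument, applied now inside $M$. Take a nonzero kernel element of minimal degree, written as $\sum X_S\otimes\omega_S$ with $\omega_S\in\Omega(M)$. Applying the dual positive modes lowers the wedge length while staying in the kernel, and after finitely many steps this produces a nonzero element of $1\otimes\Omega(M)$ in the kernel. That is impossible, because the map restricted to $1\otimes\Omega(M)$ is the inclusion.

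Surjectivity uses lower boundedness: the quotient $M/\mathrm{Ind}(\Omega(M))$ is lower bounded, so if it were nonzero it would have a nonzero vacuum vector by Lemma~\ref{lem:mod}. To rule this out, I would lift that vector to $M$ and induct on degree to show $M$ is generated by $\Omega(M)$. This is the step I expect to require the most care.
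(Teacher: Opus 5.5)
Your proposal is correct and follows the paper's route: the paper obtains the corollary directly from Theorem~\ref{thm:irred} (the canonical map $\mathrm{Ind}(\Omega(M))\to M$ is an isomorphism) together with the finite-dimensionality of finitely generated $Z(\gg_N)$-modules, and your contraction argument for $\Omega(\mathrm{Ind}\,M_0)=1\otimes M_0$ and your check that $\Omega(M)$ is finite-dimensional just make explicit the unit isomorphism and the well-definedness that the paper leaves implicit. Your final paragraph is not needed, since Theorem~\ref{thm:irred} can be cited; if you do reprove surjectivity there, you need no lower boundedness: use $q=c_n^{-1}\bigl(X^\mp_nX^\pm_{-n}q+X^\pm_{-n}X^\mp_nq\bigr)$ with $c_n=[X^\mp_n,X^\pm_{-n}]\neq 0$, and induct on the finite number of positive modes that do not annihilate $q$.
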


The Koszul dual algebra of $A_N:= Z(\mathfrak{g}_N)$ is the polynomial algebra
\[
A_N^! := \mathbb{C}[x_i, y_i \mid 1 \leq i \leq N],
\]
where $x_i$ and $y_i$ are dual to the generators
$X^+_{-\frac{N-1}{2}+i}$ and $X^-_{-\frac{N-1}{2}+i}$ in $Z_N$, respectively.
We endow $A_N^!$ with a bigrading by declaring
\[
\deg x_i = (1,1), \qquad \deg y_i = (1,-1),
\]
which yields a decomposition
$
A_N^! = \bigoplus_{a,b \in \mathbb{Z}} A_N^![a,b].
$
 Define
\[
\mathcal{P}_N^i := A_N \otimes \bigoplus_{b \in \mathbb{Z}} \bigl(A_N^![i,b]\bigr)^*.
\]
Then we obtain the Koszul resolution of $\mathbb{C}_N$:
\[
\cdots \longrightarrow \mathcal{P}_N^1 \longrightarrow \mathcal{P}_N^0 \longrightarrow \mathbb{C}_N \longrightarrow 0.
\]

\begin{remark}\label{rem:ext}
By a standard computation using the Koszul resolution, the $i$-th extension group satisfies
\[
\mathrm{Ext}^i_{A_N \rtimes \mathbb{C}^*}(\mathbb{C}_N,\mathbb{C}_N)
\simeq \mathrm{Hom}_{A_N \rtimes \mathbb{C}^*}(\mathcal{P}_N^i,\mathbb{C}_N)
\simeq A_N^![i,0].
\]
Consequently,
\[
\mathrm{Ext}^\bullet_{A_N \rtimes \mathbb{C}^*}(\mathbb{C}_N,\mathbb{C}_N)
\simeq \bigoplus_{i \in \mathbb{Z}} A_N^![i,0]
\simeq (A_N^!)^{\mathbb{C}^*},
\]
where the right-hand side can be identified with the coordinate ring of the variety of $n \times n$ matrices whose $2 \times 2$ minors vanish.
\end{remark}

\begin{remark}\label{rm:loewy}
The Loewy structure of a finitely generated $F_N$-module is completely
determined by the Loewy structure of its vacuum space as a
$Z(\gg_N)$-module.
Since free modules are projective, the regular module $Z(\gg_N)$ is projective; in particular, there are no nontrivial extensions with $Z(\gg_N)$ as a quotient. Moreover since $Z(\gg_N)$ is a local algebra, it is the projective cover of the trivial $Z(\gg_N)$-module.
In particular, the Lowey diagram of $Z(\gg_2)$ is explicitly described as below.
\end{remark}

\begin{tikzpicture}[->, thick, xscale=0.95, yscale=0.72] 
			\node (1) at (0,6) [] {$1$};
			\node (21) at (-4.5,3) [] {$X^+_{\frac{1}{2}}$};
			\node (22) at (-1.5,3) [] {$X^+_{-\frac{1}{2}}$};
			\node (23) at (1.5,3) [] {$X^-_{-\frac{1}{2}}$};
			\node (24) at (4.5,3) [] {$X^-_{\frac{1}{2}}$};
			\node (31) at (-7.5,0) [] {$X^+_{\frac{1}{2}}X^+_{-\frac{1}{2}}$};
			\node (32) at (-4.5,0) [] {$X^+_{\frac{1}{2}}X^-_{-\frac{1}{2}}$};
			\node (33) at (-1.5,0) [] {$X^+_{\frac{1}{2}}X^-_{\frac{1}{2}}$};
			\node (34) at (1.5,0) [] {$X^+_{-\frac{1}{2}}X^-_{-\frac{1}{2}}$};
			\node (35) at (4.5,0) [] {$X^+_{-\frac{1}{2}}X^-_{\frac{1}{2}}$};
			\node (36) at (7.5,0) [] {$X^-_{-\frac{1}{2}}X^-_{\frac{1}{2}}$};
			\node (41) at (-4.5,-3) [] {$X^+_{\frac{1}{2}}X^+_{-\frac{1}{2}}X^-_{-\frac{1}{2}}$};
			\node (42) at (-1.5,-3) [] {$X^+_{\frac{1}{2}}X^+_{-\frac{1}{2}}X^-_{\frac{1}{2}}$};
			\node (43) at (1.5,-3) [] {$X^+_{\frac{1}{2}}X^-_{-\frac{1}{2}}X^-_{\frac{1}{2}}$};
			\node (44) at (4.5,-3) [] {$X^+_{-\frac{1}{2}}X^-_{-\frac{1}{2}}X^-_{\frac{1}{2}}$};
			\node (5) at (0,-6) [] {$X^+_{\frac{1}{2}}X^+_{-\frac{1}{2}}X^-_{\frac{1}{2}}X^-_{-\frac{1}{2}}$};
			\draw[red] (1) -- (21);
			\draw[green] (1) -- (22);
			\draw[blue] (1) -- (23);
			\draw[mauve] (1) -- (24);
            \draw[green] (21) -- (31);
            \draw[blue] (21) -- (32);
            \draw[mauve] (21) -- (33);
            \draw[red] (22) -- (31);
            \draw[blue] (22) -- (34);
            \draw[mauve] (22) -- (35);
            \draw[red] (23) -- (32);
            \draw[green] (23) -- (34);
            \draw[mauve] (23) -- (36);
            \draw[red] (24) -- (33);
            \draw[green] (24) -- (35);
            \draw[blue] (24) -- (36);
            \draw[blue] (31) -- (41);
            \draw[mauve] (31) -- (42);
            \draw[green] (32) -- (41);
            \draw[mauve] (32) -- (43);
            \draw[green] (33) -- (42);
            \draw[blue] (33) -- (43);
            \draw[red] (34) -- (41);
            \draw[mauve] (34) -- (44);
            \draw[red] (35) -- (42);
            \draw[blue] (35) -- (44);
            \draw[red] (36) -- (43);
            \draw[green] (36) -- (44);
            \draw[green] (33) -- (42);
            \draw[mauve] (41) -- (5);
            \draw[blue] (42) -- (5);
            \draw[green] (43) -- (5);
            \draw[red] (44) -- (5);
            \node at (-7,6) {};
			\node[align=center] at (0,-7.25) {Loewy diagram for $Z(\gg_2)$};
		\end{tikzpicture}

Although the category of finite-dimensional $Z(\gg_N)$-modules is clear, the structure of infinite-dimensional modules is considerably more intricate. For instance, one can construct an infinite-dimensional $Z(\gg_2)$-module as follows:
\[
\cdots \xleftarrow{y} v_{n-2} \xrightarrow{x} v_{n-1} \xleftarrow{y} v_n \xrightarrow{x} v_{n+1} \xleftarrow{y} v_{n+2} \xrightarrow{x} \cdots,
\]
where the representation space is spanned by infinitely many vectors $v_i$ for $i\in \mathbb Z$. It follows that this module is indecomposable but not finitely generated.

{\noindent \it{Proof of Theorem \ref{thm:Loewy}.}}
For each node $X$ in the Loewy diagram of $F_2$,  we let $\langle X \rangle$ denote the corresponding $F_2$-module induced by the $Z(\gg_2)$-module $Z(\gg_2)X$ given by Corollary~\ref{thm:classification}. 
Let $P(V_{-2}(\mathfrak{sl}_2))$ be the projective cover of the vacuum module $V_{-2}(\mathfrak{sl}_2)$, which is 
isomorphic to $F_0(\langle 1 \rangle)$ according to Theorem \ref{thm:equivalence} and Remark \ref{rm:loewy}.
As the $\mathbb C^*$-degree of $G^\pm$ equals $\pm 1$, the corresponding $V_{-2}(\mathfrak {sl}_2)$-modules for $\langle G^+_{\pm \frac12}\rangle$ and $\langle G^-_{\pm \frac12} \rangle$  are given by  
$F_{-\lambda} (\langle G^+_{\pm \frac12}\rangle) \cong  \sigma^{1,\ast} ( F_0(\langle G^+_{\pm \frac12}\rangle))$ and $F_{\lambda}(\langle G^-_{\pm \frac12} \rangle)\cong \sigma^{-1,\ast}(\langle G^-_{\pm \frac12} \rangle)$, respectively.
As a consequence of  Theorem  \ref{thm:sflow} and Corollary \ref{thm:classification}, we obtain Theorem \ref{thm:Loewy}. \qed

\end{document}